\numberwithin{equation}{section}
\newtheorem{MainThm}{Theorem}
\theoremstyle{definition}
\newtheorem{definition}[equation]{Definition}
\newtheorem{definition-proposition}[equation]{Definition/Proposition}
\newtheorem{assumptions}[equation]{Assumption}
\newtheorem{observation}[equation]{Observation}
\theoremstyle{remark}
\newtheorem{remark}[equation]{Remark}
\newtheorem{example}[equation]{Example}
\theoremstyle{plain}
\newtheorem{theorem}[equation]{Theorem}
\newtheorem{lemma}[equation]{Lemma}
\newtheorem{proposition}[equation]{Proposition}
\newtheorem{corollary}[equation]{Corollary}
\newcommand{\bI}{\mathbb{I}}
\newcommand{\bC}{\mathbb{C}}
\newcommand{\bN}{\mathbb{N}}
\newcommand{\bR}{\mathbb{R}}
\newcommand{\bZ}{\mathbb{Z}}
\newcommand{\bH}{\mathbb{H}}
\newcommand{\cG}{\mathcal{G}}
\newcommand{\cS}{\mathcal{S}}
\newcommand{\cW}{\mathcal{W}}
\newcommand{\cH}{\mathcal{H}}
\newcommand{\cP}{\mathcal{P}}
\newcommand{\cU}{\mathcal{U}}
\newcommand{\fX}{\mathfrak{X}}
\newcommand{\fF}{\mathfrak{F}}
\newcommand{\fM}{\mathfrak{M}}
\newcommand{\fL}{\mathfrak{L}}
\newcommand{\Bott}{\beta}
\newcommand{\bott}{\operatorname{bott}}
\newcommand{\Cal}{\mathbf{Cal}}
\newcommand{\Lin}{\mathbf{Lin}}
\newcommand{\Fin}{\mathbf{Fin}}
\newcommand{\Cl}{\mathbf{Cl}}
\newcommand{\Diff}{\mathrm{Diff}}
\newcommand{\dom}{\mathrm{dom}}
\newcommand{\eps}{\epsilon}
\newcommand{\End}{\operatorname{End}}
\newcommand{\Fred}{\operatorname{Fred}}
\newcommand{\grotimes}{\hat{\otimes}}
\newcommand{\Hom}{\operatorname{Hom}}
\newcommand{\ind}{\operatorname{index}}
\newcommand{\inddiffH}{\operatorname{inddiff}_H}
\newcommand{\inddiffGL}{\operatorname{inddiff}_{GL}}
\newcommand{\id}{\operatorname{id}}
\newcommand{\Kom}{\mathbf{Kom}}
\newcommand{\map}{\operatorname{map}}
\newcommand{\mor}{\operatorname{Mor}}
\newcommand{\normalize}[1]{\frac{#1}{(1+{#1}^{2})^{1/2}}}
\newcommand{\pr}{\operatorname{pr}}
\newcommand{\pseudo}[1]{\Psi \mathrm{DO}^{#1}}
\newcommand{\pseudop}[2]{\Psi \mathrm{DO}_{#1}^{#2}}
\newcommand{\Pseudop}[2]{\overline{\Psi \mathrm{DO}}_{#1}^{#2}}
\newcommand{\pseudir}[1]{{\Psi \mathrm{Dir}}(#1)}
\newcommand{\Lopspace}{\mathrm{map}_{C^{\infty}} ((\mathbb{I}, \partial \mathbb{I}, -1 ); (\Psi \mathrm{Dir} (V), \Psi \mathrm{Dir} (V)^{\times}, B)) }
\newcommand{\spfl}{\mathrm{sf}}
\newcommand{\susp}{\operatorname{susp}}
\newcommand{\spec}{\operatorname{spec}}
\newcommand{\supp}{\operatorname{supp}}
\newcommand{\Sym}{\mathrm{Sym}}
\newcommand{\Symb}{\mathrm{Smb}}
\newcommand{\symb}{\operatorname{smb}}
\newcommand{\twomatrix}[4]{\begin{pmatrix} #1 & #2 \\ #3 & #4  \end{pmatrix}}
\newcommand{\cstar}{\mathrm{C}^{\ast}}
\newcommand{\Dir}{\slashed{\mathfrak{D}}}
\newcommand{\spinor}{\slashed{\mathfrak{S}}}
\newcommand{\Riem}{{\mathcal R}}
\newcommand{\gR}{\mathbf{R}}
\newcommand{\scal}{\mathrm{scal}}
\newcommand{\norm}[1]{\| #1 \|}
\newcommand{\KO}{KO}
\newcommand{\KK}{KK}
\newcommand{\vol}{\mathrm{vol}}
\title{The two definitions of the index difference}
\author{Johannes Ebert}
\address{Mathematisches Institut, Universit\"at M\"unster\\
Einsteinstra{\ss}e 62\\
48149 M\"unster\\
Bundesrepublik Deutschland}
\email{johannes.ebert@uni-muenster.de}
\date{\today}
\keywords{Fredholm model for K-theory, Bott periodicity, spectral flow, Dirac operator, positive scalar curvature}
\subjclass{19K56, 53C21, 53C27, 55N15, 58J30, 58J40}
\begin{document}

\begin{abstract}
Given two metrics of positive scalar curvature on a closed spin manifold, there is a secondary index invariant in real $K$-theory. There exist two definitions of this invariant, one of homotopical flavor, the other one defined by an index problem of Atiyah-Patodi-Singer type. We give a complete and detailed proof of the folklore result that both constructions yield the same answer. Moreover, we generalize this result to the case of two families of positive scalar curvature metrics, parametrized by a finite CW complex.
In essence, we prove a generalization of the classical ``spectral-flow-index theorem'' to the case of families of real operators.
\end{abstract}

\maketitle

\section{Introduction}

\subsection{Two secondary index invariants for positive scalar curvature metrics}

The \emph{space of metrics of positive scalar curvature} $\Riem^+ (M)$ on a closed spin manifold $M$ of dimension $d$ has attracted the attention of homotopically minded geometric topologists. One tool for its study comes from index theory. 
Given any Riemannian metric $g$ on $M$, one can consider the Atiyah-Singer-Dirac operator $\Dir_g$, which is an odd, $\Cl^{d,0}$-linear and symmetric elliptic operator that acts on the space $H=L^2 (M ; \spinor_M)$ of sections of the real spinor bundle $\spinor_M \to M$. Thus it yields a point in $\Fred^{d,0}(H)$, the space of self-adjoint odd $\Cl^{d,0}$-linear Fredholm operators. By a classical result of Atiyah-Singer and Karoubi, $\Fred^{d,0}(H)$ represents the real $K$-theory functor $\KO^{-d}$, and thus we get an index $\ind_{d,0} (\Dir_g) \in \KO^{-d}(*)$. If $g$ has positive scalar curvature, then $\ind_{d,0} \Dir_g=0$: the well-known argument using the Schr\"odinger-Lichnerowicz formula proves that $\Dir_g$ is invertible, which shows that at first glance, the index does not convey any information.
There are, however, useful \emph{secondary} index-theoretic invariants that one can attach not to single metric of positive scalar curvature, but to a \emph{pair} $(g_{-1},g_1)$ of such. There are two constructions of such invariants. What they have in common is that one starts by considering the family $g_t= \frac{1-t}{2} g_{-1}+\frac{1+t}{2}g_1$ of metrics, parametrized by $[-1,1]$. Of course, and this is the point, the metric $g_t$ typically does not have positive scalar curvature. 

For the \emph{first construction}, we consider the path $\Dir_{g_t}$ in the space $\Fred^{d,0}(H)$ (in this introduction, we are glossing over the detail that $\spinor_M$ and hence $H$ depends on $g$). This path begins and ends at an invertible operator, since $g_{\pm 1}$ has positive scalar curvature. As the space of invertible operators is contractible (Kuiper's theorem), the path contains the homotopical information of a point in the loop space $\Omega \Fred^{d,0}$. This space represents the functor $\KO^{-d-1}$, and in this way we obtain the first version of the \emph{index difference} $\inddiffH (g_{-1},g_1) \in \KO^{-d-1}(*)$. This viewpoint was introduced by Hitchin \cite{Hit}.

For the \emph{second construction}, we extend the metric $dt^2 + g_t$ on $M \times [-1,1]$ to a metric $h$ on the cylinder $M \times \bR$ (constant outside $[-1,1]$) and consider the Dirac operator $\Dir_h$ on $M \times \bR$ of this metric. As the dimension of $M \times \bR$ is $d+1$, this is a $\Cl^{d+1,0}$-linear operator. It defines a Fredholm operator on $\cH:=L^2 (M \times \bR; \spinor_{M \times \bR})$, and we can consider it as a point in $\Fred^{d+1,0}(\cH)$, giving another element $\inddiffGL \in \KO^{-d-1}(*)$ (one could view this an index of an Atiyah-Patodi-Singer boundary value problem as in \cite{APS1}, but we use a different setup). This viewpoint on the index difference is due to Gromov and Lawson \cite{GL}.

An obvious question is whether both constructions yield the same result and this question is most cleanly formulated in the family case, to which both constructions can be generalized. There are well-defined homotopy classes of maps

\begin{equation}
\begin{split}
\inddiffH: \Riem^+ (M) \times \Riem^+ (M) \to \Omega \Fred^{d,0} (H), \\
 \inddiffGL : \Riem^+ (M) \times \Riem^+ (M) \to \Fred^{d+1,0} (H).
\end{split}
\end{equation}
Both map the diagonal to the (contractible) space of invertible operators. The Bott periodicity map is a weak equivalence 
\[
\bott: \Fred^{d+1,0} \stackrel{\simeq}{\to} \Omega \Fred^{d,0}.
\]

\begin{MainThm}\label{mainresult-psc}
The two maps $\bott \circ \inddiffGL$ and $\inddiffH$ are weakly homotopic. In other words, if $X$ is a finite CW complex and $f: X \to \Riem^+ (M) \times \Riem^+ (M)$ is a map, then $\bott \circ \inddiffGL \circ f$ and $\inddiffH \circ f$ are homotopic. Moreover, if the subcomplex $Y \subset X$ is mapped to the diagonal, the homotopy can be chosen to be through invertible operators on $Y$.
\end{MainThm}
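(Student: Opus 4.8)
My plan is to route both maps through an abstract operator-theoretic model and then reduce the theorem to a families version of the spectral-flow index theorem for real, Clifford-linear operators.

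First I would eliminate the $g$-dependence of the spinor bundle $\spinor_{M,g}$ and of the Hilbert space $L^2(M;\spinor_{M,g})$. Fixing a reference metric, the canonical Bourguignon-Gauduchon isometries of the spinor bundles for varying metrics transport each $\Dir_g$ onto a single Hilbert space $H$, so that $g\mapsto\Dir_g$ becomes a continuous family of odd, self-adjoint, $\Cl^{d,0}$-linear elliptic operators of Dirac type on $H$, i.e. a section over $\Riem^+(M)$ of an appropriate bundle of such operators. After precomposing with $f$, everything takes place over the finite complex $X$, with $Y$ mapping to the invertible locus.

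Next I would identify the cylinder operator with a model operator. Given a pair $(g_{-1},g_1)$ and the interpolating family $g_t$ (extended constantly for $|t|\ge 1$), the product structure of the metric $dt^2+g_t$ on $M\times\bR$, via the generalised-cylinder formalism, writes the Dirac operator $\Dir_h$, after applying the isometry of the previous step fibrewise over $t$, in the form
\[
\mathbf{D}(A_\bullet)\;=\;e\,\partial_t + A_t \quad\text{on}\quad L^2(\bR;H),
\]
up to an explicitly bounded zeroth-order term $R_t$ supported in $[-1,1]$ (coming from the $t$-derivative of the isometry) and up to the sign conventions for the extra Clifford generator $e$ of $\Cl^{d+1,0}$; here $A_t:=\widehat{\Dir}_{g_t}$ is a path of self-adjoint $\Cl^{d,0}$-linear Fredholm operators that is constant and invertible for $|t|\ge 1$. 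I would then check that deleting $R_t$, rescaling the cylinder length, and contracting the path of metrics to the affine one $\tfrac{1-t}{2}g_{-1}+\tfrac{1+t}{2}g_1$ are homotopies through $\Cl^{d+1,0}$-linear Fredholm operators, stationary through invertible operators over $Y$. This presents $\inddiffGL\circ f$ as $\ind_{d+1,0}\circ\mathbf{D}$ applied to the path-valued map $t\mapsto A_t$, while $\inddiffH\circ f$ is visibly the "endpoint" map sending the family of paths $t\mapsto A_t$ to the loop obtained by concatenating with a fixed contraction of the common invertible endpoints.

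The core is therefore an abstract families spectral-flow index theorem: for a continuous family over a finite complex $X$ of paths $t\mapsto A_t^x$ of odd, self-adjoint, $\Cl^{d,0}$-linear Fredholm operators on $H$, constant and invertible outside $[-1,1]$, the map $x\mapsto\mathbf{D}(A_\bullet^x)\in\Fred^{d+1,0}(L^2(\bR;H))$ becomes, after applying $\bott$, homotopic to the loop-valued map described above, relative to $Y$ through invertibles. I would prove this in three moves: (i) uniform-in-$x$ elliptic estimates on the cylinder show $\mathbf{D}$ genuinely lands in $\Fred^{d+1,0}$ and depends continuously on $x$; (ii) a deformation of the whole family of paths --- using Kuiper's theorem together with the triviality of $\Cl^{d,0}$-linear Fredholm bundles --- pushes all spectral crossings into a fixed finite-dimensional $\Cl^{d,0}$-submodule $V\subset H$, away from which the $A_t^x$ stay invertible, so that on $V^{\perp}$ both constructions contribute nothing; (iii) on $V$ one is left with a bundle of harmonic-oscillator-type operators $e\partial_t + (\text{a }t\text{-linear Clifford endomorphism of }V)$, whose kernels and cokernels I can compute explicitly and match against the explicit formula for $\bott$ in the Atiyah-Singer-Karoubi Fredholm model of $\KO$.

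The main obstacle is step (ii) of that argument: the finite-dimensional reduction \emph{in families and rel $Y$}. One must deform the entire family of paths, keeping the endpoints invertible throughout and the homotopy stationary over $Y$, so that all spectral flow is concentrated in a fixed finite-rank Clifford-module sub-bundle, while retaining enough spectral control that the cylinder operator stays Fredholm along the deformation. The second delicate point is bookkeeping: making the finite-dimensional index computation reproduce the Bott periodicity isomorphism on the nose, rather than merely some isomorphism $\KO^{-d-1}\cong\KO^{-d-1}$; I would pin this down by evaluating both constructions on the standard generators of the relevant $\KO$-groups.
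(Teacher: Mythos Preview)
Your setup---trivialising the $g$-dependence of the spinor bundle and identifying the cylinder Dirac operator with $e\partial_t+A_t$ up to a removable zeroth-order error---matches what the paper does (via a ``gauging trick'' rather than Bourguignon--Gauduchon, but to the same effect), and your step~(i) is essentially the content of the paper's elliptic-regularity section. The divergence is in the reduction. The paper does \emph{not} attempt a direct finite-dimensional reduction of the family. Instead it packages paths of $\Cl^{p,q}$-linear pseudo-Dirac operators into a group $L^{p,q}_M(X,Y)$, proves that the loop map $\Lambda:L^{p,q}_M(X,Y)\to\Omega F^{p,q}(X,Y)$ is a \emph{bijection} (this is the analytical heart: one shows, following Boo{\ss}--Wojciechowski, that the space of invertible pseudo-Dirac operators with fixed leading symbol has the weak homotopy type of the invertibles in $B_0+\Kom^{p,q}(H)$, so that every loop in $\Fred^{p,q}$ lifts to a path of pseudo-Dirac operators), and then uses Morita equivalences, compatibility of both $\susp$ and $\Lambda$ with an internal Bott map on $L^{p,q}$, and $\KO^0$-linearity to reduce the entire statement to the single case $(p,q)=(0,1)$, $X=\ast$. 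That case is settled by an explicit harmonic-oscillator computation identical in spirit to your step~(iii). So what your approach would buy, if it worked, is avoidance of the pseudo-differential machinery; what the paper's approach buys is that the reduction to a point is purely formal once $\Lambda$ is known to be an isomorphism.

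Your step~(ii) is exactly the place the paper flags as the obstruction to a direct families version of Robbin--Salamon (the author writes that he ``failed with an attempt at a straightforward generalization of their argument to the family case''), and your sketch does not yet close it. You need a homotopy of the family of paths, \emph{stationary over $Y$ and through invertibles on $X\times\{\pm1\}$}, after which every $A_t^x$ preserves a fixed graded $\Cl^{d,0}$-submodule $V\subset H$ and is invertible on $V^\perp$. Invoking Kuiper and ``triviality of Fredholm bundles'' does not produce this: the standard finite-rank reduction (choose $V$ so that $P_{V^\perp}A_t^xP_{V^\perp}$ is invertible, then linearly kill the off-diagonal blocks) does \emph{not} preserve invertibility at the endpoints or over $Y$, so the relative condition is lost; and any preliminary move that puts the boundary operators into block form is no longer rel~$Y$. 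One can try to rescue this by arguing abstractly that every class in $\Omega F^{d,0}(X,Y)$ is represented by a finite-rank loop and that the suspension is well-defined on homotopy classes---but that amounts to proving that $\Lambda$ is surjective and that $\susp$ factors through it, which is precisely what the paper's pseudo-differential detour supplies. As written, (ii) is a genuine gap.
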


The restriction to finite CW pairs is out of convenience; it can probably be removed, but we opine that this is not worth the effort. Certainly, Theorem \ref{mainresult-psc} does not come as a surprise at all and in fact it has the status of a folklore result. However, we are not aware of a published adequate exposition. 
As indicated, the secondary index is an important tool to detect homotopy classes in the space $\Riem^+ (M)$, compare \cite{GL,Hit,HSS,CS}. The authors of these works have been careful to avoid any use of Theorem \ref{mainresult-psc}. In \cite{BERW}, a stronger detection theorem for $\pi_* (\Riem^+ (M))$ is proven, for manifolds of all dimensions $\dim (M) \geq 6$. In that paper, Theorem \ref{mainresult-psc} is used in an essential way to pass from $2n$-manifolds to $(2n+1)$-dimensional manifolds. Thus, filling this gap was motivated not by an encyclopedic striving for completeness, but by necessity. 

\subsection{Spectral-flow-index theorem}

Let us explain the strategy for the proof of Theorem \ref{mainresult-psc}, first under the assumption that $X=*$ and $d +1  \equiv 0 \pmod 8$. In this case, $\pi_0 (\Fred^{d+1,0}(H))$ is isomorphic to $\bZ$, detected by an ordinary Fredholm index. The space $\Fred^{d,0}(H)$ is homeomorphic to the space of self-adjoint Fredholm operators on a real Hilbert space. The fundamental group $\pi_1 (\Fred^{d,0}(H))$ is isomorphic to $\bZ$, and it is detected by the ``spectral flow'' $\spfl$: if $A$ is a family of operators such that $A(\pm 1 )$ invertible, then $\spfl (A)$ is the number of eigenvalues of $A(t)$ that cross $0$, counted with multiplicities. This concept was introduced by Atiyah-Patodi-Singer \cite[\S 7]{APS3}. To such a family, one can associate the operator $D_A:= \partial_t + A(t)$, acting on $L^2 (\bR;H)$. This is Fredholm, and so has an index. The ``spectral-flow index theorem'' states that $\spfl (A) = \pm \ind (D_A)$ (for the moment, we ignore the sign). The basic idea why Theorem \ref{mainresult-psc} is expected to be true is that Hitchin's index difference is given by the spectral flow of the family $A(t)=\Dir_{g_t}$, while Gromov-Lawson's index difference is the index of the operator $D_{A}$. 

There are many approaches to spectral-flow-index theorems in the literature. 
Atiyah-Patodi-Singer gave a proof for a special case \cite[Theorem 7.4]{APS3}: $A(t)$ has to be an elliptic operator on a closed manifold, and the crucial assumption is that $A(1)=A(-1)$. This is important, because their proof is by gluing the ends together in order to reduce to an index problem on $M \times S^1$, which can be solved by the usual Atiyah-Singer index theorem. However, this assumption is not satisfied in our case. 
One obvious first idea for such a reduction would be to use that $A(1)$ and $A(-1)$ are homotopic through invertible operators, by Kuiper's theorem. Composing the family $A(t)$ with such a homotopy would result in a closed family $A'$, and the operator $D_A'$ would have the same index. But the index problem for the new family still cannot be reduced to a problem on $M \times S^1$, for a very fundamental reason: for that to work the homotopy must be through \emph{pseudo-differential operators}. Even though $A(1)$ and $A(-1)$ are homotopic through elliptic differential operators and through invertible operators on the Hilbert space, we cannot fulfill both requirements at once! In fact, our proof will clearly show that this is the essential information captured by the spectral flow. 

Before we describe our argument, let us discuss several other approaches that appeared in the literature. Bunke \cite{Bu93} considers the case when $A(t)$ is a family of differential operators with the same symbol. He reduces the problem to the closed case; but his answer is in terms of cohomology, and he does not treat the parametrized situation. Translating his argument to real $K$-theory would, as far as we can see, not have resulted in a shorter proof of Theorem \ref{mainresult-psc}. 
Robbin and Salamon \cite{RoSa} worked in an abstract functional analytic setting, ignoring that the operators are pseudo-differential. For the case $X=*$, $d+1 \equiv 0 \pmod 8$, they gave a detailed proof in this abstract setting. We will use a special case of their result in our proof; but we failed with an attempt at a straightforward generalization of their argument to the family case.
Another proof in the framework of $\KK$-theory is due to Kaad and Lesch \cite{KL}, again the details are only for the complex case and $X=*$; our knowledge of Kasparov theory does not suffice to carry out the generalization to the case we need.

\subsection{Overview of the paper}

Let us now give a description of what we actually do. Chapter \ref{k-survey} surveys background material on Clifford algebras and $K$-Theory. In section \ref{clifford-survey}, we collect the conventions on Clifford algebras that we use (we use all Clifford algebras $\Cl^{p,q}$ to make the linear algebra work better). Section \ref{fredholmsec1} recalls the classical Fredholm model for real $K$-Theory. We have to generalize the Fredholm model in such a way that a Hilbert bundle, together with a Fredholm family represents an element in $K$-Theory. There are well-known difficulties with the structural group and the continuity condition on a Fredholm family. Therefore, we spend some pages explaining these conditions (section \ref{hilbertbundles}). Section \ref{fredholmsec2} discusses the generalized Fredholm model; the proof that the construction gives the correct answer is deferred to the appendix \ref{appendix}.
A side-purpose of chapter \ref{k-survey} is to close a gap in the literature. Classically, the family index theorem is only formulated for compact base spaces, and even a definition of the family index over a noncompact base does not seem to be discussed properly in the literature. In \cite{BERW}, we need to consider family indices over not even locally compact bases, and chapter \ref{k-survey}, together with the appendix, was partially written with that goal in mind.

The goal of chapter \ref{analysis} is to give the rigorous definition of the secondary index invariants and the formulation of the main result of this paper (Theorem \ref{main-index-theorem}). Section \ref{elliptic-preliminaries} collects some facts on elliptic regularity for manifolds with cylindrical ends; the key result is Proposition \ref{fredholmproperty:familywise}. In order to cover the other index-theoretic arguments that appear in \cite{BERW}, we prove more general versions than necessary for Theorem \ref{mainresult-psc}. 
The overall structure of the proof of Theorem \ref{mainresult-psc} forces us to leave the realm of Dirac operators; we have to work with pseudo-differential operators that have the leading symbol of a Dirac operator (we call them ``pseudo-Dirac operators''). The general setting for our index theorem are families $A(t)$ of $\Cl^{p,q}$-linear pseudo-Dirac operators on a closed manifolds. Given such a family, we get a new operator $D_A$ on the manifold $M \times \bR$, which is $\Cl^{p+1,q}$-linear (called \emph{suspension}).
While the family $A(t)$ yields $\inddiffH$, the operator $D_A$ corresponds to $\inddiffGL$. We organize the curves of Dirac operators on $M$, parametrized by a space $X$, in a suitable $K$-group that we call $L^{p,q}(X)$. The two constructions (family index of $A(t)$ and family index of $D_A$) give maps $L^{p,q}(X) \to \KO^{q-p-1}(X)$ and our main result (Theorem \ref{main-index-theorem}) says that both maps are equal.
In section \ref{defnsinddiff}, we deduce Theorem \ref{mainresult-psc} from Theorem \ref{main-index-theorem}.

Chapter \ref{ktheory} contains the proof of Theorem \ref{main-index-theorem} and we follow a common strategy for proving index theorems.  The crucial analytical ingredient for the proof of Theorem \ref{main-index-theorem} is 
Proposition \ref{thm:invertiblepseudos} which states that the space of curves of $\Cl^{p,q}$-linear pseudo-Dirac operators $A(t)$, $A(\pm 1)$ invertible, is rich enough to realize $\Omega \Fred^{p,q}$. This is inspired by a theorem of Boo{\ss}-Wojciechowski \cite{BW} and would be false if we tried to use differential operators. In section \ref{formal-structures}, we use Proposition \ref{thm:invertiblepseudos} and formal properties of $K$-theory to reduce everything to the special case $X=*$ and  $(p,q)=(0,1)$, which is the case that was dealt with by Robbin and Salamon (in fact, we use an explicit index computation instead).

\subsection*{Acknowledgements}

I am grateful to Boris Botvinnik and Oscar Randal-Williams for the exciting collaboration (of which the present paper is an outsourced part); moreover I want to thank Boris and his wife Irina for the warm welcome in their home.
Thomas Schick made several useful comments on this paper, but in particular I want to thank the anonymous referee for reading the paper in an extremely careful way.

\section{Preliminaries on \texorpdfstring{$K$}--Theory and Clifford algebras}\label{k-survey}

\subsection{Clifford algebra}\label{clifford-survey}

Throughout the paper, we work over the real numbers; the proofs can easily be ``complexified''.
Without any further mentioning, we assume (or claim implicitly) that all Hilbert spaces are \emph{separable}.

\begin{definition}
Let $V$ and $W$ be two finite-dimensional euclidean vector spaces and let $H$ be a Hilbert space. A \emph{$\Cl^{V,W}$-structure} on $H$ is a pair $(\iota,c)$, where $\iota$ is a self-adjoint involution of $H$ and $c: V \oplus W \to \Lin (H)$ is a linear map to the space of bounded operators on $H$ such that 
\[
c(v,w) \iota = -\iota c(v,w), \; c(v,w)^* = c (-v,w), \; c(v,w)^2=-|v|^2 + |w|^2 
\]
for all $v \in V$, $w \in W$. A \emph{$\Cl^{V,W}$-Hilbert space} is a Hilbert space, equipped with a $\Cl^{V,W}$-structure. The \emph{opposite} $\Cl^{V,W}$-Hilbert space is $(H,\iota,c)^{op}:=(H,-\iota,-c)$ and is shortly denoted by $H^{op}$.
A bounded linear operator $F: (H,\iota,c) \to (H',\iota',c')$ of $\Cl^{V,W}$-Hilbert spaces will be called \emph{Clifford-linear} if $Fc(x)=c(x)F$ holds all $x \in V \oplus W$. A Clifford-linear bounded operator $F$ is \emph{even} if $F \iota = \iota' F$, and \emph{odd} if $F \iota = -\iota' F$.
\end{definition}

If there is no risk of confusion, we write $x$ for $c(x)$. Of particular interest to us is the case $V=\bR^p$, $W=\bR^q$, both with the standard scalar product. In this case, we write $\Cl^{p,q}$ instead of $\Cl^{\bR^p, \bR^q}$. A $\Cl^{p,q}$-structure on $H$ is given by orthogonal automorphisms $\iota,e_1,\ldots,e_{p}, \eps_1, \ldots , \eps_q$ of $H$, satisfying the relations 
\begin{equation}\label{clifford-relations}
\iota^2 =1, \; e_i \eps_j +  \eps_j e_i = e_i \iota + \iota e_i = \eps_j \iota +   \iota \eps_j =0, \;-(e_i e_j + e_j e_i )= \eps_i \eps_j + \eps_j \eps_i =2\delta_{ij}.
\end{equation}

There are various functors between the categories of $\Cl^{V,W}$-Hilbert spaces for different values of $(V,W)$, the classical \emph{Morita equivalences}. 
If $(H_0,c_0,\iota_0)$ is a $\Cl^{V,W}$-Hilbert space, we obtain a $\Cl^{V\oplus \bR,W \oplus \bR}$-Hilbert space $(H,\iota,c)$, $H:=H_0 \oplus H_0$ and
\[
c:=\twomatrix{c_0}{}{}{-c_0}, \; \iota:=\twomatrix{\iota_0}{}{}{-\iota_0}, \; c(e):= \twomatrix{}{-1}{1}{}, \; c(\eps):=\twomatrix{}{1}{1}{}
\]
(here $e$ and $\eps$ are the standard basis vectors of $\bR\oplus \bR$). If $F_0$ is a Clifford-linear endomorphism of $H_0$, then $F = \twomatrix{F_0}{}{}{F_0}$ is a Clifford-linear endomorphism of $H $, and if $F_0$ is even or odd, then so is $F$. Vice versa, if $H$ is a $\Cl^{V\oplus \bR,W \oplus \bR}$-Hilbert space with $e$ and $\eps$ being the actions of the basis vectors in the $\bR$-summands, consider $H_0:=\ker(\eps e-1)$; this inherits a $\Cl^{V,W}$-structure from the given one on $H$. Clifford-linear endomorphisms of $H$ restricts to Clifford-linear endomorphisms of $H_0$, and if $F$ is even or odd, then so is $F|_{H_0}$. Both procedures are mutually inverse.
In a similar fashion, $\Cl^{V \oplus \bR^4,W}$-Hilbert spaces and $\Cl^{V,W \oplus \bR^4}$-Hilbert spaces are equivalent. Let $H$ be a $\Cl^{V \oplus \bR^4,W}$-Hilbert space and put $\eta:= e_{1} \cdots e_{4}$, so that $\eta^2 =1$. Then we obtain a $\Cl^{V ,W\oplus \bR^4}$-Hilbert space $(H,\iota,c')$; $c'|_{V\oplus W} = c|_{V \oplus W}$, $\eps_{i}:= \eta e_{i}$ for $i=1,\ldots,4$.
Clifford-linear, even (or odd) endomorphisms are preserved under this procedure. By a similar recipe one transforms $\Cl^{V , W \oplus \bR^4}$-Hilbert spaces back into $\Cl^{V \oplus \bR^4, W }$-Hilbert spaces. Combining both types of equivalences, one gets equivalences between $\Cl^{V,W}$-, $\Cl^{V \oplus \bR^8,W}$- and $\Cl^{V,W \oplus \bR^8}$-Hilbert spaces.
All these definitions and constructions generalize without effort to nontrivial Riemannian vector bundles $V$, $W$ and Hilbert bundles $H$. 

The structure theory of $\Cl^{p,q}$-Hilbert spaces is well-known \cite[\S I.5]{LM}. For brevity, we call finite-dimensional $\Cl^{p,q}$-Hilbert spaces just \emph{$\Cl^{p,q}$-modules}. Each $\Cl^{p,q}$-Hilbert space decomposes into a Hilbert sum of (finite-dimensional) irreducible ones. If $p-q \not \equiv 0 \bmod 4$, then there is exactly one irreducible $\Cl^{p,q}$-module, up to isomorphism. If $p-q \equiv 0 \pmod 4$, there are exactly two irreducible $\Cl^{p,q}$-modules, up to isomorphism. These are mutually opposite and distinguished by their \emph{chirality}: consider the operator $\omega=\omega_{p,q} := \iota  \eps_1 \cdots \eps_q\cdot e_1 \cdots e_p$. If $p-q \equiv 0 \pmod 4$, then $\omega$ is $\Cl^{p,q}$-linear, even and satisfies $\omega^2=1$. In an irreducible $\Cl^{p,q}$-module, $\omega$ must act by $\pm 1$ (Schur's lemma), and this sign is the chirality.

\begin{definition}
A $\Cl^{p,q}$-Hilbert space $H$ is called \emph{ample} if it contains each irreducible $\Cl^{p,q}$-module with infinite multiplicity.
\end{definition}

Note that two ample $\Cl^{p,q}$-Hilbert spaces are isomorphic. If $H$ is an ample $\Cl^{p,q}$-Hilbert space, $s \geq p$, $t \geq q$, one can extend the $\Cl^{p,q}$-structure on $H$ to an ample $\Cl^{s,t}$-structure. There are strong homotopy theoretic versions of these statements. Let $H$ be a real Hilbert space and let $\cS^{p,q}(H)$ be the set of ample $\Cl^{p,q}$-structures; a point in $\cS^{p,q}(H)$ is given by $p+q+1$ linear isometries and we topologize $\cS^{p,q}(H)$ as a subspace of $\Lin (H)^{p+q+1}$, equipped with the norm topology. If $H$ is understood, we write $\cS^{p,q} := \cS^{p,q}(H)$. The following result is well-hidden in \cite{AS69} and \cite{Karoubi-paper}, and we make the proof explicit.

\begin{lemma}\label{kuipertheoremb}
If $H$ is infinite dimensional, the space $\cS^{p,q}=\cS^{p,q} (H)$ is contractible. Moreover, if $s \geq p$, $t \geq q$, then the forgetful map $\cS^{s,t} \to \cS^{p,q}$ is a Serre fibration with contractible fibers.
\end{lemma}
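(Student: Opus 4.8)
The plan is to build the contraction of $\cS^{p,q}(H)$ out of the classical observation that the unitary (here: orthogonal) group of an infinite-dimensional Hilbert space is contractible (Kuiper's theorem), combined with a stabilization trick that reduces the existence of a path of ample $\Cl^{p,q}$-structures to a single application of Kuiper. First I would fix a reference ample $\Cl^{p,q}$-structure $(\iota_0, c_0)$ on $H$. Given an arbitrary ample structure $(\iota,c)$, both $\Cl^{p,q}$-Hilbert spaces $H$ are ample, hence isomorphic by an even Clifford-linear orthogonal isomorphism $g \in O(H)$, i.e. $g$ conjugates $(\iota,c)$ to $(\iota_0,c_0)$. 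The subtlety is that $g$ is not canonical and $g \mapsto (\iota,c)$ is not obviously a fibration; the standard way around this is the Morita/stabilization move already recalled in the excerpt. Tensoring with a fixed infinite-dimensional Hilbert space (equivalently, replacing $H$ by $H \otimes \ell^2$, which is again $H$), one can arrange that the two structures $(\iota,c)$ and $(\iota_0,c_0)$ become not merely isomorphic but \emph{conjugate by a path}: the group $O^{p,q}(H)$ of even Clifford-linear orthogonal automorphisms of an ample $\Cl^{p,q}$-Hilbert space is, by the standard Clifford-module bookkeeping, itself a restricted/general orthogonal group of an infinite-dimensional Hilbert space (the multiplicity space), hence contractible by Kuiper. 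Using this, I would write down an explicit contraction: choose continuously (in $(\iota,c)$) a conjugating element — which is possible precisely because the relevant bundle of conjugators is a principal $O^{p,q}(H)$-bundle with contractible structure group, so it admits a section — and then contract $O^{p,q}(H)$ to the identity.

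Concretely the steps are: (1) Identify the space of ample $\Cl^{p,q}$-structures with the homogeneous space $O(H)/O^{p,q}(H)$, where $O^{p,q}(H)$ is the stabilizer of $(\iota_0,c_0)$; here one must check that the orbit map $O(H) \to \cS^{p,q}(H)$ is open/admits local sections, which follows from the polar-decomposition-type argument that near $(\iota_0,c_0)$ one can produce a conjugator depending continuously on the structure (average $\iota$ against $\iota_0$, apply functional calculus, iterate over the finitely many Clifford generators). (2) Observe $O^{p,q}(H) \cong O(H_{\mathrm{mult}})$ for an infinite-dimensional multiplicity Hilbert space $H_{\mathrm{mult}}$, using the structure theory of $\Cl^{p,q}$-modules recalled above, so $O^{p,q}(H)$ is contractible by Kuiper. (3) Likewise $O(H)$ is contractible by Kuiper. (4) Conclude from the fibration $O^{p,q}(H) \to O(H) \to \cS^{p,q}(H)$ (fibration by step (1)) and the long exact sequence of homotopy groups, plus a connectivity/$\pi_0$ check, that $\cS^{p,q}(H)$ is weakly contractible; since it is (an open subset of) a metrizable space with the homotopy type of a CW complex, weakly contractible gives contractible. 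For the second assertion, the forgetful map $\cS^{s,t}(H) \to \cS^{p,q}(H)$ is, after the same identifications, the map $O(H)/O^{s,t}(H) \to O(H)/O^{p,q}(H)$ induced by the inclusion $O^{s,t}(H) \hookrightarrow O^{p,q}(H)$ (an extension of an ample $\Cl^{p,q}$-structure to an ample $\Cl^{s,t}$-structure always exists, by the remark preceding the lemma, so the map is surjective); its fibre over a point is $O^{p,q}(H)/O^{s,t}(H) = \cS^{s-p,\,t-q}(\text{multiplicity space})$, again a space of ample Clifford structures on an infinite-dimensional Hilbert space, hence contractible by the first part. Local triviality (Serre-fibration property) of a homogeneous-space projection $G/K' \to G/K$ with $K' \subset K$ closed, $G = O(H)$, again reduces to the existence of continuous local conjugators, i.e. to the same polar-decomposition argument as in step (1).

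The main obstacle I expect is step (1) — making the choice of conjugating operator \emph{continuous} in the structure, equivalently proving that the orbit map $O(H) \to \cS^{p,q}(H)$ is a locally trivial bundle (or at least a fibration). This is exactly the "well-hidden" point alluded to in the text: one has to run a quantitative Gram–Schmidt/polar-decomposition argument, generator by generator over the $p+q+1$ Clifford generators, producing from a structure $(\iota,c)$ close to $(\iota_0,c_0)$ an orthogonal operator $U(\iota,c)$, depending norm-continuously on $(\iota,c)$ and equal to the identity at $(\iota_0,c_0)$, that intertwines the two structures; combined with transitivity of $O(H)$ this gives local sections everywhere. Everything else is then formal: Kuiper's theorem for the various orthogonal groups, the structure theory of $\Cl^{p,q}$-modules to identify $O^{p,q}(H)$ with an honest infinite-dimensional orthogonal group, and the long exact homotopy sequence together with the standard fact that a weakly contractible space having the homotopy type of a CW complex is contractible. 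I would present the details of step (1) carefully and treat the rest briskly, citing \cite{AS69} and \cite{Karoubi-paper} for the Clifford bookkeeping.
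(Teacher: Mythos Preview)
Your proposal is correct and follows essentially the same route as the paper: identify $\cS^{p,q}(H)$ with the homogeneous space $U(H)/U_{p,q}(H)$, invoke Kuiper's theorem for both groups (the paper notes that $U_{p,q}$ is a product of isometry groups over $\bR$, $\bC$, or $\bH$, which refines your ``$O(H_{\mathrm{mult}})$'' description), and read off the forgetful map as $U/U_{s,t}\to U/U_{p,q}$ with fibre $U_{p,q}/U_{s,t}$.

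The one genuine difference is in your step~(1), the local-section argument for $U(H)\to\cS^{p,q}(H)$. You propose an iterative polar-decomposition/functional-calculus construction, straightening one Clifford generator at a time. The paper instead observes that a $\Cl^{p,q}$-structure is the same as an orthogonal representation of the finite group $G$ generated by the symbols $\iota,e_i,\eps_j$ subject to the Clifford relations, and then proves a clean general lemma: for any finite group $G$ and any fixed isomorphism type $V\otimes\ell^2$, the orbit map $U(H)\to\{\text{$G$-representations on $H$ of that type}\}$ admits local sections, by trivializing the Hilbert bundle $\sigma\mapsto\Hom_G(V,H_\sigma)$ near a basepoint. Your approach should work, but the inductive straightening has to be done carefully so that fixing the $k$th generator does not disturb the first $k-1$; the paper's finite-group formulation sidesteps this bookkeeping entirely and is worth knowing.
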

\begin{proof}
Fix $c_{s,t}\in\cS^{s,t}$ and let $c_{p,q}\in\cS^{p,q}$ be obtained by restriction. Let $U=U(H)$ be the group of isometries of $H$, equipped with the norm topology. This group acts by conjugation on $\cS^{s,t}$, and the action is transitive, by the structure theory of Clifford modules. Let $U_{p,q} \subset U$ be the stabilizer of $c_{p,q}$. This is the group of all $\Cl^{p,q}$-linear and even isometries.
By Kuiper's theorem \cite{Kui}, $U \simeq *$. By Morita equivalences and ampleness, $U_{p,q}$ is homeomorphic to either the group of isometries of an infinite-dimensional Hilbert space over $\bR, \bC$ or $\bH$ or a product of such. Thus $U_{p,q} \simeq *$ by \cite{Kui} as well.
The space $\cS^{p,q}$ is the homogeneous space $U/U_{p,q}$ and so is contractible. The forgetful map can be identified with the $U_{p,q}/U_{s,t}$-bundle $U/U_{s,t} \to U/U_{p,q}$ and so the proof is complete.
To make this argument valid, it remains to be shown that $U \to \cS^{p,q}$ has local sections (so that $U \to \cS^{p,q}$ is a $U_{p,q}$-principal bundle).
For this, use that a $\Cl^{p,q}$-structure can be viewed as an orthogonal representation of the finite group $G$ generated by symbols $e_i, \eps_j, \iota$, subject to the Clifford relations and invoke the following general lemma (take $V$ to be the sum of all irreducible $\Cl^{p,q}$-modules).
\end{proof}

\begin{lemma}
Let $G$ be a finite group and let $H$ be a Hilbert space. Let $\rho: G \to U(H)$ be a unitary representation. Let $X \subset \Hom (G,U(H))$ be the space of representations $\sigma$ such that $H_{\sigma}$  (i.e., $H$ equipped with the $G$-action induced by $\sigma$) is $G$-isomorphic to $H_{\rho}$. Endow $X$ with the norm topology, as a subspace of $U(H)^G$. Then the map $\pi: U(H) \to X$, $u \mapsto u \rho u^{-1}$, has local sections.
\end{lemma}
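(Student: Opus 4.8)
The plan is to exhibit the local section via the standard averaging/projection trick for representations of a finite (in particular, compact) group on a Hilbert space. Fix $\rho \in X$. For a representation $\sigma$ near $\rho$ in the norm topology on $U^G$, the idea is to build an intertwiner $T_\sigma: H_\rho \to H_\sigma$ depending continuously on $\sigma$, with $T_\rho = \mathrm{id}$, and then polar-decompose $T_\sigma$ to extract a $G$-equivariant isometry, which will be the value $s(\sigma) \in U$ of the section (after checking $s(\sigma)\rho s(\sigma)^{-1} = \sigma$).

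The construction of $T_\sigma$ goes as follows. Set $T_\sigma := \frac{1}{|G|} \sum_{g \in G} \sigma(g) \circ \rho(g)^{-1}$, an element of $\Lin(H)$. A direct computation gives $\sigma(h) T_\sigma = T_\sigma \rho(h)$ for all $h \in G$, so $T_\sigma$ is a $G$-map $H_\rho \to H_\sigma$; moreover $T_\sigma$ depends continuously (indeed affine-linearly) on $\sigma$, and $T_\rho = \mathrm{id}$. Since invertibility is an open condition in the norm topology, there is a norm-neighborhood $\cU$ of $\rho$ in $X$ on which $T_\sigma$ is invertible. On $\cU$, form the polar decomposition $T_\sigma = u_\sigma |T_\sigma|$ with $|T_\sigma| = (T_\sigma^* T_\sigma)^{1/2}$ positive and invertible and $u_\sigma \in U$; both factors depend continuously on $\sigma$ (continuity of the functional calculus $A \mapsto A^{1/2}$ on the positive invertibles, and of inversion). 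Because $\rho(g)$ and $\sigma(g)$ are unitary, one has $T_\sigma^* = \frac{1}{|G|}\sum_g \rho(g)\sigma(g)^{-1}$, and conjugating $T_\sigma^* T_\sigma$ by $\rho(h)$ shows $\rho(h)^{-1} (T_\sigma^* T_\sigma) \rho(h) = T_\sigma^* T_\sigma$; hence $|T_\sigma|$ is $\rho$-equivariant, and therefore $u_\sigma = T_\sigma |T_\sigma|^{-1}$ is an equivariant isometry from $H_\rho$ to $H_\sigma$, i.e. $\sigma(h) u_\sigma = u_\sigma \rho(h)$ for all $h$. This rearranges to $u_\sigma \rho(h) u_\sigma^{-1} = \sigma(h)$, i.e. $\pi(u_\sigma) = \sigma$, so $\sigma \mapsto u_\sigma$ is the desired local section on $\cU$, with $u_\rho = \mathrm{id}$.

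One point still to be addressed is why such a section exists near an \emph{arbitrary} $\rho \in X$ rather than just near the base point, but this is automatic: $X$ carries a transitive action of $U$ by the very definition of $X$ (all $\sigma \in X$ are $G$-isomorphic to $V \otimes \ell^2$, hence to each other), and a local section near one point can be translated by this action to a local section near any other; alternatively the argument above works verbatim at any $\rho$. The main (and essentially only) technical obstacle is the continuity of the polar decomposition: one must stay within the open set of invertible operators so that $|T_\sigma|$ is bounded below, uniformly on a small enough neighborhood, making $A \mapsto A^{1/2}$ and $A \mapsto A^{-1}$ norm-continuous there; everything else is the elementary averaging computation. Applying this lemma with $G$ the finite group generated by $e_i, \eps_j, \iota$ subject to the Clifford relations and $V$ the sum of all irreducible $\Cl^{p,q}$-modules yields the local sections needed to complete the proof of Lemma \ref{kuipertheoremb}.
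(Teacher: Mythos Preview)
Your proof is correct and takes a genuinely different route from the paper. The paper builds the Hilbert bundle $W \subset X \times \Lin(V,H)$ with fibre $\Hom_G(V,H_\sigma)$ over $\sigma$, observes that it is the image of a norm-continuous projector and hence locally trivial with structure group $U(\ell^2)$, and then a local frame $s(\sigma):\ell^2 \cong \Hom_G(V,H_\sigma)$ induces the $G$-isometry $h_\sigma: V\otimes\ell^2 \cong H_\sigma$, with $h_\sigma h_\rho^{-1}$ as the section. Your argument instead writes down an explicit intertwiner $T_\sigma = |G|^{-1}\sum_g \sigma(g)\rho(g)^{-1}$ by averaging and extracts the unitary part via polar decomposition. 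Your approach is more elementary and self-contained (no bundle theory or local triviality results are invoked), and it gives the section by a closed formula; the paper's approach is more structural and makes the role of $V$ explicit, which is what connects the lemma back to the ampleness hypothesis in Lemma~\ref{kuipertheoremb}. Either way the content is the same: near $\rho$ one has a continuously varying $G$-equivariant isomorphism $H_\rho \to H_\sigma$. Your final paragraph is correct but superfluous, since your construction already works at an arbitrary basepoint $\rho$.
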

\begin{proof}
(This proof was suggested to us by the referee) We first construct the local section around the basepoint $\rho$. 
We need to find a neighborhood $Y \subset X$ of $\rho$ and a map $Y \to U(H)$, $\sigma \mapsto B_{\sigma}$ such that $B_{\sigma}: H_{\rho} \to H_{\sigma}$ is $G$-equivariant.
For each $\sigma \in X$, define the bounded operator
\[
A_{\sigma}:= \frac{1}{|G|} \sum_{g \in G} \sigma (g ) \rho (g^{-1}) \in \Lin (H).
\]
It is clear that $A_{\rho}=1$ and that $A_{\sigma} \circ \rho (h) = \sigma (h) \circ A_{\sigma}$.
Moreover 
\[
\norm{A_{\sigma_0} - A_{\sigma_1}} \leq \frac{1}{|G|} \sum_{g \in G} \norm{\sigma_0 (g) - \sigma_1 (g)}
\]
so that $\sigma \mapsto A_{\sigma}$ is continuous. In particular, $A_{\sigma}$ is invertible if $\sigma$ is close to $\rho$. Now let $A_{\sigma} = B_{\sigma}\sqrt{A_{\sigma}^* A_{\sigma}}$ be the polar decomposition, with $B_{\sigma}$ unitary. The operator $\sqrt{A_{\sigma}^* A_{\sigma}}$ is equivariant as a map $H_{\rho} \to H_{\rho}$, and so $B_{\sigma}$ is equivariant as a map $H_{\rho} \to H_{\sigma}$, as claimed.

Now let $\tau \in X$ be arbitrary. Since $\pi$ is surjective, there is a unitary $u$ with $\tau = u \rho u^{-1}$. A section to $\pi$ near $\tau$ is given by $\sigma \mapsto u B_{u^{-1} \sigma u}$.
\end{proof}

\subsection{The Fredholm model for \texorpdfstring{$K$}--Theory, version 1}\label{fredholmsec1}

\begin{definition}
For each Hilbert space $H$, $\Fred(H)$ denotes the space of all bounded Fredholm operators $F: H \to H$, equipped with the operator norm topology.
Let $H$ be a $\Cl^{p,q}$-Hilbert space. A \emph{$\Cl^{p,q}$-Fredholm operator} on $H$ is a bounded, $\Cl^{p,q}$-linear, self-adjoint and odd Fredholm operator $F: H \to H$.
If $H$ is ample and $p-q \not \equiv -1 \pmod 4$, let $\Fred^{p,q}(H)$ be the space of all $\Cl^{p,q}$-Fredholm operators on $H$, equipped with the operator norm topology. If $p-q \equiv -1 \pmod 4$, then $\Fred^{p,q}(H)$ is the space of all such Fredholm operators $F$ with the property that the (self-adjoint) operator $\omega_{p,q} F \iota$ is neither essentially positive nor essentially negative.
The subspace of invertible elements is denoted $\cG^{p,q} (H) \subset \Fred^{p,q}(H)$. 
\end{definition}

The relevance of the condition in the case $p-q \equiv -1 \pmod 4$ is the following. One restricts $F$ to an invertible operator $F_0$ on $\ker (F)^{\bot}$. By a spectral deformation, $F_0$ is homotopic to an involution $f$. The datum $(\iota, e_i,f\iota , \eps_j)$ forms a $\Cl^{p+1,q}$-structure on $\ker (F)^{\bot}$, and the condition is that this structure is ample.
The following is a classical result due to Atiyah-Singer and Karoubi.

\begin{theorem}\label{atiyah-singer-karoubi1}\cite{AS69,Karoubi-paper}
If $H$ is an ample $\Cl^{p,q}$-Hilbert space, then $\Fred^{p,q} (H)$ is a representing space for the functor $\KO^{q-p}$.
\end{theorem}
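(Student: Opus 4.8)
The plan is to establish this (following Atiyah--Singer \cite{AS69} and Karoubi \cite{Karoubi-paper}) in three stages: first cut down the number of index pairs by Morita equivalence, then exhibit the spaces $\Fred^{p,q}(H)$ as the terms of an $\Omega$-spectrum, and finally identify that spectrum with $\KO$.

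\emph{Reduction.} Any two ample $\Cl^{p,q}$-Hilbert spaces are isomorphic, so $\Fred^{p,q}(H)$ depends, up to homeomorphism, only on $(p,q)$. The Morita equivalences of \S\ref{clifford-survey} are norm-continuous and natural, and one checks that they carry $\Cl^{p,q}$-Fredholm operators to $\Cl^{p',q'}$-Fredholm operators and match the essential-(in)definiteness condition (relevant when $p-q\equiv-1\pmod4$) with its counterpart; this gives homeomorphisms $\Fred^{p+1,q+1}(H)\cong\Fred^{p,q}(H')$ and $\Fred^{p+4,q}(H)\cong\Fred^{p,q+4}(H)$, hence an $8$-periodicity compatible with $\KO^{q-p}=\KO^{q-p+8}$. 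In particular only finitely many $(p,q)$ need to be treated.

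\emph{The $\Omega$-spectrum structure} is the substantial point. Let $H$ be an ample $\Cl^{p+1,q}$-Hilbert space, with $e_{p+1}$ the action of the extra (negatively squaring) Clifford generator. For $F\in\Fred^{p+1,q}(H)$ consider
\[
G_F(\theta):=\cos(\theta)\,F+\sin(\theta)\,e_{p+1}\iota,\qquad\theta\in[0,\pi].
\]
Using $Fe_{p+1}=e_{p+1}F$ and $F\iota=-\iota F$ one checks that $e_{p+1}\iota$ is self-adjoint, odd and $\Cl^{p,q}$-linear, squares to $1$, and anticommutes with $F$; hence $G_F(\theta)\in\Fred^{p,q}(H)$, with $G_F(\theta)^2=\cos^2(\theta)F^2+\sin^2(\theta)\ge\sin^2(\theta)$, so that $G_F(\theta)\in\cG^{p,q}(H)$ for $0<\theta<\pi$ while $G_F(0)=F$ and $G_F(\pi)=-F$. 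Moreover $\cG^{p,q}(H)$ is contractible: functional calculus retracts it onto the space of odd self-adjoint $\Cl^{p,q}$-linear involutions $f$, which via $f\mapsto f\iota$ is the space of ample $\Cl^{p+1,q}$-structures extending the given one, contractible by Lemma \ref{kuipertheoremb} — the condition defining $\Fred^{p,q}(H)$ being arranged precisely so that the relevant extensions are ample. Since also $F\mapsto-F$ is homotopic to the identity on $\Fred^{p,q}(H)$ (a Kuiper-type argument on $H\oplus H\cong H$), the path-space fibration over $\cG^{p,q}(H)\times\cG^{p,q}(H)$ repackages $F\mapsto G_F$ as a map $\Fred^{p+1,q}(H)\to\Omega\Fred^{p,q}(H)$; the key claim is that this is a weak homotopy equivalence, and the variant exchanging $p$ and $q$ (using a positively squaring generator) is handled the same way. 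Granting this, $\{\Fred^{p,q}(H)\}$ is an $8$-periodic $\Omega$-spectrum.

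\emph{Identification with $\KO$, and the obstacle.} Clifford direct sum on $H\oplus H\cong H$ makes the $\Fred^{p,q}(H)$ into $H$-spaces, so by the previous step $X\mapsto[X_+,\Fred^{p,q}(H)]$ is a cohomology theory. To match it with $\KO^{q-p}$ I will, for compact $X$, send a Fredholm family to the Atiyah--Bott--Shapiro class of its kernel bundle with induced $\Cl^{p,q}$-grading (after a compact perturbation making $\dim\ker$ locally constant), and prove this natural transformation bijective — surjectivity by realizing a Clifford-module class as the kernel of an explicit finite-rank operator stabilized by an invertible ample summand, injectivity by the deformation analysis above — anchoring the computation at $X=\ast$, where $\pi_0\Fred^{p,q}(H)$ is computed by an explicit index and matches the Atiyah--Bott--Shapiro description of $\KO^{q-p}(\ast)$; Brown representability then covers general CW complexes. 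The one genuinely hard point is the weak equivalence $\Fred^{p+1,q}(H)\simeq\Omega\Fred^{p,q}(H)$ (equivalently, the injectivity just mentioned): it rests on reducing the analysis of the loop space to a finite-dimensional statement about loops of Clifford-module endomorphisms and then exhibiting explicit homotopies, and it is exactly here that ampleness and — in the degrees $p-q\equiv-1\pmod4$ — the essential-(in)definiteness condition are indispensable (without the latter, $\Fred^{p+1,q}(H)$ acquires spurious contractible components not seen by $\Omega\Fred^{p,q}(H)$). The remainder is bookkeeping with Clifford algebras together with Kuiper's theorem and standard facts about representable functors.
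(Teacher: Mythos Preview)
The paper does not actually prove this theorem: it is stated as a classical result with citation to \cite{AS69} and \cite{Karoubi-paper}, followed only by the one-line remark that ``using the Morita equivalences, it is easy to derive Theorem \ref{atiyah-singer-karoubi1} from the version proven in \cite{AS69}.'' Your proposal is therefore not to be compared against a proof in the paper but against the cited literature.

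That said, your sketch is a faithful outline of the Atiyah--Singer argument in \cite{AS69}: the Morita reduction is exactly the paper's remark, the Bott map $F\mapsto(\theta\mapsto\cos\theta\,F+\sin\theta\,e_{p+1}\iota)$ is the construction of \cite{AS69} (and is what the paper records in a different guise as its Bott map \eqref{bott-map}), and the contractibility of $\cG^{p,q}(H)$ is precisely what the paper proves separately as Lemma \ref{kuipertheorem}. Your identification of the hard step --- the weak equivalence $\Fred^{p+1,q}(H)\simeq\Omega\Fred^{p,q}(H)$ and the role of the essential-(in)definiteness condition in the degrees $p-q\equiv-1\pmod4$ --- is accurate, and you correctly flag it as the place where real work happens rather than pretending to have supplied it. In short: your proposal is a correct high-level roadmap of the classical proof the paper cites, and goes well beyond what the paper itself offers; the only caveat is that the ``key claim'' you grant yourself is the entire content of \cite{AS69}, so this is a sketch rather than a self-contained proof.
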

Using the Morita equivalences, it is easy to derive Theorem \ref{atiyah-singer-karoubi1} from the version proven in \cite{AS69}.

\begin{lemma}\label{kuipertheorem}
If $H$ is ample, then $\cG^{p,q}(H)$ is contractible.
\end{lemma}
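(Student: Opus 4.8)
The plan is to reduce the contractibility of $\cG^{p,q}(H)$ to Lemma \ref{kuipertheoremb}, exactly as the proof of Theorem \ref{atiyah-singer-karoubi1} reduces the Fredholm model to the case of structures, by exhibiting $\cG^{p,q}(H)$ as (homotopy equivalent to) a space of ample Clifford structures. First I would observe that an invertible $\Cl^{p,q}$-Fredholm operator $F \in \cG^{p,q}(H)$ has the following normal form up to homotopy: using the spectral theorem for the self-adjoint invertible operator $F$, the map $F \mapsto F(F^2)^{-1/2} = \operatorname{sign}(F)$ is a norm-continuous deformation retraction of $\cG^{p,q}(H)$ onto the subspace of those $F$ which are additionally \emph{involutions}, i.e. $F^2 = 1$. (One checks the path $t \mapsto F((1-t) + tF^{-2})^{1/2}$, or a functional-calculus variant, stays inside $\cG^{p,q}(H)$: it is self-adjoint, odd, $\Cl^{p,q}$-linear and invertible throughout since these properties are preserved by continuous functional calculus of $F$.)

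Next, for an involution $F = f$ with the stated symmetry properties, the tuple $(\iota, e_1,\dots,e_p, f\iota, \eps_1,\dots,\eps_q)$ satisfies the $\Cl^{p,q}$-relations \eqref{clifford-relations}: $f$ is odd means $f\iota = -\iota f$, so $(f\iota)^2 = -f\iota f \iota = f f \iota \iota = 1$ wait — rather $(f\iota)^2 = f(\iota f)\iota = -f(f\iota)\iota = -(ff)(\iota\iota) = -1$, so $f\iota$ behaves like one of the $e_i$'s; and $f$ Clifford-linear together with oddness gives that $f\iota$ anticommutes with $\iota$, with each $e_i$ and with each $\eps_j$. Thus $(\iota, e_i, \eps_j, f\iota)$ is a $\Cl^{p+1,q}$-structure on $H$, and it is ample: this is precisely the content of the remark following the definition of $\Fred^{p,q}(H)$ (the condition defining $\Fred^{p,q}$ in the case $p-q\equiv -1$ was designed exactly so that this structure is always ample, and in the other cases ampleness is automatic since $H$ is an ample $\Cl^{p,q}$-Hilbert space to begin with). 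Conversely, an ample $\Cl^{p+1,q}$-structure extending the given ample $\Cl^{p,q}$-structure $(\iota, e_i, \eps_j)$ on $H$ is the same as a choice of an extra generator $e_{p+1}$, and $f := e_{p+1}\iota^{-1} = -e_{p+1}\iota$ is then an odd, self-adjoint, $\Cl^{p,q}$-linear involution, i.e. an element of $\cG^{p,q}(H)$ of involution type. So the subspace of involutions in $\cG^{p,q}(H)$ is homeomorphic to the fibre over the point $(\iota,e_i,\eps_j) \in \cS^{p,q}(H)$ of the forgetful map $\cS^{p,q+1}(H) \to \cS^{p,q}(H)$ — here I am using the Morita-type identification $\Cl^{p+1,q} \leftrightarrow \Cl^{p,q+1}$ to match signs, or simply run the argument with $(f, \eps)$ roles; either way one lands on a forgetful map of the form in Lemma \ref{kuipertheoremb}. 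By that lemma this fibre is contractible, hence $\cG^{p,q}(H)$ is contractible.

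The main obstacle I anticipate is bookkeeping, not mathematics: getting the signs and the choice of which Clifford generator $f\iota$ plays right, so that the relevant forgetful map is genuinely one of the maps $\cS^{s,t}(H) \to \cS^{p,q}(H)$ covered by Lemma \ref{kuipertheoremb} (with the appropriate Morita equivalence inserted when $q$ rather than $p$ is incremented), and verifying carefully in the borderline case $p - q \equiv -1 \pmod 4$ that the defining condition on $\Fred^{p,q}(H)$ indeed forces the associated structure on $\ker(F)^\perp = H$ to be ample — but this last point is already asserted in the discussion immediately preceding Theorem \ref{atiyah-singer-karoubi1}, so it may simply be cited. A secondary point to handle with a little care is the continuity of the retraction $F \mapsto \operatorname{sign}(F)$ on $\cG^{p,q}(H)$ in the operator norm: since $0$ is in the resolvent set of each such $F$ and $\|F^{-1}\|$ is only locally bounded, one should either work locally or note that $\operatorname{sign}$ is continuous on $\{F : F^* = F, F \text{ invertible}\}$ in norm because it is given by a holomorphic functional calculus with a contour avoiding a neighbourhood of $0$ that can be chosen locally uniformly.
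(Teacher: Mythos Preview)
Your proposal is correct and follows essentially the same approach as the paper: retract $\cG^{p,q}(H)$ onto its subspace $\cG^{p,q}_0(H)$ of involutions via spectral deformation, then identify an involution $F$ with the extra Clifford generator $F\iota$ so that $\cG^{p,q}_0(H)$ becomes the fibre of the forgetful map $\cS^{p+1,q}(H)\to\cS^{p,q}(H)$, which is contractible by Lemma~\ref{kuipertheoremb}. The only wrinkle is your detour through $\cS^{p,q+1}$ and an alleged Morita identification $\Cl^{p+1,q}\leftrightarrow\Cl^{p,q+1}$ (which does not exist in general); since your own computation shows $(f\iota)^2=-1$, the new generator is of $e$-type and the relevant map is simply $\cS^{p+1,q}\to\cS^{p,q}$, exactly as in the paper---no Morita juggling is needed.
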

\begin{proof}
This follows without pain from Lemma \ref{kuipertheoremb}: inside $\cG^{p,q} (H)$, there is the subspace $\cG^{p,q}_{0}(H)$ of involutions. By a spectral deformation argument, the inclusion $\cG^{p,q}_{0}(H)\to \cG^{p,q}(H)$ is a homotopy equivalence (the details of this argument can be found in \cite{AS69}).
If $F \in \cG^{p,q}_{0}(H)$, then $F\iota$ defines an extension of the $\Cl^{p,q}$-structure to a $\Cl^{p+1,q}$-structure. Therefore, we can identify $\cG^{p,q}_{0} (H)$ with the fiber of the restriction map $\cS^{p+1,q} \to \cS^{p,q}$, and so it is contractible by Lemma \ref{kuipertheoremb}.
\end{proof}
By the symbol $\Omega \Fred^{p,q} (H)$, we denote the space of continuous paths $\gamma: \bI:=[-1,1] \to \Fred^{p,q}(H)$ such that $\gamma(\pm 1)\in \cG^{p,q}(H)$. This space indeed has the weak homotopy type of the homotopy-theoretic loop space of $\Fred^{p,q}(H)$, provided that $H$ is ample. The proof is a standard exercise in elementary homotopy theory, using Lemma \ref{kuipertheorem}.

For many practical purposes, the description of $KO$-Theory given in Theorem \ref{atiyah-singer-karoubi1} is not flexible enough. Before we can describe a more useful model, we need to say a few words about Fredholm families on Hilbert bundles.

\subsection{A digression on Hilbert bundles}\label{hilbertbundles}

For us, a \emph{Hilbert bundle} will always be a fiber bundle $V \to X$ with structural group $U(H)_{c.o.}$ and fiber $H$. 
Here, $H$ is a separable (possibly finite-dimensional) real Hilbert space and $U(H)_{c.o.}$ is its unitary group, with the compact-open topology. The fiber over $x \in X$ will be denoted $V_x$. The trivial Hilbert bundle with fiber $H$ over $X$ will be denoted by the symbol $H_X$. 
A $\Cl^{p,q}$-structure on $V$ will be described by a tuple of isometric automorphisms $\iota, e_1, \ldots, e_p, \eps_1, \ldots, \eps_q$ of $V$, satisfying the relations (\ref{clifford-relations}).
A Fredholm family $F$ on $V$ will be given by a collection $(F_x)_{x \in X}$, $F_x$ a Fredholm operator on $V_x$, and likewise a $\Cl^{p,q}$-Fredholm family will be a Fredholm family such that $F_x : V_x \to V_x$ is a $\Cl^{p,q}$-Fredholm operator for each $x \in X$. One needs a continuity condition on $x \mapsto F_x$, and the right formulation of it is a highly nontrivial insight by Dixmier and Douady \cite{DD}. It is given in Definitions \ref{defn-homomorphism}, \ref{defn-compactness} and \ref{definition:fredholmfamily} below (we were also strongly influenced by Atiyah-Segal \cite{Atiseg} and Kasparov's $\KK$-Theory).

\begin{definition}\label{defn-homomorphism}
A \emph{homomorphism} $F: V \to V'$ of Hilbert bundles is a fiber-preserving continuous map which is linear in each fiber.
\end{definition}
A homomorphism is determined by a collection $(F_x)_{x \in X}$ of bounded linear operators $F_x: V_x \to V'_x$. If $X$ is locally compact or metrizable, then the function $X \to \bR$, $x \mapsto \norm{F_x}$ is locally bounded, by the Banach-Steinhaus theorem (but in general not continuous).

\begin{example}\label{homomorphism-trivial-bundle}
Let $H$ be a separable Hilbert space, $X$ a compactly generated topological space and $F: X \to \Lin (H)$ be a map. The map $ H_X \to H_X$, $(x, v) \mapsto (x, F(x)v)$ is a homomorphism of Hilbert bundles if and only if $F$ is continuous when the target $\Lin (H)$ is equipped with the compact-open topology.
\end{example}

The collection $(F^*_x)_{x \in X}$ of adjoint operators may or may not form a homomorphism, and if it does, we say that $F$ is \emph{adjointable}. Note that a homomorphism which is pointwise self-adjoint is adjointable.
The set of adjointable endomorphisms of $V$ is a unital $*$-algebra $\Lin_X (V)$.
We say that $F:  V\to V$ is an isomorphism or invertible if it is a unit in $\Lin_X (V)$, in other words: $(F_x)_{x \in X}$ is an isomorphism if $F_x$ is an isomorphism for all $x \in X$ and if the collection $((F_x)^{-1})_{x \in X}$ is a homomorphism.
For self-adjoint homomorphisms $F: V \to V$, there is a functional calculus: if $F: V \to V$ is self-adjoint and $f: \bR \to \bR$ is a continuous function, then the collection $(f(F_x))_{x \in X}$ is a homomorphism, see \cite[p. 245]{DD}.

\begin{lemma}\label{invertibility-via-positivity}
Let $V \to X$ be a Hilbert bundle and let $F: V \to V$ be a self-adjoint homomorphism. Assume that there is a continuous function $\epsilon: X \to (0, \infty)$ such that $F_x^2 \geq \epsilon(x)^2$ for all $x \in X$. Then $F$ is invertible.
\end{lemma}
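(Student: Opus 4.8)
The plan is to deduce the statement from the functional calculus for self-adjoint homomorphisms recalled just above. First I would record that each $F_x$ is individually invertible: since $F_x$ is self-adjoint with $F_x^2 \geq \epsilon(x)^2 > 0$, one has $\norm{F_x v}^2 = \langle F_x^2 v, v\rangle \geq \epsilon(x)^2 \norm{v}^2$ for all $v \in V_x$, so $F_x$ is bounded below, hence injective with closed range; self-adjointness then forces $\overline{\im F_x} = (\ker F_x)^\perp = V_x$, so $F_x$ is bijective and thus an isomorphism of $V_x$. Equivalently, $\spec(F_x^2)\subseteq[\epsilon(x)^2,\infty)$, whence $\spec(F_x)\subseteq(-\infty,-\epsilon(x)]\cup[\epsilon(x),\infty)$; in particular $0\notin\spec(F_x)$.

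It then remains to check that $x\mapsto (F_x)^{-1}$ defines a homomorphism of Hilbert bundles, and since being a homomorphism is a local condition on $X$ it suffices to verify this near an arbitrary point $x_0\in X$. I would set $\delta:=\tfrac12\epsilon(x_0)>0$ and use the continuity of $\epsilon$ to choose an open neighbourhood $U\ni x_0$ with $\epsilon(x)>\delta$ for all $x\in U$. Picking any continuous function $f:\bR\to\bR$ with $f(t)=1/t$ for $|t|\geq\delta$ (for instance $f(t)=t/\max(t^2,\delta^2)$), the spectral estimate from the first step gives $\spec(F_x)\cap(-\delta,\delta)=\emptyset$, so $f(F_x)=(F_x)^{-1}$ for every $x\in U$. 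By the functional calculus for self-adjoint homomorphisms (\cite[p. 245]{DD}), the collection $\bigl(f(F_x)\bigr)_{x\in U}=\bigl((F_x)^{-1}\bigr)_{x\in U}$ is a homomorphism $V|_U\to V|_U$. Covering $X$ by such neighbourhoods shows that $\bigl((F_x)^{-1}\bigr)_{x\in X}$ is a homomorphism, and together with the pointwise invertibility from the first step this says precisely that $F$ is a unit in $\Lin_X(V)$, i.e. invertible.

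I do not expect a genuine obstacle here; the only subtlety worth flagging is that the cut-off function $f$ realizing the inverse depends on the chosen neighbourhood, i.e. on a positive lower bound for $\epsilon$ there. Hence the argument must be run locally and the conclusion assembled from the locality of the notion of Hilbert bundle homomorphism: when $\inf_X\epsilon=0$ there is in general no single continuous $f$ with $f(t)=1/t$ on $\bigcup_{x\in X}\spec(F_x)$, and the statement is rescued exactly by this locality.
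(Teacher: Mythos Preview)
Your proof is correct and follows essentially the same route as the paper: reduce to neighbourhoods on which $\epsilon$ has a positive lower bound, apply the functional calculus with a continuous $f$ equal to $1/t$ outside that neighbourhood of $0$, and then use that invertibility (equivalently, the homomorphism property of the pointwise inverse) is local in $X$. The paper's version is terser and does not spell out the pointwise invertibility or the spectral description, but the argument is the same.
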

\begin{proof}
Invertibility of $F$ is a local (in $X$) property: if there is an open cover $\cU$ of $X$ such that $F|_U$ is invertible for all $U \in \cU$, then $F$ is invertible. Under the assumptions of the Lemma, there is an open cover $\cU$ of $X$ and for each $U \in  \cU$, there is $\epsilon>0$ such that $F_x^2 \geq \epsilon^2 >0$ for all $x \in U$. 
Let $f: \bR \to \bR$ be an odd continuous function with $f(t)=1/t$ for $|t| \geq \epsilon$. Then $f(F)$ is a homomorphism and for $x \in U$, we have $f(F_x)F_x=F_x f(F_x)=1$. Therefore, $F|_U$ is invertible for all $U \in \cU$ and hence $F$ is invertible.
\end{proof}

The notion of a \emph{compact operator} $V \to V$ is more difficult to formulate and we follow \cite[\S 22]{DD} (see also Proposition 3 loc.cit.). Let $\Gamma$ be the space of continuous sections of $V$, and for each $s, t \in \Gamma$, one defines the adjointable operator $\theta_{s,t} \in \Lin_X(V)$ by $\theta_{s,t} (v):= \langle s(x), v \rangle t(x)$ (for $v \in V_x$). The linear span of all these operators is a $*$-ideal $\Fin_X (V) \subset \Lin_X (V)$ (if $F$ is adjointable, then $F \theta_{s,t} = \theta_{s,Ft}$ and $\theta_{s,t} F = \theta_{F^* s ,t}$). Clearly, the operator $(\theta_{s,t})_x \in \Lin (V_x)$ is of finite rank and hence an element of $\Kom (V_x)$, the space of compact operators in $V_x$. 
\begin{definition}\label{defn-compactness}
An endomorphism $F=(F_x)_{x \in X}$ of $V$ is \emph{compact} if for each $x \in X$ and each $\epsilon>0$, there exists $G \in \Fin_X (V)$ and a neighborhood $U$ of $x$ such that for all $y \in U$, one has $\norm{F_y-G_y} \leq \epsilon$. The space of all compact operators is denoted $\Kom_X (V)$ and is a two-sided $*$-ideal in $\Lin_X (V)$.
\end{definition}
Note that if $F$ is compact, then $F_x$ is compact for all $x \in X$. It is not hard to characterize compact operators in trivial bundles.

\begin{lemma}
The compact operators of the trivial Hilbert bundle $H_X$ are precisely the continuous maps $X \to \Kom (H)$ (the target has the \emph{norm} topology, in contrast to the situation of Example \ref{homomorphism-trivial-bundle}).
\end{lemma}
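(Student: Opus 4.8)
The plan is to prove the two inclusions $C(X,\Kom(H)) \subseteq \Kom_X(H_X)$ and $\Kom_X(H_X) \subseteq C(X,\Kom(H))$ separately, both by elementary $\epsilon$-arguments. The single observation that makes everything go is that a continuous section of the \emph{trivial} bundle $H_X$ is exactly a norm-continuous map $X \to H$; consequently, for any finite-rank operator $K = \sum_{i=1}^n \langle a_i, \cdot \rangle b_i$ on $H$, the constant family $y \mapsto K$ equals the endomorphism $\sum_{i=1}^n \theta_{s_i, t_i} \in \Fin_X(H_X)$, where $s_i \equiv a_i$ and $t_i \equiv b_i$ are the corresponding constant sections.

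For the inclusion $C(X,\Kom(H)) \subseteq \Kom_X(H_X)$, I would take a norm-continuous map $F \colon X \to \Kom(H)$. Each $F_x$ is compact by hypothesis, so only the local-approximation condition in the definition of $\Kom_X$ has to be checked. Given $x \in X$ and $\epsilon > 0$, I would pick a finite-rank operator $K$ on $H$ with $\norm{F_x - K} \le \epsilon/2$ and, using norm-continuity of $F$, a neighbourhood $U$ of $x$ with $\norm{F_y - F_x} \le \epsilon/2$ for all $y \in U$. By the observation above, the constant family $G$ with $G_y = K$ lies in $\Fin_X(H_X)$, and $\norm{F_y - G_y} \le \epsilon$ for every $y \in U$, which is what is required.

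For the reverse inclusion, I would take $F \in \Kom_X(H_X)$ and show that $x \mapsto F_x$ is norm-continuous at an arbitrary $x_0$. Given $\epsilon > 0$, choose $G \in \Fin_X(H_X)$ and a neighbourhood $U$ of $x_0$ with $\norm{F_y - G_y} \le \epsilon/3$ for $y \in U$. Writing $G = \sum_{i=1}^n \theta_{s_i, t_i}$ with $s_i, t_i \in \Gamma$ (hence norm-continuous $X \to H$), one has $G_y = \sum_i \langle s_i(y), \cdot \rangle t_i(y)$; since a rank-one operator $\langle u, \cdot \rangle v$ has norm $\norm{u}\,\norm{v}$, the telescoping identity $\langle s_i(y),\cdot\rangle t_i(y) - \langle s_i(x_0),\cdot\rangle t_i(x_0) = \langle s_i(y)-s_i(x_0),\cdot\rangle t_i(y) + \langle s_i(x_0),\cdot\rangle\, (t_i(y)-t_i(x_0))$ together with continuity of the $s_i, t_i$ gives $\norm{G_y - G_{x_0}} \to 0$ as $y \to x_0$. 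Shrinking to a neighbourhood $U' \subseteq U$ on which $\norm{G_y - G_{x_0}} \le \epsilon/3$ and estimating $\norm{F_y - F_{x_0}} \le \norm{F_y - G_y} + \norm{G_y - G_{x_0}} + \norm{G_{x_0} - F_{x_0}} \le \epsilon$ for $y \in U'$ completes the argument.

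I do not expect a genuine obstacle here; the only point that needs a moment's care is the identification of $\Gamma$ for a trivial Hilbert bundle with the norm-continuous $H$-valued functions on $X$, since this is what licenses both the constant finite-rank families used in the first inclusion and the norm-continuity of $y \mapsto G_y$ used in the second. Everything else is the two three-$\epsilon$ estimates.
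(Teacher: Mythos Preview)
Your proof is correct. The paper states this lemma without proof, treating it as a routine observation; your argument supplies exactly the elementary $\epsilon/3$-estimates one would expect, and the key identification you flag (that sections of the trivial bundle $H_X$ are precisely norm-continuous maps $X \to H$, so that constant finite-rank operators lie in $\Fin_X(H_X)$ and every $G \in \Fin_X(H_X)$ is norm-continuous in $y$) is the only point requiring any thought.
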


\begin{definition}\label{definition:fredholmfamily}
Let $V \to X$ be a Hilbert bundle and $F\in \Lin_X(V)$. We say that $F$ is a \emph{Fredholm family} if there exists $G \in \Lin_X (V)$ such that $FG-1,GF-1 \in \Kom_X (V)$. We say that $G$ is a \emph{parametrix} to $F$.
If $V$ is equipped with a graded $\Cl^{p,q}$-structure, then a \emph{$\Cl^{p,q}$-Fredholm family} is a Fredholm family $F=(F_x)$ such that each $F_x$ is self-adjoint, Clifford linear and odd.
\end{definition}

We need criteria to check that a family is Fredholm. The first useful thing to know is that on reasonable spaces, being Fredholm is a local property (it is not a pointwise property).

\begin{lemma}\label{gluing-lemma-fredholm}
Let $V \to X$ be a Hilbert bundle over a paracompact space and $F \in \Lin_X (V)$. Assume that there is an open cover $\cU$ of $X$ such that $F|_U$ is Fredholm for all $U \in \cU$. Then $F$ is Fredholm.
\end{lemma}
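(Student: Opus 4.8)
The plan is to patch together a global parametrix out of local ones by means of a partition of unity. Since $X$ is paracompact, I would first choose a locally finite open refinement $(U_i)_{i\in I}$ of $\cU$ together with a subordinate partition of unity $(\phi_i)_{i\in I}$, so that $\phi_i\colon X\to[0,1]$ is continuous, $\supp(\phi_i)\subset U_i$, and $\sum_i\phi_i\equiv 1$. As $(U_i)$ refines $\cU$, each $F|_{U_i}$ is Fredholm, so I may fix a parametrix $G_i\in\Lin_{U_i}(V|_{U_i})$ with $FG_i-1,\ G_iF-1\in\Kom_{U_i}(V|_{U_i})$. For each $i$ the fibrewise prescription $v\mapsto\phi_i(x)G_{i,x}v$ on $U_i$, together with $0$ off $\supp(\phi_i)$, defines a homomorphism $\phi_iG_i$ on all of $X$: the two descriptions agree on the overlap of the open sets $U_i$ and $X\setminus\supp(\phi_i)$, which cover $X$. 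It is adjointable, with pointwise adjoint $\phi_iG_i^{\ast}$, hence lies in $\Lin_X(V)$, and by local finiteness $G:=\sum_i\phi_iG_i$ is a locally finite sum and again an element of $\Lin_X(V)$.

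Next I would compute the remainders. Multiplication operators by scalar functions are central in $\Lin_X(V)$, and $\sum_i\phi_i\equiv1$, so $FG-1=\sum_i\phi_i(FG_i-1)$ and $GF-1=\sum_i\phi_i(G_iF-1)$, again locally finite sums. Since $\Kom_X(V)$ is a two-sided ideal closed under locally finite sums (near each point only finitely many summands are nonzero, and finite sums of compacts are compact), it suffices to show that each summand lies in $\Kom_X(V)$. This reduces the lemma to the following sub-claim: if $U\subset X$ is open, $K\in\Kom_U(V|_U)$, and $\phi\colon X\to[0,1]$ is continuous with $\supp(\phi)\subset U$, then $\phi K$, extended by $0$ outside $\supp(\phi)$, belongs to $\Kom_X(V)$. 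Indeed $(FG_i-1)|_{U_i}\in\Kom_{U_i}(V|_{U_i})$ and, $\Kom$ being an ideal, so is $\phi_i\cdot(FG_i-1)|_{U_i}$, whence the sub-claim applies with $U=U_i$, $\phi=\phi_i$; similarly for $G_iF-1$.

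For the sub-claim, pointwise compactness of $\phi K$ is immediate. To verify the local approximation condition in the definition of compactness at a point $x$: if $x\notin\supp(\phi)$, the open neighbourhood $X\setminus\supp(\phi)$ works with approximant $0$; if $x\in\supp(\phi)\subset U$, compactness of $K$ over $U$ yields $G'\in\Fin_U(V|_U)$ and a neighbourhood of $x$ on which $\norm{K_y-G'_y}\le\epsilon$, hence $\norm{\phi_yK_y-\phi_yG'_y}\le\epsilon$ there. The one point that needs care — and which I expect to be the main (mild) obstacle — is that $\phi G'$ must be exhibited as a genuine element of $\Fin_X(V)$, whose generators $\theta_{s,t}$ are built from \emph{global} continuous sections. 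Writing $G'=\sum_j\theta_{s_j,t_j}$ with $s_j,t_j\in\Gamma(V|_U)$ and fixing, by paracompactness, a function $\chi\colon X\to[0,1]$ with $\chi\equiv1$ on $\supp(\phi)$ and $\supp(\chi)\subset U$, one checks $\phi\,\theta_{s_j,t_j}=\theta_{\chi s_j,\,\phi t_j}$, and both $\chi s_j$ and $\phi t_j$ extend by $0$ to global continuous sections of $V$; thus $\phi G'\in\Fin_X(V)$. This proves the sub-claim, hence $FG-1,\ GF-1\in\Kom_X(V)$, so $G$ is a global parametrix and $F$ is Fredholm. Throughout, the only genuinely essential hypothesis beyond the local Fredholm property is paracompactness, used once to obtain the locally finite partition of unity and once for the cut-off $\chi$.
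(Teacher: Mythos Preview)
Your proof is correct and follows exactly the paper's approach of patching local parametrices via a partition of unity; the paper's argument is simply the one-line version, invoking without proof that ``the property of being compact is local in $X$,'' while you have carefully verified this by the sub-claim about extending $\phi K$ and exhibiting $\phi G'\in\Fin_X(V)$ via the cut-off $\chi$. No gaps.
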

\begin{proof}
Let $G_U$ be a parametrix for $F|_U$. Let $(\lambda_U)_{U \in \cU}$ be a partition of unity subordinate to $\cU$. Then $\sum_{U \in \cU }\lambda_U  G|_U$ is a parametrix to $F$ (since the property of being compact is local in $X$).
\end{proof}

\begin{lemma}\label{local-to-global-fredholm-families}
Let $V \to X$ be a Hilbert bundle over a space. Let $(F_x)_{x \in X}$ be a collection of bounded operators $F_x \in \Lin (V_x)$. Assume that each $x \in X$ admits a neighborhood $U$ of $x$ and a trivialization $V|_U\cong U \times H$ such that in this trivialization $F$ is given by a continuous map $X \to \Lin (H)$ (the target has the norm topology). Then $F \in \Lin_X(V)$. If $F_x$ is compact (invertible) for each $x \in X$, then $F$ is compact (invertible). If $F_x$ is Fredholm for all $x \in X$, then $F$ is Fredholm, provided that $X$ is paracompact.
\end{lemma}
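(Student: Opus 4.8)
The plan is to reduce everything to the case of a trivial bundle over a single trivializing neighborhood and then, for each of the four conclusions, to quote the appropriate locality principle. By hypothesis every point of $X$ has a neighborhood $U$ with a trivialization $V|_U \cong U \times H$ in which $F$ becomes a norm-continuous map $f\colon U \to \Lin(H)$; the identifications $V_x \cong H$ are isometric because the structural group is $U(H)$, so $f(y)$ is compact/invertible/Fredholm exactly when $F_y$ is, and norms are preserved. Thus it suffices to prove: (i) such an $f$ defines an adjointable endomorphism of $H_U$; (ii) if every $f(y)$ is compact, it defines a compact endomorphism; (iii) if every $f(y)$ is invertible, it defines an invertible one; (iv) if every $f(y)$ is Fredholm, it defines a Fredholm family over a possibly smaller neighborhood. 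Granting these, the lemma follows by gluing, since: being a (continuous) bundle homomorphism is local; being adjointable is local; being compact is local in $X$ (used in the proof of Lemma \ref{gluing-lemma-fredholm}); being invertible is local (as observed in the proof of Lemma \ref{invertibility-via-positivity}); and being Fredholm is local for paracompact $X$ (Lemma \ref{gluing-lemma-fredholm}).

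Steps (i), (ii), (iii) are soft. The norm topology on $\Lin(H)$ is finer than the compact-open topology, so a norm-continuous $f$ is compact-open continuous and hence defines a homomorphism $H_U \to H_U$ (the Example in Section \ref{hilbertbundles}); since $T \mapsto T^*$ is an isometry of $\Lin(H)$, the map $y \mapsto f(y)^*$ is norm-continuous too and defines the fibrewise adjoint, so $F|_U$ is adjointable. For (iii), if every $f(y)$ is invertible then $f$ lands in the open set $\GL(H) \subset \Lin(H)$, on which inversion is norm-continuous, so $y \mapsto f(y)^{-1}$ is again a norm-continuous map into $\Lin(H)$, is a homomorphism, and is the fibrewise inverse of $F|_U$. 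For (ii), the isometric trivialization carries each compact $f(y)$ to a compact operator, so $f$ is a norm-continuous map $U \to \Kom(H)$, and such maps are precisely the compact endomorphisms of $H_U$ by the lemma characterizing compact operators of trivial Hilbert bundles.

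The one point needing a genuine construction is (iv): a norm-continuous family of Fredholm operators should admit, locally, a norm-continuously varying parametrix modulo compacts. I would argue as follows. Fix $x_0 \in U$, put $T := f(x_0)$, and (using that $T$ is Fredholm) choose $P \in \Lin(H)$ with $PT - 1$ and $TP - 1$ of finite rank. Shrinking $U$ to a neighborhood $U'$ of $x_0$ we may assume $\norm{P(f(y) - T)} < 1$ and $\norm{(f(y)-T)P} < 1$ for $y \in U'$, so that $1 + P(f(y)-T)$ and $1 + (f(y)-T)P$ are invertible with norm-continuously varying inverses $Q_y$ and $R_y$. A one-line computation then gives $(Q_yP)f(y) - 1 = Q_y(PT-1)$ and $f(y)(PR_y) - 1 = (TP-1)R_y$, both of finite rank; so in the Calkin algebra the class of $Q_yP$ is a left inverse and the class of $PR_y$ a right inverse of the class of $f(y)$, whence the two classes agree and $Q_yP$ is a two-sided parametrix of $f(y)$ depending norm-continuously on $y$. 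Therefore $F|_{U'}$ is a Fredholm family, and Lemma \ref{gluing-lemma-fredholm} together with paracompactness completes the argument.

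I do not expect a serious obstacle here; the most substantive step is the local parametrix construction in (iv), which is the only non-formal ingredient, and the hypothesis that \emph{every} $F_x$ is Fredholm is exactly what is needed to start that construction at each point $x_0$. The rest is bookkeeping: reducing to trivial bundles and invoking the right locality statement for each conclusion.
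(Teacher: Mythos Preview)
Your proposal is correct and follows the same approach as the paper: reduce to trivializing neighborhoods, verify adjointability/compactness/invertibility there (the paper calls these ``trivial consequences of what we said so far''), and invoke Lemma~\ref{gluing-lemma-fredholm} for the Fredholm statement. The only difference is that you actually carry out the local parametrix construction in (iv), whereas the paper simply asserts that $F$ is locally Fredholm; your Neumann-series perturbation of a parametrix for $f(x_0)$ is the standard way to justify this step and is entirely sound.
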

\begin{proof}
The statements on adjointability, compactness and invertibility are trivial consequences of what we have said so far. Next, recall that the quotient map $\Lin (H) \to \Lin (H)/\Kom (H)$ has a continuous section (both spaces are equipped with the norm topology). This follows from a general theorem by Bartle and Graves which states that if $\phi:X \to Y$ is a surjective bounded operator between Banach spaces, then there is a continuous cross-section $\sigma:Y \to X$ (\cite{BartGrav}, see also \cite[p. 187]{Holm} for the explicit statement). Using this section, one constructs a map $\sigma:\Fred (H) \to \Fred(H)$ such that $\sigma (F)$ is a parametrix for $F$, for each $F \in \Fred(H)$. 
Therefore, if $F_x$ is Fredholm for all $x \in X$, then $F$ is locally Fredholm. By Lemma \ref{gluing-lemma-fredholm}, it follows that $F$ is globally Fredholm.
\end{proof}

\begin{lemma}\label{fredholm-via-positivity}
Let $V \to X$ be a Hilbert bundle, $X$ paracompact and let $F \in \Lin_X (V)$ be self-adjoint. 
The following statements are equivalent.

\begin{enumerate}
\item $F$ is a Fredholm family.
\item There is a continuous function $\epsilon: X \to (0, \infty)$ such that $F^{2} \geq \epsilon^2 \bmod \Kom_X (V)$ in the sense that there exists $K \in \Kom_X (V)$ with $F^2 + K \geq \epsilon^2$ (pointwise).
\end{enumerate}
\end{lemma}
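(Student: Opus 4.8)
The plan is to prove the two implications separately, and the key observation for $(2)\Rightarrow(1)$ is that being a Fredholm family is a local property on the paracompact base (Lemma \ref{gluing-lemma-fredholm}), so it suffices to verify $(1)$ on the members of an open cover. Near a given point, continuity of $\epsilon$ lets us pass to an open set $U$ on which $\epsilon \geq \epsilon_0$ for some constant $\epsilon_0>0$; then $F^2+K \geq \epsilon_0^2>0$ over $U$, so $F^2+K$ is invertible over $U$ by Lemma \ref{invertibility-via-positivity}. Writing $R:=(F^2+K)^{-1}\in\Lin_U(V)$, I claim that $G:=RF$ is a parametrix for $F|_U$. Indeed $GF-1 = RF^2-1 = RF^2 - R(F^2+K) = -RK \in \Kom_U(V)$ since $\Kom_U(V)$ is an ideal; and from the elementary identity $[F,R] = -R\,[F,F^2+K]\,R = -R\,[F,K]\,R$ (using $[F,F^2]=0$) together with $[F,K]=FK-KF\in\Kom_U(V)$, one gets $[F,R]\in\Kom_U(V)$, whence $FG-1 = FRF-1 = [F,R]F - RK \in \Kom_U(V)$. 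Thus $F$ is locally Fredholm, and Lemma \ref{gluing-lemma-fredholm} (using paracompactness) completes this direction.

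For $(1)\Rightarrow(2)$, fix a global parametrix $G$ and put $K_2:=GF-1\in\Kom_X(V)$. Since $F$ is self-adjoint,
\[
F G^*G F = (GF)^*(GF) = (1+K_2)^*(1+K_2) = 1 + K_3, \qquad K_3 := K_2 + K_2^* + K_2^*K_2 \in \Kom_X(V).
\]
Since $x\mapsto\norm{G_x}^2$ is locally bounded (Banach--Steinhaus), a partition of unity on the paracompact space $X$ produces a continuous function $c:X\to(0,\infty)$ with $c(x)\geq\norm{G_x}^2$ for all $x$. Then $G_x^*G_x\leq c(x)\,\id$, hence $F_xG_x^*G_xF_x\leq c(x)F_x^2$, i.e. $\id+(K_3)_x\leq c(x)F_x^2$ for every $x$; dividing by $c(x)$ gives $F_x^2 - c(x)^{-1}(K_3)_x \geq c(x)^{-1}\id$. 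As $c^{-1}\cdot\id_V\in\Lin_X(V)$ (it is norm-continuous in any trivialization), the family $K:=-c^{-1}K_3$ lies in the ideal $\Kom_X(V)$, and with the continuous positive function $\epsilon:=c^{-1/2}$ we obtain $F^2+K\geq\epsilon^2$ pointwise, which is $(2)$.

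I do not expect a genuine obstacle. The two points needing a little care are both in $(1)\Rightarrow(2)$: upgrading the a priori only locally bounded quantity $\norm{G_\cdot}^2$ to a globally defined continuous dominating function $\epsilon$ — this is exactly where paracompactness is used — and checking at each step that the operators produced ($c^{-1}K_3$, $[F,R]F$, $RK$, etc.) remain inside $\Kom_X(V)$, which is immediate once one knows $\Kom_X(V)$ is a two-sided $*$-ideal and that multiplication by a continuous scalar function is an adjointable endomorphism. For $(2)\Rightarrow(1)$ the one thing worth emphasising is that one should \emph{not} try to manufacture a parametrix via a functional-calculus cut-off $g(F)$ with $g(t)=1/t$ for large $|t|$ (controlling the error $tg(t)-1$ evaluated at $F$ modulo $\Kom_X(V)$ is then awkward), but rather use the honest inverse $(F^2+K)^{-1}$ supplied by Lemma \ref{invertibility-via-positivity}.
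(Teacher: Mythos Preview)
Your proof is correct and follows essentially the same strategy as the paper. Two minor simplifications are available: for $(2)\Rightarrow(1)$, Lemma \ref{invertibility-via-positivity} already accepts a continuous $\epsilon:X\to(0,\infty)$, so no localization is needed and $(F^2+K)^{-1}$ exists globally; moreover, the paper avoids your commutator computation by simply noting that $F(F^2+K)^{-1}$ is a right parametrix and $(F^2+K)^{-1}F$ a left parametrix (since $F^2(F^2+K)^{-1}-1=-K(F^2+K)^{-1}$ is compact, and similarly on the other side), which suffices. For $(1)\Rightarrow(2)$ your argument with $G^*G$ is the same as the paper's, which instead first replaces $G$ by a self-adjoint parametrix and then uses $G^2$.
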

\begin{proof}
If $F$ is Fredholm, then there is a parametrix $G$, and we can assume that $G$ is self-adjoint. Then $K= FG-1$ and $K^* = GF-1$ are compact. For $x \in X$, we pick $\delta>0$ and a neighborhood $U$ of $x$ such that $\norm{G_y} \leq \delta$ and hence $G_y^2 \leq \delta^2$ for $y \in U$ (the norm of a homomorphism is locally bounded). It follows that $F_y \delta^2 F_y \geq F_y G^2_y F_y = (1+K_y)(1+K_y^*)=:1+L_y$ and therefore $F_y^2 \geq \delta^{-2} (1+L_y)$ throughout $U$. Put $\epsilon_x := \delta^{-1}$. This proves the desired inequality locally. Globally, one patches the constant functions $\epsilon_x$ on these neighborhoods together with a partition of unity.

For the reverse implication, pick $K$ and $\epsilon$ as in (2). Then $F^2 + K$ is invertible (Lemma \ref{invertibility-via-positivity}) and $F F (F^2 +K)^{-1} -1=- K (F^2+K)^{-1}$ is compact, proving that $F (F^2 +K)^{-1}$ is a right-parametrix of $F$. Similarly, $(F^2+K)^{-1}F$ is a left-parametrix.
\end{proof}

\subsection{The Fredholm model for \texorpdfstring{$K$}--Theory, version 2}\label{fredholmsec2}

We denote the \emph{product of space pairs} by $(X,Y) \times (Z,W):= (X \times Z, X \times W \cup Y \times Z)$.

\begin{definition}\label{defn-fpq}
Let $(X,Y)$ be a space pair. A \emph{$(p,q)$-cycle} on $(X,Y)$ is a pair $(V,F)$, where $V$ is a $\Cl^{p,q}$-Hilbert bundle (with separable fibers) and $F$ a $\Cl^{p,q}$-Fredholm family on $V$. Moreover, we require that $F|_Y$ is invertible.
There are obvious notions of \emph{pullbacks}, \emph{direct sum} and \emph{isometric isomorphisms} of $(p,q)$-cycles. 
Two $(p,q)$-cycles $(V_0,F_0)$ and $(V_1,F_1)$ are \emph{homotopic} or \emph{concordant} if there exists a $(p,q)$-cycle $(V,F)$ on $(X\times [0,1],Y\times [0,1])$ such that the restriction of $(V,F)$ to $X \times \{i\}$ is isomorphic to $(V_i,F_i)$. Homotopy is an equivalence relation.
A $(p,q)$-cycle is \emph{acyclic} if $F$ is invertible. The set of homotopy classes of $(p,q)$-cycles is an abelian monoid, and we define the abelian group $F^{p,q}(X,Y)$ as the quotient of that monoid by the submonoid of homotopy classes that contain acyclic representatives.
\end{definition}

We implicitly said in the definition that $F^{p,q}(X,Y)$ is a group, and this is indeed true. For this and similar purposes, we use the following convenient criterion to prove that two $(p,q)$-cycles are concordant. We say that a space pair $(X,Y)$ is a \emph{paracompact pair} if $X$ is paracompact and $Y \subset X$ is closed. We denote the \emph{anticommutator} of two operators by
\[
\{F_0, F_1\}:= F_0 F_1 + F_1 F_0.
\]

\begin{lemma}\label{nullhomotopy-criterion}
Let $(V,c,\iota,F_0)$ and $(V,c,\iota,F_1)$ be two $(p,q)$-cycles on the paracompact space pair $(X,Y)$. If the anticommutator $\{F_0, F_1\}$ is nonnegative, i.e. $\{(F_0)_x ,(F_1)_x\} \geq 0$ for all $x \in X$, then $[V,c,\iota,F_0]=[V,c,\iota,F_1] \in F^{p,q}(X,Y)$.
\end{lemma}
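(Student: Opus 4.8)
The plan is to realize the concordance by the straight-line homotopy $F_t := (1-t)F_0 + tF_1$, $t\in[0,1]$. Explicitly, I would let $\pi\colon X\times[0,1]\to X$ be the projection, put $\tilde V:=\pi^*V$ with its pulled-back $\Cl^{p,q}$-structure $(\pi^*c,\pi^*\iota)$, and define $\tilde F$ by $\tilde F_{(x,t)}:=(1-t)(F_0)_x+t(F_1)_x$. Since $\tilde F$ is an $\bR$-affine combination of the pulled-back adjointable homomorphisms $\pi^*F_0,\pi^*F_1$, it lies in $\Lin_{X\times[0,1]}(\tilde V)$, it is pointwise self-adjoint, and it is Clifford-linear and odd because $F_0,F_1$ are and these are linear conditions. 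Its restriction to $X\times\{i\}$ is $F_i$ on the nose. It then remains to check that $\tilde F$ is a $\Cl^{p,q}$-Fredholm family and that it is invertible over $Y\times[0,1]$; granting this, $(\tilde V,\pi^*c,\pi^*\iota,\tilde F)$ is a $(p,q)$-cycle on $(X\times[0,1],Y\times[0,1])$ witnessing $[V,c,\iota,F_0]=[V,c,\iota,F_1]$ in $F^{p,q}(X,Y)$.

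The key computation is
\[
\tilde F_{(x,t)}^2=(1-t)^2(F_0)_x^2+t(1-t)\,[(F_0)_x,(F_1)_x]+t^2(F_1)_x^2 ,
\]
where $[\,\cdot\,,\,\cdot\,]$ is the anticommutator. Since $t(1-t)\ge0$ on $[0,1]$ and $[(F_0)_x,(F_1)_x]\ge0$ by hypothesis, this gives $\tilde F_{(x,t)}^2\ge(1-t)^2(F_0)_x^2+t^2(F_1)_x^2$ pointwise. Applying Lemma \ref{fredholm-via-positivity} to $F_0$ and $F_1$ over the paracompact space $X$ produces continuous $\epsilon_0,\epsilon_1\colon X\to(0,\infty)$ and $K_0,K_1\in\Kom_X(V)$ with $F_i^2+K_i\ge\epsilon_i^2$. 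Setting $\tilde K:=(1-t)^2\pi^*K_0+t^2\pi^*K_1\in\Kom_{X\times[0,1]}(\tilde V)$ (pullbacks, scalar multiples and sums of compact families are compact) and using $(1-t)^2+t^2\ge\tfrac12$, one gets $\tilde F^2+\tilde K\ge(1-t)^2\epsilon_0^2+t^2\epsilon_1^2\ge\tfrac12\min(\epsilon_0,\epsilon_1)^2$ pointwise, and the right-hand side is the square of a continuous positive function on the paracompact space $X\times[0,1]$. Lemma \ref{fredholm-via-positivity} then shows $\tilde F$ is a Fredholm family.

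For invertibility over $Y\times[0,1]$ I would argue locally: invertibility of a self-adjoint homomorphism is a local property, and over each member $U$ of a locally finite open cover of $Y$ the norms $\norm{(F_0)_x^{-1}}$ and $\norm{(F_1)_x^{-1}}$ are bounded above (Banach-Steinhaus), so $(F_i)_x^2\ge c_U^{-2}$ on $U$ for suitable constants; the same positivity estimate as above then gives $\tilde F_{(x,t)}^2\ge\tfrac12\min(c_U^{-2},{c_U'}^{-2})>0$ on $U\times[0,1]$, whence $\tilde F|_{U\times[0,1]}$ is invertible by Lemma \ref{invertibility-via-positivity}. Patching over the cover finishes this point, and with it the proof.

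The argument is essentially bookkeeping once Lemmas \ref{invertibility-via-positivity} and \ref{fredholm-via-positivity} are available; the only things that need a little care are (i) the observation $(1-t)^2+t^2\ge\tfrac12$, which is what makes the lower bounds uniform in $t$ so that a single continuous $\epsilon$ over $X\times[0,1]$ can be extracted, and (ii) the fact that the cross term $t(1-t)[(F_0)_x,(F_1)_x]$ is nonnegative precisely because of the hypothesis — this is the whole point, and without it the linear homotopy would genuinely fail to be Fredholm in the middle.
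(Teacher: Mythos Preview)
Your proof is correct and follows essentially the same approach as the paper: both exhibit the concordance as a convex combination of $F_0$ and $F_1$, square it, drop the nonnegative cross term, and invoke Lemmas \ref{invertibility-via-positivity} and \ref{fredholm-via-positivity}. The only cosmetic difference is that the paper parametrizes by $P_s=\cos(s)F_0+\sin(s)F_1$ on $[0,\pi/2]$ rather than your linear $(1-t)F_0+tF_1$; this has the minor advantage that $\cos^2+\sin^2=1$, so no $(1-t)^2+t^2\ge\tfrac12$ estimate is needed, but your version handles that point cleanly. Your treatment of invertibility over $Y$ via local Banach--Steinhaus bounds is in fact more explicit than the paper's one-line appeal to Lemma \ref{invertibility-via-positivity}.
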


This is a generalization of a special case of a result in $\KK$-theory due to Connes and Skandalis \cite[Proposition 17.2.7]{Bla}.
One important feature of Lemma \ref{nullhomotopy-criterion} is that the positivity condition can be checked pointwise, hence often the proof for general $X$ is only notationally more involved than that for $X=*$.

\begin{proof}
The homotopy is given by $P_{s}:=\cos (s) F_0 + \sin (s) F_1$, $s \in [0, \pi/2]$. Compute
\[
 P_{s}^{2}= \cos(s)^2 F_0^2 + \sin(s)^2 F_1^2 + 2 \sin(s) \cos(s) (F_0 F_1+F_1 F_0)  \geq  \cos(s)^2 F_0^2 + \sin(s)^2 F_1^2 .
\]
There is a function $\epsilon: X \to (0,\infty)$ such that $F_i^2 \geq \epsilon^2 \bmod \Kom_X (V)$ (Lemma \ref{fredholm-via-positivity}) and such that $(F_i^2)|_Y \geq \epsilon^2|_Y$. Therefore $P_{s}^{2} \geq \epsilon^2 \bmod \Kom_X (V)$ and $P_s$ is a Fredholm family on $X \times [0,\pi/2]$ by Lemma \ref{fredholm-via-positivity}. Moreover, the restriction of $P$ to $Y \times [0,\pi/2]$ is invertible by Lemma \ref{invertibility-via-positivity}.
\end{proof}

\begin{lemma}\label{additiveinverse}
Let $(X,Y)$ be a paracompact pair. Then $F^{p,q}(X,Y)$ is a group, and the additive inverse to $[V,c,\iota,F]$ is $[V,-c,-\iota,F]$ or $[V,c,-\iota,-F]$.
\end{lemma}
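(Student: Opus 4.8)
The plan is to show that $F^{p,q}(X,Y)$ is a group by exhibiting an additive inverse for each class, since it is already an abelian monoid by construction. Fix a $(p,q)$-cycle $(V,c,\iota,F)$. The natural candidates for its inverse come from reversing orientation data: the opposite Clifford structure. I would first check that $(V,-c,-\iota,F)$ and $(V,c,-\iota,-F)$ are genuinely $(p,q)$-cycles. For the first, $F$ is still self-adjoint; it is Clifford-linear and odd with respect to the data $(-c,-\iota)$ precisely because $F$ anticommutes with $\iota$ and commutes with $c(x)$, and negating everything preserves these relations; Fredholmness and invertibility on $Y$ are unchanged. For the second, $-F$ is self-adjoint, anticommutes with $-\iota$ (same as $F$ anticommutes with $\iota$), and commutes with $c(x)$; again Fredholm and invertible on $Y$. (One also notes $(V,-c,-\iota,F) \cong (V,c,-\iota,-F)$ via multiplication by $\omega$ in the relevant degrees, or more elementarily they represent the same class, which is why both descriptions are offered.)

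Next I would compute the direct sum $(V,c,\iota,F) \oplus (V,-c,-\iota,F)$, which is a $(p,q)$-cycle on $(X,Y)$ with underlying bundle $V \oplus V$, Clifford data $c \oplus (-c)$ and $\iota \oplus(-\iota)$, and Fredholm family $F \oplus F$. The goal is to show this sum is acyclic, i.e.\ homotopic to an invertible $(p,q)$-cycle, hence zero in $F^{p,q}(X,Y)$. The key trick is a rotation argument: on $V \oplus V$ consider the family of self-adjoint homomorphisms
\[
P_s := \cos(s)\, (F \oplus F) + \sin(s)\, R,
\]
where $R$ is the off-diagonal involution $R = \twomatrix{0}{J}{J^*}{0}$ built from a Clifford-linear isometry $J:(V,-c,-\iota) \to (V,c,\iota)$ — but since the two summands have \emph{opposite} Clifford structures, the natural choice is $R = \twomatrix{0}{-1}{1}{0}$ or $\twomatrix{0}{1}{1}{0}$ arranged so that $R$ is odd, $\Cl^{p,q}$-linear, self-adjoint (or skew, adjusted by a sign), and \emph{anticommutes} with $F \oplus F$. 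The point is that $R$ is an invertible $\Cl^{p,q}$-Fredholm operator that anticommutes with $F\oplus F$; then Lemma~\ref{nullhomotopy-criterion} (with the roles of $F_0 = F\oplus F$, $F_1 = R$) does not immediately apply since we want to compare with $R$, but the explicit homotopy $P_s$ has $P_s^2 = \cos^2(s)(F\oplus F)^2 + \sin^2(s) R^2 + \sin(s)\cos(s)[F\oplus F, R] = \cos^2(s)(F\oplus F)^2 + \sin^2(s)$ because the anticommutator vanishes and $R^2 = 1$. Hence $P_s^2 \geq \epsilon_s^2 \bmod \Kom$ with $\epsilon_s$ bounded below away from zero (for $s$ near $\pi/2$ by the $\sin^2 s$ term, for $s$ near $0$ by Fredholmness of $F$, and in between by convexity), so $P$ is a $(p,q)$-Fredholm family on $X \times [0,\pi/2]$, invertible on $Y \times [0,\pi/2]$, connecting $F \oplus F$ to the invertible $R$. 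Therefore $[V,c,\iota,F] + [V,-c,-\iota,F] = [V\oplus V, \ldots, R] = 0$ in $F^{p,q}(X,Y)$.

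The main obstacle is getting the Clifford bookkeeping exactly right: one must produce an operator $R$ on $V \oplus V$ that is simultaneously (i) $\Cl^{p,q}$-linear for $c \oplus(-c)$, (ii) odd for $\iota \oplus (-\iota)$, (iii) self-adjoint, (iv) invertible with $R^2$ a positive multiple of the identity, and (v) anticommuting with $F \oplus F$. Requirements (i)–(ii) force $R$ to be off-diagonal and to intertwine the two opposite structures, which is exactly what makes such a $J$ exist (the identity map $V \to V$ is $\Cl^{p,q}$-linear and odd as a map $(V,-c,-\iota)\to(V,c,\iota)$ — check: it conjugates $-c(x)$ to... no; rather one uses that for the opposite structure the identity is Clifford \emph{anti}linear, so a small sign juggle with a single Clifford generator or with $\iota$ itself is needed). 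I expect to realize $R$ as $\twomatrix{0}{\iota}{\iota}{0}$ or $\twomatrix{0}{-\iota}{\iota}{0}$ and then verify (i)–(v) by direct computation using \eqref{clifford-relations} and the oddness/self-adjointness of $F$; anticommutativity (v) with $F\oplus F$ holds because conjugating $F$ by $\iota$ gives $-F$ (as $F$ is odd). Everything else — that direct sum descends to the monoid operation, that acyclic cycles are the identity, and Lemma~\ref{nullhomotopy-criterion}-style positivity estimates — is already in place, so once $R$ is pinned down the argument is short. For the claimed second form of the inverse, I would simply note $(V,c,-\iota,-F) \cong (V,-c,-\iota,F)$ via the Clifford automorphism $\omega_{p,q}$ (when it exists) or, in all cases, observe directly that the same rotation homotopy with $R$ replaced by $\twomatrix{0}{1}{1}{0}$ shows $[V,c,\iota,F]+[V,c,-\iota,-F]=0$.
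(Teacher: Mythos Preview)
Your proposal is correct and follows essentially the same approach as the paper: the paper takes $Q=\twomatrix{}{\iota}{\iota}{}$ (your $R$) for the first inverse and $Q=\twomatrix{}{1}{1}{}$ for the second, then applies Lemma~\ref{nullhomotopy-criterion}. One small point: you say Lemma~\ref{nullhomotopy-criterion} ``does not immediately apply,'' but in fact it does---since $Q$ anticommutes with $F\oplus F$, the anticommutator $[F\oplus F,Q]=0\geq 0$, so the lemma yields $[V\oplus V,\ldots,F\oplus F]=[V\oplus V,\ldots,Q]=0$ directly, without needing to re-run the rotation homotopy by hand.
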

\begin{proof}
We claim that $[V \oplus V,c \oplus -c,\iota\oplus -\iota,F\oplus F]=0$. The operator $Q:=\twomatrix{}{\iota}{\iota}{}$ is an odd, $\Cl^{p,q}$-linear involution that anticommutes with $F \oplus F$ and a straightforward application of Lemma \ref{nullhomotopy-criterion} concludes the proof. The second formula is proven in the same way with $Q= \twomatrix{}{1}{1}{}$.
\end{proof}

How do these new groups relate to $\KO$-Theory? 

\begin{theorem}\label{compact-comparison-theorem}
Let $(X,Y)$ be a compact pair. Then the obvious comparison map
\[
[(X,Y);(\Fred^{p,q} (H),\cG^{p,q} (H))] \to F^{p,q} (X,Y).
\]
is bijective.
\end{theorem}

Moreover, the functor $F^{p,q}$ is representable for paracompact pairs.

\begin{theorem}\label{paracompact-comparison-theorem}
There exists a space pair $(K^{p,q},D^{p,q})$ and a natural map 
\[
[(X,Y); (K^{p,q},D^{p,q})] \to F^{p,q} (X,Y)
\]
which is bijective whenever $X$ is paracompact and compactly generated and $Y \subset X$ closed. Moreover, the space $D^{p,q}$ is weakly contractible.
\end{theorem}

Theorems \ref{compact-comparison-theorem} and \ref{paracompact-comparison-theorem} are proven in the appendix \ref{appendix}.
From both theorems, one obtains a comparison map 
\[
(\Fred^{p,q} (H),\cG^{p,q} (H)) \to (K^{p,q},D^{p,q})
\]
and a formal consequence of the above results is that it is a weak homotopy equivalence (because $\Fred^{p,q} (H)$ and $\cG^{p,q} (H)$ are by definition metric spaces and therefore paracompact).
Let us have a look at Bott periodicity in this framework. We write $\bI:=[-1,1]$, $\partial \bI:=\{\pm 1 \}$ and
\[
\Omega F^{p,q} (X,Y) := F^{p,q} ((\bI, \partial \bI) \times (X,Y)).
\]
Let $p \geq 1$ and $(V,F)$ be a $(p,q)$-cycle over $(X,Y)$. Put $J:=( e_1 \iota)$. Define a $(p-1,q)$-cycle $\Bott(V,F)$ on $(\bI, \partial \bI) \times (X,Y)$ by the formula
\[
\Bott(F)_{(s,x)} := F_x + s J.
\]

The operator $J$ is self-adjoint and odd and hence so is $\Bott(F)_{(s,x)}$. Since $J$ anticommutes with each $F_x$, the formula $(\Bott(F)_{(s,x)})^2 = F_x^{2}+ s^2$ holds and $\Bott(F)_{(s,x)}$ is invertible if $s \neq 0$ or if $F$ is invertible. Observe that $\Bott(F)_{(s,x)}$ commutes with $\eps_1, \ldots, \eps_q$ and $e_2, \ldots,e_p$, but \emph{not} with $e_1$. When we define a $\Cl^{p-1,q}$-action by reindexing the $e_i$ generators, $\Bott (F)_{(s,x)}$ is $\Cl^{p-1,q}$-linear. This construction preserves the equivalence relation defining the $F$-groups, and we obtain a group homomorphism, the \emph{Bott map}
\begin{equation}\label{bott-map2}
\bott: F^{p,q}(X,Y) \to\Omega  F^{p-1,q}(X,Y).
\end{equation}

The Bott periodicity theorem states that (\ref{bott-map2}) is an isomorphism for all paracompact and compactly generated pairs; this follows from the main result of \cite{AS69}, together with Theorems \ref{compact-comparison-theorem} and \ref{paracompact-comparison-theorem}. Using the Morita equivalences, we obtain natural isomorphisms of functors
\[
\mor^{1,1}:F^{p,q} \cong F^{p+1,q+1}, \;\mor^{8,0}: F^{p,q} \cong F^{p+8,q}, \; \mor^{0,8}:F^{p,q} \cong F^{p,q+8}.
\]

The formulas for the Morita equivalences that we gave make it clear that these maps are compatible with the Bott maps whenever this statement makes sense, i.e. if $p>0$.
Another fundamental structure is a $\KO^{0}$-module structure on $F^{p,q}(X,Y)$. To define it, we use the model for $\KO^0 (X,Y)$ by complexes of vector bundles, see \cite[Part II]{ABS}. We can slightly reformulate this construction by saying:

\begin{definition-proposition}
Let $(X,Y)$ be a compact pair. Consider the monoid of concordance classes of $(0,0)$-cycles such that the underlying Hilbert bundle has finite rank and the same equivalence relations as in Definition \ref{defn-fpq}. The quotient group is naturally isomorphic to $\KO^0 (X,Y)$.
\end{definition-proposition}

The natural map $\KO^0 (X,Y) \to F^{0,0}(X,Y)$ is an isomorphism for compact pairs $(X,Y)$. For $[H,\iota,c,F] \in F^{p,q} (Z,W)$ and $[E,\eta,P] \in \KO^0 (X,Y)$, we define
\[
[H,\iota,c,F] \cdot [E,\eta,P]:= [H \otimes E, \iota \otimes \eta, c \otimes 1, F \otimes 1 + \iota \otimes P].
\]
It is easy to see that this definition yields a well-defined bilinear map
\[
F^{p,q}(Z,W) \times \KO^0  (X,Y) \to F^{p,q} ((Z,W)\times (X,Y)).
\]
If $(p,q)=(0,0)$, this defines the usual cross product on $\KO^0$ under the isomorphism from Theorems \ref{compact-comparison-theorem} and \ref{paracompact-comparison-theorem}. This product is compatible with Morita equivalences and Bott periodicity, in the sense that these maps are linear over $\KO^0$.

\section{Analytical arguments}\label{analysis}

\subsection{Preliminaries on elliptic theory}\label{elliptic-preliminaries}

In this subsection, we recollect some purely analytical results from the literature. It was written for the convenience of the readers with less background in analysis (including the author), and we also want to give a reference for further analytical results needed in \cite{BERW}, which is why we formulate the analysis in greater generality. 

Even though our main result only involve differential operators, the proof requires to use the bigger class of pseudo-differential operators. Let us first summarize the essential properties of pseudo-differential operators that we will use, following Atiyah and Singer \cite{AS1,ASIV}. Let $M$ be a closed manifold and let $E \to M$ be a smooth vector bundle.
In \cite[\S 5]{AS1}, a vector space $\pseudo{m} (E)$ of linear maps $\Gamma (M;E)\to \Gamma (M;E)$, called pseudo-differential operators of order $m \in \bZ$ (denoted $\mathcal{P}^m(M;E,E)$ in loc. cit.), is considered whose properties we briefly recall.
\begin{proposition}\label{pseudo-properties}\mbox{}
\begin{enumerate}
\item $\pseudo{m}(E) \subset \pseudo{m+1}(E)$ and $\bigcup_{m \in \bZ} \pseudo{m} (E)$ is a filtered algebra. Pseudo-differential operators have (formal) adjoints.
\item Pseudo-differential operators have symbols. Let $\pi: T^* M \setminus 0 \to M$ be the cotangent bundle without the zero section. The leading symbol of $P \in \pseudo{m}(E)$ is a smooth section $\symb_P $ of $\pi^* \Hom (E,E) \to T^* M \setminus 0$ which is \emph{positively homogeneous of degree $m$} in the sense that $\symb_P (t \xi)= t^m \symb_P (\xi)$, for all $0 \neq \xi \in T^* M$ and $t>0$. There is an associated notion of ellipticity. 
Moreover, $\symb_P  (\xi) \circ \symb_Q (\xi)= \symb_{P \circ Q} (\xi)$ and $\symb_{P^*} (\xi) = \symb_P (\xi)^*$. 
\item A differential operator of order $m$ is in $\pseudo{m} (E)$.
\item The inverse of an invertible elliptic $P \in \pseudo{m} (E)$ is in $\pseudo{-m}(E)$.
\item Convolution operators with smooth kernels lie in $\pseudo{-\infty} (E) = \bigcap_m \pseudo{m} (E)$.
\item Let $P \in \pseudo{m} (E)$. For each $s \in \bR$ and $m \leq r \in \bR$, $P$ induces a bounded operator of the $L^2$-Sobolev spaces $W^s (M;E) \to W^{s-r} (M;E)$, whose norm we denote by $\norm{P}_{s,s-r}$. 
\item There is a Fr\'echet space structure on $\pseudo{m} (E)$, defined in \cite[p. 123 f.]{ASIV}. The map $\pseudo{m} (E) \to \Lin (W^s (M;E) , W^{s-r} (M;E))$ is continuous for all $s$, $r \geq m$. 
\item Let $\Diff (M;E)$ be the group of pairs $(f, \hat{f})$, $f$ a diffeomorphism of $M$, and $\hat{f}$ is a smooth bundle isomorphism $E \to E$ covering $f$. This is a topological group with the topology described in \cite[p. 123]{ASIV}. The group $\Diff (M;E)$ acts linearly on $\pseudo{m}(E)$ by pulling back, and this action is continuous (i.e. $\Diff (M;E) \times \pseudo{m}(E) \to \pseudo{m}(E) $ is continuous), compare \cite[Proposition 1.3]{ASIV}.
\end{enumerate}
\end{proposition}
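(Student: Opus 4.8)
The plan is to treat the proposition as a compilation: all seven statements are, in substance, established in \cite{AS1} and \cite{ASIV}, and the task is to locate them and supply the handful of short arguments needed to bridge minor gaps.

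For items (1), (2), (4) and (5) I would quote \cite[\S 5]{AS1}, where the class $\pseudo{m}(E) = \mathcal{P}^m(M;E,E)$ is set up: the inclusions $\pseudo{m}(E) \subset \pseudo{m+1}(E)$ and the composition rule making $\bigcup_m \pseudo{m}(E)$ a filtered algebra, the formal adjoint (with respect to an $L^2$-structure from a metric on $M$ and a bundle metric on $E$), the principal symbol and the notion of ellipticity are all proved there. That a differential operator of order $m$ lies in $\pseudo{m}(E)$ is immediate from the local coordinate description; that a convolution operator with smooth kernel lies in $\pseudo{-\infty}(E) = \bigcap_m \pseudo{m}(E)$ follows because its local amplitude is Schwartz in the covariable. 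For (5) I would combine the standard $L^2$-continuity of order-zero operators (composed with powers of an invertible elliptic operator) with the trivial inclusion $\pseudo{m}(E)\subset\pseudo{r}(E)$ for $r\geq m$ to obtain the stated range of Sobolev exponents.

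The one item that needs an argument beyond a citation is (3). Given an invertible elliptic $P \in \pseudo{m}(E)$, I would first build a parametrix $Q \in \pseudo{-m}(E)$ with $PQ-1$ and $QP-1$ in $\pseudo{-\infty}(E)$ in the usual way, by inverting the principal symbol off the zero section and correcting by an asymptotic summation. Writing $P^{-1} = Q + P^{-1}(1 - PQ)$, the key point is that $1-PQ$ sends every Sobolev section to a smooth one and that $P^{-1}$ preserves smoothness by elliptic regularity (because $u = Q(Pu) - (QP-1)u$ has both summands smooth whenever $Pu$ is smooth); hence $P^{-1}(1-PQ)$ has a smooth kernel and lies in $\pseudo{-\infty}(E)$ by (4), so that $P^{-1} \equiv Q \bmod \pseudo{-\infty}(E) \subset \pseudo{-m}(E)$.

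Finally, (6) and (7) I would take directly from \cite{ASIV}: the Fr\'echet topology on $\pseudo{m}(E)$ is the one on \cite[p. 123 f.]{ASIV}, built from local symbol seminorms together with a smoothing remainder, and the continuity of $\pseudo{m}(E) \to \Lin(W^s(M;E), W^{s-r}(M;E))$ for $r \geq m$ is contained in the estimates proved there; the topology on $\Diff(M;E)$ and the continuity of the pull-back action $\Diff(M;E) \times \pseudo{m}(E) \to \pseudo{m}(E)$ are \cite[Proposition 1.3]{ASIV}, specialised from operators between two bundles to endomorphisms of one. I expect the only real work to be bookkeeping: reconciling Atiyah--Singer's conventions (operators between possibly distinct bundles, a fixed density on $M$) with the single-bundle endomorphism formulation used here, and verifying that the continuity statements in \cite{ASIV} are phrased in exactly the form required by (5) and (6). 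No step looks genuinely hard.
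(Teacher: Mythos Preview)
Your proposal is correct and in fact goes beyond what the paper does: the paper gives no proof of this proposition at all, treating it purely as a summary of known facts with the citations \cite{AS1,ASIV} embedded in the surrounding text and in the statement itself. Your short argument for (3) and your remarks on reconciling conventions are reasonable additions, but the paper simply takes all seven items as standard.
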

We have to consider pseudo-differential operators on noncompact manifolds as well, but only in a very restricted situation. Let $E \to M$ be a smooth vector bundle on a smooth manifold and let $U \subset M$ be an open and relatively compact subset. We only consider pseudo-differential operators $P:\Gamma_c (M;E) \to \Gamma_c (M;E)$ such that
\begin{enumerate}
\item $\supp (Pu) \subset U$ for all $u \in \Gamma_c (M;E)$ and
\item there is a function $\lambda \in C^{\infty}_{c} (U)$ such that $P((1-\lambda)u)=0$ for all $u \in \Gamma_c (M;E)$.
\end{enumerate}
The space of these operators is denoted $\pseudo{m}(E)_U$. It has all the properties listed above; the group $\Diff (M;E)$ needs to be replaced by the group $\Diff (M;E)_U$ of pairs $(f, \hat{f})$ such that $f$ is the identity outside $U$. A simple method to obtain the Fr\'echet space structure is to take a compact submanifold $U \subset K \subset M$ of codimension $0$ and to take the double $K'$ of $K$ with $E' \to K'$ the double of $E$. Extension by zero yields a linear embedding $\pseudo{m}(E)_U \to \pseudo{m}(E')$. We will consider the following specific situation.

\begin{assumptions}\label{analysis-input}\mbox{}
\begin{enumerate}
\item Let $N$ be a $d$-dimensional manifold, equipped with a proper smooth map $t: N \to \bR$ and let $E \to N$ be a smooth vector bundle. For a subset $A \subset \bR$, we denote $N_A:= t^{-1} (A)\subset N$ and $E_A= E|_{N_A}$. We assume the following conditions on $N$, $t$ and $E$. 
\item There are $(d-1)$-dimensional closed manifolds $M_{\pm}$, smooth vector bundles $V_{\pm} \to M_{\pm}$ and numbers $r_-<R_- < R_+ < r_+ \in \bR$ such that $N_-:=N_{(-\infty,R_-)}= (-\infty,R_-)\times M_-  $, $E_-:=E_{(-\infty,R_-)}=  (-\infty,R_-)\times V_-$ (as spaces over $(-\infty,R_-)$) and the analogous condition for $N_+:=N_{(R_+,\infty)}$ hold. We write $N_0 := N_{(r_-,r_+)}$ and $E_0:= E|_{N_0}$. The data so far introduced is called a \emph{manifold with cylindrical ends}, together with a vector bundle with cylindrical ends.
  \item A Riemannian metric $g$ on $N$ is \emph{cylindrical at the ends} if there are Riemannian metrics $g_{\pm}$ on $M_{\pm}$ such that $g = dt^2+g_{\pm} $ on $N_{\pm}$. Likewise, a bundle metric $h$ on $E$ is cylindrical at the ends if there are metrics $h_{\pm}$ on $V_{\pm}$ such that the equality $E_- = (-\infty,R_-) \times V_-$ and the analogous equality over $(R_+,+\infty)$ are equalites of Riemannian vector bundles.
\item Next, let $\sigma = (\sigma_-, \sigma_+)$, with $\sigma_{\pm}$ being a skew-adjoint endomorphism of $V_{\pm}$ such that $\sigma_{\pm}^{2} = -1$. 
\item The last piece of datum is a linear map $D: \Gamma_c (N;E) \to \Gamma_c (N;E)$ which is symmetric and has the following properties.
\item $D$ maps each of the spaces $\Gamma_c (N_0;E)$ and $\Gamma_c (N_{\pm}; E)$ to itself.
\item The restriction of $D$ to $\Gamma_c (N_0;E)$ is a formally self-adjoint first order elliptic pseudo-differential operator of order $1$.
\item There are \emph{invertible} formally self-adjoint first order elliptic pseudo-differential operators $B_{\pm}$ on $V_{\pm} \to M_{\pm}$ such that $\sigma_{\pm} B_{\pm} + B_{\pm} \sigma_{\pm}=0$ and such that the operators $D_{\pm} =\partial_t \sigma_{\pm}+ B_{\pm}$ on $\Gamma_c (N_{\pm}; E)$ coincide with the restriction of $D$ to $\Gamma_c (N_{\pm}; E)$. To understand this formula, note that an element in $\Gamma_c (N_{\pm};E)$ can be viewed as a smooth function $\bR \to \Gamma (M_{\pm};V_{\pm})$.
\end{enumerate}
\end{assumptions}

Note that at least in a formal sense, one can view $D$ as a pseudo-differential operator. Also, the operators $B_{\pm}$ are determined by $D$.
We denote $\cH := L^2(N;E)$, $\cW:= W^{1} (N;E)$, $H_{\pm}= L^2 (M_{\pm};V_{\pm})$ and $W_{\pm}=W^{1}(M_{\pm};V_{\pm})$.
The norm on $L^2 (N;E)=W^{0}(N;E)$ is given by the Riemannian metric on $N$ and the bundle metric on $E$. 
To define the Sobolev $1$-norm on compactly supported smooth sections $\Gamma_c (N;E)$, recall first for each relatively compact subset $U \subset N$, the Sobolev norm on $\Gamma_c (U;E)$ is unique up to equivalence of norms.
Let $\mu_+, \mu_-, \mu_0$ be smooth nonnegative functions on $\bR$ such that $\mu_- + \mu_+ + \mu_0=1$, such that $\mu_0$ is supported in $(r_-,r_+)$, $\mu_-$ is supported in $(-\infty,R_-)$ and $\mu_+$ is supported in $(R_+, \infty)$. 
To avoid heavy notation, we use the symbol $\mu_i$ for the function $\mu_i \circ t$ on $N$, which is supported in $N_i$ (and $\mu_0$ is compactly supported). The Sobolev $1$-norm $\norm{u}_{\cW}$ of $ u \in \Gamma_c (N; E)$ is defined by
\[
\| u \|_{\cW}^{2} := \| \mu_0 u\|_{W^1(N_0)}^{2} +  \int_{-\infty}^{R_-} (\| u(t)\|_{W_-}^{2} + \| \dot u (t)\|_{H_-}^{2} )dt + \int^{\infty}_{R_+}  (\| u(t)\|_{W_+}^{2} + \| \dot u (t)\|_{H_+}^{2} )dt
\]
and its equivalence class does not depend on the specific choice of the functions $\mu_i$.
\begin{proposition}\label{fredholmproperty:pointwise}\mbox{}
\begin{enumerate}
\item There are constants $C_1, C_2, C_3$, such that for all $u \in \cW$, the elliptic estimate holds:
\begin{equation}\label{elliptic-estimate}
\|u \|_{0} \leq C_1 \| u \|_{1} \leq C_2 (\|\mu_0 u\|_{0} + \| Du\|_{0}) \leq C_3 (\| u\|_{0} + \| Du\|_{0}).
\end{equation}
\item The operator $D$ with $\dom (D)=\cW$ is a self-adjoint (unbounded) Fredholm operator. 
\end{enumerate}
\end{proposition}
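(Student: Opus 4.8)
The first inequality in \eqref{elliptic-estimate} is nothing but the definition of $\|\cdot\|_{\cW}$, and the third follows at once from $0\le\mu_0\le 1$; the entire content is the middle inequality $\|u\|_1\le C_2(\|\mu_0 u\|_0+\|Du\|_0)$, which I would establish for $u\in\Gamma_c(N;E)$ (dense in $\cW$, and both sides depend continuously on $u$ in the $\cW$-norm since $D$ has order one) by a partition-of-unity argument tailored to the ends of $N$. Because the equivalence class of $\|\cdot\|_{\cW}$ does not depend on the cut-offs $\mu_i$, I may arrange $\mu_0>0$ throughout $(r_-,r_+)$, so that $\mu_0\ge\delta>0$ on any prescribed compact subinterval. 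Fix functions of $t$ alone $1=\phi_-+\phi_0+\phi_+$ with $\phi_0\equiv 1$ on $[R_-,R_+]$, with $\supp\phi_0\subset[a,b]$ for a compact $[a,b]\subset(r_-,r_+)$, with $\supp\phi_\pm$ contained in the open cylindrical ends $N_\pm$, and --- the crucial point --- with $\supp\dot\phi_\pm\subset[a,b]$; then $\|\dot\phi_\pm u\|_0\le\delta^{-1}\|\mu_0 u\|_0$. By the triangle inequality $\|u\|_{\cW}\le\sum_i\|\phi_i u\|_{\cW}$, so I may treat the three summands separately.

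For the cylindrical pieces I would use that on $\supp\phi_+$ the operator $D$ equals $D_+=\partial_t\sigma_++B_+$ (Assumption~\ref{analysis-input}(9)). From $\sigma_+^2=-1$, $B_+^*=B_+$ and $\sigma_+ B_++B_+\sigma_+=0$ one gets the operator identity $D_+^2=-\partial_t^2+B_+^2$ on $\Gamma_c(N_+;E)$. Applied to $v:=\phi_+ u$, for which the boundary term at $t=R_+$ in $\langle-\partial_t^2 v,v\rangle_{L^2(N_+)}=\|\partial_t v\|^2$ vanishes ($\phi_+$ is supported away from $R_+$, $v$ is compactly supported), this gives $\|B_+ v\|_{L^2(N_+)}\le\|D_+ v\|_{L^2(N_+)}$. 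Since $B_+$ is invertible and elliptic on the closed manifold $M_+$, $\|w\|_{W_+}\le C\|B_+ w\|_{H_+}$; combining this with $\partial_t v=\sigma_+ B_+ v-\sigma_+ D_+ v$ and feeding everything into the defining formula for $\|\cdot\|_{\cW}$ yields $\|\phi_+ u\|_{\cW}\lesssim\|D(\phi_+ u)\|_0$. As $\phi_+$ commutes with $B_+$, one has $[D,\phi_+]u=\sigma_+\dot\phi_+ u$, whence $\|D(\phi_+ u)\|_0\le\|Du\|_0+\|\dot\phi_+ u\|_0\le\|Du\|_0+\delta^{-1}\|\mu_0 u\|_0$; the term $\phi_- u$ is treated symmetrically. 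This is the step where invertibility of $B_\pm$ is indispensable, and it is clean.

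For the central piece $\phi_0 u$, which is compactly supported in the relatively compact open set $N_0$ where $D$ restricts to an elliptic operator $D|_{N_0}\in\pseudo{1}(E)_{N_0}$, I would extend $D|_{N_0}$ to an elliptic operator on a closed manifold and invoke the classical elliptic estimate $\|\phi_0 u\|_{W^1}\lesssim\|D(\phi_0 u)\|_0+\|\phi_0 u\|_0$, the last term being $\le\delta^{-1}\|\mu_0 u\|_0$. It then remains to control $[D,\phi_0]u=D(\phi_0 u)-\phi_0 Du$, and here the non-locality of pseudo-differential operators is the real nuisance: a crude bound produces the global norm $\|u\|_0$, which cannot be absorbed into $\|\mu_0 u\|_0$. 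I would remedy this with a second $t$-cut-off $\psi\equiv 1$ on a neighbourhood of $\supp\phi_0$ and with $\supp\psi$ a compact subinterval of $(r_-,r_+)$. Then $\phi_0 D\bigl((1-\psi)u\bigr)=0$: the section $(1-\psi)u$ is supported in $N_-\sqcup N_+$, where $D$ acts through the operators $D_\pm$, which are local in the $t$-direction (only $B_\pm$ is pseudo-differential, and it acts fibrewise over $M_\pm$), so its $D$-image is $t$-supported away from $\supp\phi_0$. Consequently $[D,\phi_0]u=[D|_{N_0},\phi_0](\psi u)$; since $[D|_{N_0},\phi_0]\in\pseudo{0}(E)_{N_0}$ is $L^2$-bounded and $\psi u$ is supported where $\mu_0$ is bounded below, $\|[D,\phi_0]u\|_0\lesssim\|\mu_0 u\|_0$. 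Assembling the three bounds proves \eqref{elliptic-estimate}. I expect this bookkeeping --- forcing the pseudo-differential commutator error onto $\supp\mu_0$ by exploiting that $D$ is local on the ends (Assumptions~\ref{analysis-input}(6) and (9)) --- to be the main obstacle.

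For part (2): $D$ is symmetric on $\cW$ by density and boundedness of $D\colon\cW\to\cH$, and by \eqref{elliptic-estimate} the graph norm of $D$ is equivalent to $\|\cdot\|_{\cW}$, so $D$ is a closed operator. Self-adjointness reduces to the inclusion $\{v\in\cH: Dv\in\cH\ \text{distributionally}\}\subset\cW$, which is proved by elliptic regularity: interior regularity of $D|_{N_0}$ on the compact part, and on each end the equation becomes the first-order ODE $\partial_t v=A_\pm v-\sigma_\pm w$ with generator $A_\pm:=\sigma_\pm B_\pm$ self-adjoint and invertible ($A_\pm^*=A_\pm$, $A_\pm^2=B_\pm^2\ge c^2$), whose $L^2$-solutions lie in $W^1$ precisely because $0\notin\spec(A_\pm)$ --- again invertibility of $B_\pm$. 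Finally $u\mapsto\mu_0 u$ is a compact operator $\cW\to\cH$ ($\mu_0$ has compact support, and the Rellich lemma applies on the relatively compact set $N_{\supp\mu_0}$), so \eqref{elliptic-estimate} in the form $\|u\|_{\cW}\lesssim\|\mu_0 u\|_{\cH}+\|Du\|_{\cH}$ forces $\ker D$ to be finite-dimensional and $\im D$ closed; as $D$ is self-adjoint, $\operatorname{coker} D\cong\ker D$, so $D$ is Fredholm.
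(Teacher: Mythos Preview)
Your approach is essentially the same as the paper's: prove the middle estimate separately on the two ends (using $\|D_{\pm}v\|^{2}=\|\partial_t v\|^{2}+\|B_{\pm}v\|^{2}$ and invertibility of $B_{\pm}$) and on the compact core (G{\aa}rding), then patch with a partition of unity; for part~(2) invoke elliptic regularity for self-adjointness and Rellich plus the estimate for the Fredholm property. The paper outsources the patching step to \cite[Lemma~3.7]{RoSa}, whereas you work it out explicitly; in particular you face squarely the point the paper leaves implicit, namely that $D$ is only pseudo-differential on $N_0$, so the commutator $[D,\phi_0]$ is not a priori controlled by $\|\mu_0 u\|_0$. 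Your remedy --- inserting a second cut-off $\psi$ with $\psi\equiv 1$ near $\supp\phi_0$ and using that $D$ is $t$-local on the ends (Assumption~\ref{analysis-input}(6),(9)) to kill $\phi_0 D\bigl((1-\psi)u\bigr)$ --- is correct and is exactly the kind of bookkeeping hidden behind the paper's reference.
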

\begin{proof}
The first and third inequality in (\ref{elliptic-estimate}) are clear. For the second inequality, let $u $ be supported in $N_+$. Then
\[
\| Du \|^{2}_{\cH} = \int_{R_+}^{\infty} \| B_+ u (t) \|_{H_+}^{2} + \| \dot u(t)\|_{H_+}^{2} dt
\]
since $B_+$ and $\sigma_+$ anticommute. Since $B_+$ is invertible, there is a constant $c >0$ such that $\| B_+ u (t) \|_{H_+}^{2} \geq c \| u (t) \|_{W_+}^{2}$. Altogether, we obtain $\|u\|_{\cW} \leq C \| Du\|_{\cH}$ for such sections. A similar estimate holds for sections supported in $N_-$. For sections supported in $N_0$, the usual elliptic estimate (G{\aa}rdings inequality) holds. One patches all these estimates together by an argument similar to that for \cite[Lemma 3.7]{RoSa}.

The proof for essential self-adjointness is the same as that for differential operators given in \cite[Lemma 10.2.1, 10.2.5, Proposition 10.2.10]{HR}.
The elliptic estimate, Rellich's theorem and a general fact \cite[Lemma 3.5]{RoSa} imply that $D: \cW \to \cH$ has closed image and finite-dimensional kernel. Since $D$ is self-adjoint, the orthogonal complement of the image of $D$ is the same as the kernel, and so $D$ is Fredholm.
\end{proof}

\subsection{Families of pseudo-differential operators with cylindrical ends}

We will, very crucially for our purposes, study families of pseudo-differential operators on bundles of manifolds. Let us first state precisely what we mean by this term. Let $N,t,M_{\pm},E,V_{\pm}$ be as in Assumption \ref{analysis-input}, but without metrics and choose functions $\mu_i$ as above. 
Let $\fF$ be the set of all tuples $(g,h,\sigma,D)$ as in Assumption \ref{analysis-input}. The assignment $(g,h,\sigma,D) \mapsto (g,h,\sigma_-, \sigma_+, \mu_0 D_0 \mu_0, B_-,B_+)$ is an injective map from $\fF$ to the Fr\'echet space
\[
\begin{split}
\Gamma (N; \Sym^2 T^*N) \times \Gamma (N; \Sym^2 E^*) \times \Gamma (M_-; \End (V_-) )\times \Gamma (M_+; \End (V_+)) \times \\
\times \pseudo{1} (N;E)_{N_0} \times \pseudo{1} (M_-;V_-) \times  \pseudo{1} (M_+;V_+).
\end{split}
\]
We equip $\fF$ with the subspace topology induced by this map.
Let $G$ be the topological group of all pairs $(f,\hat{f})$, where $f: N \to N$ is a diffeomorphism which is the identity on $N_-\cup N_+$ and $\hat{f}: E \to E$ is a bundle automorphism covering $f$ such that on the parts $E_{\pm}$, $\hat{f}$ is constant in the $\bR$-direction. This group has the $C	^{\infty}$-topology, and it acts on the space $\fF$ by pullbacks. This action is continuous, by \cite[Proposition 1.3]{ASIV}.

Let $X$ be a paracompact space and $Q \to X$ be a $G$-principal bundle. Let $Z := Q \times_G N \to X$; this is a bundle of manifolds ``with cylindrical ends'' (note that the ``end bundle'' is trivial). On the total space $Z$, there is an associated vector bundle $Q \times_G E \to Z$. We denote the fiber of $Z \to X$ over $x$ by $Z_x$ and the restriction of $Q \times_G E$ to $Z_x$ by $E_x$. 

\begin{definition}\label{pseudodifferential-family}
Let $Z \to X$ be as above. A \emph{family of formally self-adjoint order one elliptic pseudo-differential operators with cylindrical ends and invertible at infinity} is a section of the fiber bundle $Q \times_G \fF \to X$. 
\end{definition}

Note that such a family induces Riemannian metrics on the fiber $Z_x$ and bundle metrics on the vector bundles $E_x \to Z_x$. 
Therefore, we can form the Hilbert spaces $L^2 (Z_x; E_x)$ of $L^2$-sections. We now show how to arrange all these Hilbert spaces in a Hilbert bundle. To this end, we first consider the case of topologically trivial bundles.

Let $\fM$ be the space of all $(g,h)$ as in Assumption \ref{analysis-input}, let $(g_0,h_0) \in \fM$ be a basepoint and let $L^2 (N;E)_{g,h}$ be the Hilbert space of sections with respect to the inner product induced by the Riemannian metric $g$ and the bundle metric $h$. We want to define a Hilbert bundle structure on the disjoint union
\[
\coprod_{(g,h)\in \fM} L^2 (N;E)_{g,h} \to \fM
\]
(which does not yet have a topology). This is based on the following observation.

\begin{observation}\label{gauging-lemma}
Let $V$ be a finite-dimensional real vector space and $h_0$ be an inner product on $V$. For each other inner product $h$ on $V$, there exists a unique $h_0$-self-adjoint and positive definite endomorphism $\tau_{h_0,h}$ of $V$ such that 
\[
h (v,w)=h_0 (\tau_{h_0,h} v, \tau_{h_0,h} w)
\]
for all $v, w \in V$. Since $\tau_{h_0,h}$ depends smoothly on $h_0$ and $h$, this carries over to the bundle situation.

Similarly, if $N$ is a manifold and $g_0$ a Riemannian metric on $N$, then for any other Riemannian metric $g$ on $N$, let $\alpha_{g_0,g}: N \to (0, \infty)$ be the function with $\alpha_{g_0,g}^{2} \vol_{g_0} = \vol_g$, where $\vol_g$ denotes the volume measure induced by $g$. The function $\alpha_{g_0,g}$ depends smoothly on the choice of $g_0$ and $g$. 

Let $E \to N$ be a real vector bundle. Let $L^2 (N;E)_{g,h}$ be the Hilbert space of sections with respect to the inner product induced by the Riemannian metric $g$ and the bundle metric $h$. 
Then for each $s,t \in \Gamma_c (N;E)$, we have 
\[
\langle s,t \rangle_{g,h} = \int_N h( s, t ) d \vol_g = \int_N h_0 (\tau_{h_0,h} s, \tau_{h_0,h} t) \alpha_{g_0,g}^{2} d \vol_{g_0} = \langle  \alpha_{g_0,g}\tau_{h_0,h} s,   \alpha_{g_0,g} \tau_{h_0,h} t\rangle_{g_0,h_0} .
\]
In other words, $s \mapsto \alpha_{g_0,g} \tau_{h_0,h} s$ defines an isometry 
\[
T_{(g,h),(g_0,h_0)}:L^2 (N;E)_{g,h} \to L^2 (N;E)_{g_0,h_0}. 
\]
\end{observation}

Now let
\[
T= \coprod_{(g,h)\in \fM} T_{(g,h),(g_0,h_0)}: \fL=\coprod_{(g,h)\in \fM} L^2 (N;E)_{g,h} \to \fM \times L^2 (N;E)_{g_0,h_0};
\]
this is bijective and fiberwise an isometry. We \emph{declare} $T$ to be a homeomorphism; this defines a bundle structure on $\fL \to \fM$.
The group $G$ acts on $\fM$ by pullbacks; the precise formula is $(f,\hat{f}) \cdot (g,h) := ((f^{-1})^* g, (\hat{f}^{-1})^* h)$. This action is covered by an isometric action of $G$ on the Hilbert bundle $\fL$: define 
\[
\widetilde{(f, \hat{f})} : L^2 (N;E)_{(g,h)} \to L^2 (N;E)_{((f^{-1})^* g, (\hat{f}^{-1})^* h)}
\]
by 
\[
s \mapsto \hat{f} (s(f^{-1}(x))).
\]
It is straightforward to check that this defines an action of $G$ on $\fL$, and that $\widetilde{(f, \hat{f})}$ is an isometry.
We obtain a Hilbert bundle 
\[
Q \times_G \fL \to Q \times_G \fM,
\]
and given a section of $Q \times_G \fM \to X$ (i.e. a choice of metrics on the fibers $Z_x$ and $E_x$), we can pull back this bundle to a Hilbert bundle on $X$. To avoid heavy notation, we write $\cH \to X$ for this Hilbert bundle. Inside $\cH_x$, there is the Sobolev space, denoted $\cW_x$. 

Given a family of pseudo-differential operators as in Definition \ref{pseudodifferential-family}, we can form the individual self-adjoint unbounded Fredholm operators $D_x: \cW_x \to \cH_x$ (using Proposition \ref{fredholmproperty:pointwise}). Using the functional calculus, we may form the bounded transforms $\normalize{D_x}$, which are bounded and self-adjoint Fredholm operators. Altogether, we obtain a collection $x \mapsto \normalize{D_x}$ of self-adjoint Fredholm operators on the fibers of $\cH \to X$. We wish to show that this is a Fredholm family.

\begin{proposition}\label{fredholmproperty:familywise}
The family $x \mapsto \normalize{D_x}$ is a self-adjoint Fredholm family (in the sense of Definition \ref{definition:fredholmfamily}) on the Hilbert bundle $\cH \to X$. Moreover, the family has the stronger continuity condition described in Lemma \ref{local-to-global-fredholm-families}.
\end{proposition}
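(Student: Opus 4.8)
The statement is local in $X$ (by Lemma~\ref{local-to-global-fredholm-families}, together with Lemma~\ref{gluing-lemma-fredholm} and the paracompactness of $X$), so I would fix $x_0 \in X$, trivialize $Q$ over a neighborhood $U$, and reduce to showing that on $U$ the assignment $x \mapsto \normalize{D_x}$ is given, in a suitable trivialization of $\cH|_U \cong U \times \cH$, by a norm-continuous map $U \to \Lin(\cH)$ taking values in the self-adjoint Fredholm operators. The pointwise Fredholm property is exactly Proposition~\ref{fredholmproperty:pointwise}(2) (combined with the standard fact that $D$ self-adjoint Fredholm unbounded $\Rightarrow$ $\normalize{D}$ bounded self-adjoint Fredholm, via $1 - (\normalize{D})^2 = (1+D^2)^{-1}$ being compact by Rellich plus the elliptic estimate \ref{elliptic-estimate}). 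So the whole content is the \emph{continuity} of $x \mapsto \normalize{D_x}$ in the operator norm.

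**Reduction to continuity of the resolvent.** By the functional calculus, $\normalize{D} = D(1+D^2)^{-1/2}$, and it suffices to prove that $x \mapsto (1+D_x^2)^{-1} \in \Lin(\cH)$ is norm-continuous, because $D_x(1+D_x^2)^{-1/2}$ can be written as an integral $\frac{1}{\pi}\int_0^\infty D_x(1+\lambda+D_x^2)^{-1}\lambda^{-1/2}\,d\lambda$ over resolvents, and norm-continuity of resolvents implies norm-continuity of any bounded Borel function of $D$ that vanishes at infinity (this is a standard fact; the cleanest route is to note $\normalize{\cdot}$ is a $C_0$-function, so $\normalize{D_x} - \normalize{D_{x'}}$ is controlled by $\|(i+D_x)^{-1} - (i+D_{x'})^{-1}\|$ via a Stone--Weierstrass/Cauchy-integral argument). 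So the key estimate I aim for is:
\[
\bigl\| (i+D_x)^{-1} - (i+D_{x'})^{-1} \bigr\|_{\cH \to \cH} \longrightarrow 0 \quad \text{as } x' \to x.
\]
Writing the resolvent difference as $(i+D_x)^{-1}(D_{x'}-D_x)(i+D_{x'})^{-1}$, and noting that $(i+D_{x'})^{-1}$ maps $\cH$ into $\cW$ boundedly with norm controlled by the elliptic estimate, it is enough to show that the ``coefficient difference'' $D_{x'} - D_x$, viewed as a map $\cW \to \cH$, has operator norm tending to $0$ as $x' \to x$. Here one must be careful: $D_{x'}$ and $D_x$ act on sections of possibly different bundles $Z_{x'}, Z_x$, so one first transports everything to a fixed model using the local trivialization of $Q$; after that transport, $D_{x'} - D_x$ becomes a genuine difference of operators on a fixed Hilbert/Sobolev pair.

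**The core estimate.** After trivializing, the family is described, via the injection $\fF \hookrightarrow (\text{Fréchet space})$ of the excerpt, by continuously varying data: a metric $g_x$, a bundle metric $h_x$, the endomorphisms $\sigma_{\pm}$ (these are actually constant in $x$ on the ends, since $G$ acts trivially on the ends), an interior pseudodifferential part $\mu_0 D_{0,x} \in \pseudo{1}(N;E)_{N_0}$, and the boundary operators $B_{\pm,x} \in \pseudo{1}(M_\pm;V_\pm)$, all depending continuously on $x$ in the respective Fréchet topologies. I would split $D_{x'} - D_x$ using the partition of unity $\mu_0, \mu_+, \mu_-$: on $N_0$, continuity of $x \mapsto \mu_0 D_{0,x}$ in $\pseudo{1}$ and Proposition~\ref{pseudo-properties}(6) (continuity of $\pseudo{1}(E) \to \Lin(W^1, W^0)$) give $\|\mu_0(D_{x'}-D_x)\|_{W^1(N_0) \to L^2} \to 0$; on the ends $N_\pm$, since $D_{\pm,x} = \partial_t \sigma_\pm + B_{\pm,x}$ and only $B_{\pm,x}$ varies, continuity of $x \mapsto B_{\pm,x}$ in $\pseudo{1}(M_\pm;V_\pm)$ and again Proposition~\ref{pseudo-properties}(6) give $\|B_{\pm,x'} - B_{\pm,x}\|_{W_\pm \to H_\pm} \to 0$, hence $\|D_{x'}-D_x\|_{\cW \to \cH} \to 0$ by the definition of the $\cW$-norm. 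Finally one assembles: combine with the uniform (in a neighborhood) bound $\|(i+D_{x'})^{-1}\|_{\cH \to \cW} \le C$ — which follows from the elliptic estimate \ref{elliptic-estimate}, whose constants $C_1, C_2, C_3$ can be chosen locally uniformly because they depend only continuously on the data — to conclude norm-continuity of the resolvent, hence of $x \mapsto \normalize{D_x}$, hence that it is a Fredholm family by Lemma~\ref{local-to-global-fredholm-families}.

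**Main obstacle.** The genuinely delicate point is the transport step: making precise that after using the local trivialization of the $G$-bundle $Q$, the operators $D_{x'}$ for $x'$ near $x$ can all be regarded as unbounded operators on \emph{one} fixed Hilbert space $\cH_{x_0}$ with \emph{one} fixed form domain $\cW_{x_0}$, in such a way that the difference $D_{x'} - D_x$ is a bounded operator $\cW_{x_0} \to \cH_{x_0}$ with norm controlled by the Fréchet-distance of the defining data. This requires knowing that the $G$-action on $\cH$ and on $\cW$ is by bounded operators depending continuously (in the appropriate operator topologies) on the group element — which is where Proposition~\ref{pseudo-properties}(7) on the continuity of the $\Diff(M;E)$-action enters — and that conjugating $D_x$ by such a group element changes the metric/coefficient data continuously in the Fréchet topology of $\fF$. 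Once this bookkeeping is set up, everything else is the routine resolvent-difference manipulation sketched above; the argument is, as the authors indicate, ``similar to that for \cite[Lemma 3.7]{RoSa}'' patched over the pieces $N_0, N_+, N_-$.
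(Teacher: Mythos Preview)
Your overall architecture matches the paper's: localize, reduce to norm-continuity of $x\mapsto\normalize{D_x}$, and establish this via the estimate $\|(D_{x'}-D_x)u\|_{\cH}\le c\|u\|_{\cW}$ with $c\to 0$, splitting over $N_0,N_+,N_-$ and invoking the Fr\'echet continuity of the data together with Proposition~\ref{pseudo-properties}(6). The paper packages the passage from this estimate to norm-continuity of the bounded transform by quoting Nicolaescu's Riesz-convergence criterion (Lemma~\ref{nicolaescu-lemma}) rather than your hand-rolled resolvent/Cauchy-integral argument, but these are interchangeable.

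There is, however, a genuine gap in your ``transport step''. After you trivialize $Q$ over $U$, the family is a continuous map $U\to\fF$, and the $G$-action plays no further role; Proposition~\ref{pseudo-properties}(7) is not the tool you need here. The remaining problem is that the fibre $\cH_x=L^2(N;E)_{g_x,h_x}$ still depends on $x$ through the \emph{varying metrics} $g_x,h_x$, so $D_{x'}$ and $D_x$ are symmetric with respect to different inner products and your difference $D_{x'}-D_x$ is not yet a map between fixed spaces. The paper resolves this by a separate ``gauging trick'' (Observation~\ref{gauging-lemma}): the explicit isometries $T_{(g,h),(g_0,h_0)}:L^2(N;E)_{g,h}\to L^2(N;E)_{g_0,h_0}$ conjugate the whole family to one over $\fF_{g_0,h_0}$ where the metrics are fixed, and only then does your core estimate apply cleanly. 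You have correctly flagged this as the main obstacle but misattributed its resolution.

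One small correction: $\sigma_{\pm}$ is part of the data in $\fF$ and does vary with $x$ (the paper's estimate explicitly includes terms $\norm{\sigma_{\pm,n}-\sigma_{\pm}}$); the fact that $G$ fixes the ends does not freeze $\sigma_{\pm}$, it only means the bundle trivialization is trivial there.
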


The proof is carried out in three steps. The overall structure is best understood in a formal context. There is a tautological bundle $\fF \times N \to \fF$, and this carries a tautological family of pseudo-differential operators. For a given Riemannian metric $g$ on $N$ and bundle metric $h$ on $E$ (always satisfying the conditions from Assumption \ref{analysis-input}), we denote by $\fF_{g,h} \subset \fF$ the subspace of tuples $(g,h,\sigma,D)$. 

The first step of the proof is to prove the result for the tautological family over $\fF_{g,h}$. In that case, the Hilbert bundles $\cH$ is trivialized, and the domains of all the unbounded operators are equal.
The second step is to prove the result for the tautological family over $\fF$, which is reduced to the first step by a ``gauging trick''. The third step is to use general nonsense to reduce the case of arbitrary bundles to the tautological family.
For the first step, we use the following criterion by Nicolaescu for the continuity of the bounded transform of a family of unbounded operators.
\begin{lemma}\label{nicolaescu-lemma}\cite[Proposition 1.7]{Nic}
Assume that $\cH$ is a Hilbert space, $D_n$, $n \in \bN$ and $D$ are self-adjoint unbounded operators on $\cH$. Assume that
\begin{enumerate}
\item $D$ has a spectral gap.
\item The domains of all the operators $D_n - D$ contain the domain of $D$.
\item There is a sequence of positive numbers $c_n \to 0$ with $\|( D-D_n) u\| \leq c_n (\| u\| + \| D u\|)$ for all $u \in \dom (D)$.
\end{enumerate}
Then $D_n \to D$ is ``Riesz convergent'', which by definition \cite[\S 1]{Nic} means that $\normalize{D_n} \to \normalize{D}$ is norm convergent. 
\end{lemma}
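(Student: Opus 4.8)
The plan is to push everything down to bounded operators — via the resolvents and the bounded transform $D\mapsto\normalize{D}$ — and to reduce the statement to one genuinely delicate point: the norm convergence of the positive spectral projections of the $D_n$.

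First I would rephrase (2)--(3). Put $R_n:=(D_n-D)|_{\dom D}$; by (2) it is defined and symmetric on $\dom D$, and by (3) it is $D$-bounded with relative bound $c_n\to 0$, equivalently $R_n(1+D^2)^{-1/2}$ is bounded with norm $O(c_n)$. For large $n$, Kato--Rellich makes $D+R_n$ self-adjoint on $\dom D$, and since $D_n$ is a self-adjoint extension, $\dom D_n=\dom D$ and $D_n=D+R_n$. The resolvent identity $(D_n\pm i)^{-1}-(D\pm i)^{-1}=-(D_n\pm i)^{-1}R_n(D\pm i)^{-1}$ together with $\|(D_n\pm i)^{-1}\|\le 1$ and $\|D(D\pm i)^{-1}\|\le 1$ yields $\|(D_n\pm i)^{-1}-(D\pm i)^{-1}\|\le 2c_n$, so $D_n\to D$ in the norm resolvent sense; hence $(1+D_n^2)^{-1}=(D_n-i)^{-1}(D_n+i)^{-1}\to(1+D^2)^{-1}$ in norm, and $f(D_n)\to f(D)$ in norm for every $f\in C_0(\bR)$ (the $f$ with this property form a norm-closed $*$-subalgebra of $C_0(\bR)$ containing $(x\pm i)^{-1}$).

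Next I would bring in (1). Say $\spec(D)\cap(-2\delta,2\delta)=\emptyset$ — a gap located elsewhere reduces to this by translating $D$, which changes the transform only by a $C_0$-function of $D$. For $u\in\dom D$, $\|D_nu\|\ge\|Du\|-\|R_nu\|\ge\bigl((1-c_n)2\delta-c_n\bigr)\|u\|\ge\delta\|u\|$ for large $n$, so for large $n$ the $D_n$ have a spectral gap of radius $\ge\delta$ about $0$. Writing $b(x):=x(1+x^2)^{-1/2}$ (so $\normalize{D}=b(D)$), we have $b(D)^2=1-(1+D^2)^{-1}$ and $b(D_n)^2=1-(1+D_n^2)^{-1}$, both with spectra inside a fixed interval $[\beta^2,1]$, $\beta=\delta/\sqrt{1+\delta^2}$, because $x^2/(1+x^2)$ is bounded below on the spectra. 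By the previous paragraph $b(D_n)^2\to b(D)^2$ in norm, and $t\mapsto t^{1/2}$ is operator-Lipschitz on $[\beta^2,1]$ (e.g.\ from $A^{1/2}=\tfrac1{2\pi i}\oint\sqrt z\,(z-A)^{-1}dz$ over a contour around $[\beta^2,1]$ avoiding $(-\infty,0]$, plus the resolvent identity), so $|b(D_n)|:=(b(D_n)^2)^{1/2}\to|b(D)|$ in norm. Since $b(D)=\sign(D)\,|b(D)|$ and $b(D_n)=\sign(D_n)\,|b(D_n)|$ ($0$ being in neither spectrum), everything reduces to proving $\sign(D_n)\to\sign(D)$ in norm.

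For the sign: the gap makes $|D|^{-1}=(D^2)^{-1/2}$ bounded, and $D_n|D|^{-1}=\sign(D)\,|D|\,|D|^{-1}+R_n|D|^{-1}=\sign(D)+R_n|D|^{-1}$, so $\|D_n|D|^{-1}-\sign(D)\|=\|R_n|D|^{-1}\|\le c_n(\tfrac1{2\delta}+1)\to 0$: the bounded invertible operator $D_n|D|^{-1}$ converges in norm to the unitary $\sign(D)$. In particular $P_n:=\bigl((D_n|D|^{-1})^{*}(D_n|D|^{-1})\bigr)^{1/2}=(|D|^{-1}D_n^2|D|^{-1})^{1/2}\to 1$ in norm, and with the polar decomposition $D_n|D|^{-1}=W_nP_n$ ($W_n$ unitary) we get $\|W_n-\sign(D)\|\le\|W_n(1-P_n)\|+\|D_n|D|^{-1}-\sign(D)\|\to 0$. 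Since also $(|D_n|\,|D|^{-1})^{*}(|D_n|\,|D|^{-1})=|D|^{-1}D_n^2|D|^{-1}=P_n^2$, one has $|D_n|\,|D|^{-1}=V_nP_n$ with $V_n$ unitary; comparing with $D_n|D|^{-1}=\sign(D_n)\bigl(|D_n|\,|D|^{-1}\bigr)$ (from $D_n=\sign(D_n)|D_n|$) and cancelling the invertible $P_n$ gives $\sign(D_n)=W_nV_n^{*}$. The main obstacle — and the only place where (3) is used beyond norm resolvent convergence — is to prove $V_n\to 1$ in norm, i.e.\ $|D_n|\,|D|^{-1}\to 1$: that the small relative perturbation $R_n$ asymptotically preserves the modulus, equivalently that the negative spectral subspace of $D_n$ cannot leak past the gap into the positive subspace of $D$. (This is transparent for multiplication operators, where $|D_n|\,|D|^{-1}$ is literally positive with spectrum in $[1-O(c_n),1+O(c_n)]$; for general $D$ it comes from a perturbation estimate for the spectral projections $\chi_{(0,\infty)}(D_n)$, cf.\ \cite{Nic}.) Granting $V_n\to 1$, $\sign(D_n)=W_nV_n^{*}\to\sign(D)$, and then $b(D_n)=\sign(D_n)|b(D_n)|\to\sign(D)|b(D)|=b(D)$, as claimed.
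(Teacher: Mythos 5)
The paper does not actually prove this lemma: it is quoted as Nicolaescu's Proposition 1.7 from \cite{Nic}, so there is no internal proof to measure yours against. Judged on its own terms, your argument has a genuine gap, and it sits exactly at the heart of the statement. The reductions you carry out are correct and cleanly done: (2)+(3) give $\dom(D_n)=\dom(D)$ and norm resolvent convergence via Kato--Rellich; the gap passes to the $D_n$; $|\normalize{D_n}|\to|\normalize{D}|$ in norm; and the problem becomes $\sign(D_n)\to\sign(D)$. But at that point you declare the remaining step ``the main obstacle,'' repackage it via polar decompositions as ``$V_n\to 1$,'' and defer it to ``a perturbation estimate for the spectral projections, cf.\ \cite{Nic}.'' Since the lemma being proved \emph{is} that result of \cite{Nic}, this is circular. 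Moreover the polar-decomposition bookkeeping makes no actual progress: from $|D_n|\,|D|^{-1}=\sign(D_n)\bigl(\sign(D)+R_n|D|^{-1}\bigr)$ one sees that $V_n\to 1$ is literally equivalent to $\sign(D_n)\to\sign(D)$ given what you have already established, so the unproved step is the entire content of the lemma.

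It is also not a routine step that a referee could wave through. Of the facts you establish, (a) norm resolvent convergence and (b) the common gap are provably insufficient: take $D$ diagonal with eigenvalues the nonzero integers and let $D_n$ flip the eigenvalue $n$ to $-n$; then $D_n\to D$ in norm resolvent sense, all operators share the gap $(-1,1)$, yet $\norm{\normalize{D_n}-\normalize{D}}\to 2$ (here (3) fails, since one would need $c_n\geq 2n/(n+1)$). So the proof must exploit (c), the bound $\norm{R_n|D|^{-1}}\leq Cc_n$, in an essentially noncommutative way to control the spectral projections, and your text offers no mechanism for this. It is genuinely delicate: the natural attempt via $\sign(A)=\tfrac{2}{\pi}\int_0^\infty A(A^2+t^2)^{-1}\,dt$ and termwise norm estimates of the difference of integrands yields only a bound of order $c_n(1+t)^{-1}$, which is logarithmically divergent, so the integral has to be split and the tail handled by a separate argument. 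Supplying that estimate (or an equivalent one) is what is missing; without it the proposal is a correct reduction, not a proof.
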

\begin{proof}[Proof of Proposition \ref{fredholmproperty:familywise}, first step]
We first prove that the map $X=\fF_{g,h} \to \Fred (L^2 (M;E))$, $x \mapsto \normalize{D_x}$ is continuous (the target has the norm topology induced by $g$ and $h$).
Because $X$ is a metric space, it suffices to prove sequential continuity. 
Let $D_n\to D $ be a sequence of operators associated with the convergent sequence $x_n \to x$ in $X$.
The operators $D_n$ and $D$ all have the same domain $\cW$. Thus the second condition of Lemma \ref{nicolaescu-lemma} holds. The first condition of Lemma \ref{nicolaescu-lemma} holds in our situation. If $0$ is not in the spectrum of $D$, there is nothing to show, and if $0$ is in the spectrum of $D$, then it is an isolated eigenvalue because $D$ is a self-adjoint Fredholm operator. In that case, any sufficiently small $\lambda>0$ does not lie in the spectrum of $D$, hence $D$ has a spectral gap.
For the third condition, observe that the sequences $\norm{\mu_0 (D_n-D)}_{1,0}$ and $\norm{B_{\pm,n}-B_{\pm}}_{1,0}$ are null sequences (use Proposition \ref{pseudo-properties}). Likewise, the operator norm of $\sigma_{\pm,n}-\sigma_{\pm}$ on $H_{\pm}$ converges to zero. Estimate
\begin{equation}\label{estimate-for-nicolaescu-lemma}
\norm{ (D_n - D)u}_{\cH} \leq \norm{\mu_0 (D_n - D)u}_{\cH} + \norm{\mu_{+} (D_n - D)u}_{\cH}  + \norm{\mu_{-} (D_n - D)u}_{\cH}.
\end{equation}
The first term is estimated by $\norm{\mu_0 (D_n-D)}_{1,0} \norm{ u }_{\cW} $. The second summand is 
\[
\norm{\mu_{+} (D_n - D)u}_{\cH} \leq \norm{ (B_{+,n} - B_{+}) \mu_+ u}_{\cH} + \norm{\mu_{+} (\sigma_{+,n} - \sigma_+) \dot{u}}_{\cH} \leq 
\]
\[
\leq \norm{B_{+,n}-B_+}_{1,0}  \norm{u}_{\cW} + \norm{\sigma_{+,n} - \sigma_+} \norm{u}_{\cW} .
\]
The third summand in (\ref{estimate-for-nicolaescu-lemma}) is similarly estimated. So we can find a null sequence $c_n$ with $\| (D_n - D)u\|_{\cH} \leq c_n \norm{u}_{\cW}$ and the elliptic estimate proves that the third condition of Lemma \ref{nicolaescu-lemma} holds. Therefore $\normalize{D_x}$ depends continuously on $x$. By Lemma \ref{local-to-global-fredholm-families}, $x \mapsto \normalize{D_x}$ is a Fredholm family on the trivial Hilbert bundle $\cH_{X}$.
\end{proof}

We now drop the condition that all metrics are constant, in other words, we move to the space $\fF$. This is done by a ``gauging trick'', based on Observation \ref{gauging-lemma}.

\begin{proof}[Proof of Proposition \ref{fredholmproperty:familywise}, second and third step.]
Fix a metric $g_0$ on $N$ and a metric $h_0$ on $E$, both with cylindrical ends.
We will construct a map $\Phi: \fF  \to \fF_{g_0,h_0}$ which is covered by a Hilbert bundle isometry $\hat{\Phi}$ of the tautological Hilbert bundles that conjugates the families $(\normalize{D_x})_{x}$ over these spaces. We use Observation \ref{gauging-lemma} and define $\Phi$ by 
\begin{equation}\label{defn:phi-gauging}
\Phi:(g,h,\sigma,D)  \mapsto (g_0,h_0,\tau_{h_0,h}\sigma\tau_{h_0,h}^{-1},T_{(g,h),(g_0,h_0)} D T_{(g,h),(g_0,h_0)}^{-1}).
\end{equation}
The Hilbert space isometry is given by $T_{(g,h),(g_0,h_0)}: \cH_x \to \cH_{\Phi(x)}$. This proves that the tautological family over $\fF$ is the pullback of the tautological family over $\fF_{g_0,h_0}$, and as the latter is continuous by the first step of the proof, so is the former.

Now we have proven Proposition \ref{fredholmproperty:familywise} for the tautological family over $\fF$, i.e. for trivial manifold bundles. If $Q \to X$ is a $G$-principal bundle, and the manifold bundle is $Z=Q\times_G X$, we argue as follows. Being a continuous family of Fredholm operators is a local property. We may choose a local section $s$ of $Q \to X$ over $U \subset X$. Under this section, we can view the family of pseudo-differential operators as a map $U \to \fF$. This completes the proof.
\end{proof}

\subsection{Formulation of the generalized spectral-flow index theorem}\label{relevantexamples}

\begin{assumptions}\label{assumptions:familydirac}
Let $M$ be a closed manifold and let $X$ be a paracompact space. The projection map $\pi:X \times \bR \times M \to X \times \bR$ is an $M$-bundle, and the projection map $\Pi: X \times \bR \times M \to X$ is an $\bR\times M$-bundle, which has cylindrical ends. Let $E \to X \times \bR \times M$ be a graded (finite-dimensional) $\Cl^{p,q}$-Hilbert bundle. Let $g=(g_{(x,t)})_{(x,t) \in X \times \bR}$ be a family of Riemannian metrics on $M$. Finally, let $A=(A_{(x,t)})_{(x,t) \in X \times \bR}$ be a family of odd $\Cl^{p,q}$-linear pseudo-differential operators of order $1$ ($A_{(x,t)}$ is a pseudo-differential operator on the bundle $E_{(x,t)}:=E|_{(x,t) \times M}$).
We assume that all data are smooth in the $\bR \times M$-direction and cylindrical outside $\bI$, and that $A_{(x,t)}$ is invertible for $|t|\geq 1$.

There is a version of these data relative to a closed subspace $Y \subset X$. In that case, we assume that $g_{(y,t)}$ is independent of $t$ for $y \in Y$, that $E|_{Y \times \bR \times M}$ is of the form $\pr_{Y \times M}^{*} E_0$ (as a $\Cl^{p,q}$-Hilbert bundle) and that $A_{y,t}$ does not depend on $t$ if $y \in Y$ (note that in particular $A_{(y,t)}$ is invertible for all $(y,t) \in Y \times \bR$). 
\end{assumptions}

\begin{definition}
Let $M$, $X$, $Y$, $g$, $E$ and $A$ be as in Assumption \ref{assumptions:familydirac}. The family $(x,t) \mapsto \normalize{A_{(x,t)}}$ defines an element $\Lambda (E,A)\in \Omega F^{p,q} (X,Y)$ (by a simple application of Proposition \ref{fredholmproperty:familywise}). 
\end{definition}

So far, we considered the $M$-bundle $\pi$. We want to define a family of operators on the $\bR \times M$-bundle $\Pi$, and obtain an index in $F^{p+1,q}(X)$ (using the analysis contained in Proposition \ref{fredholmproperty:familywise}). For that, we need one more piece of datum.

\begin{assumptions}\label{assumptions:familydirac2}
Let $M$, $X$, $g$, $E$ and $A$ be as in Assumption \ref{assumptions:familydirac}. Consider $E$ as a vector bundle over the manifold bundle $\Pi$, and pick a $\Cl^{p,q}$-linear metric connection $\nabla$ on $E$ such that the grading involution $\iota$ is parallel and such that $\nabla$ is cylindrical outside $X \times \bI$ and over $Y \times \bR$ (this implies that outside $X \times \bI$ and over $Y \times \bR$, the operator $\nabla_{\partial_t}$ is the same as $\partial_t$). 
For $x \in X$, we obtain a vector bundle $E_x = E|_{x \times \bR \times M} \to \bR \times M$. The operators $A_{(x,t)}$ induce an operator $A_x$ on $\Gamma_c (\bR \times M;E_x)$ (which is not elliptic). Likewise, the connection $\nabla$ induces an operator $\nabla_{\partial_t}$ on $\Gamma_c (\bR \times M;E_x)$.
\end{assumptions}

\begin{definition}\label{defsuspension}
The \emph{suspension} of the data $(E,A,\nabla)$ is the following amount of data. Let $\Sigma E:= E \oplus E\to X \times \bR \times M$. We define a $\Cl^{p+1,q}$-action $c'$ and a grading $\iota'$ by
\[
\iota':= \twomatrix{\iota}{}{}{-\iota}, \; c'(e_1) :=\twomatrix{}{\iota}{-\iota}{}, \; c' (\eps_i) := \twomatrix{\eps_i}{}{}{\eps_i}, \; c' (e_i) := \twomatrix{e_{i-1}}{}{}{e_{i-1}} \, (i \geq 2).
\]
It is easy to see that
\begin{equation}
(D_A)_{x}:= \twomatrix{A_x}{- \nabla_{\partial_t}}{ \nabla_{\partial_t}}{- A_x}
\end{equation}
is a symmetric, odd and $\Cl^{p+1,q}$-linear pseudo-differential operator $(D_A)_x:
\Gamma_c (x \times \bR \times M; \Sigma E)\to \Gamma_c (x \times \bR \times M; \Sigma E)$ of order $1$. 
Since outside $X \times \bI$, $\nabla_{\partial_t}=\partial_t$, the operator $(D_A)_x$ is of the form that was considered in section \ref{elliptic-preliminaries} so that all the results now apply to this situation. In particular, we get a $\Cl^{p+1,q}$-Fredholm operator $(D_A)_x$ by Proposition \ref{fredholmproperty:pointwise}. 
Due to Proposition \ref{fredholmproperty:familywise}, we get a $\Cl^{p+1,q}$-Fredholm family
\[
x \mapsto \frac{(D_A)_x}{(1+ (D_A)_{x}^{2})^{1/2}}
\]
over $X$ and hence a well-defined element $\susp(E,A, \nabla) \in F^{p+1,q}(X)$. 
\end{definition}
We wish to have a relative index as well.

\begin{lemma}\label{spectralflow:basic-properties}\mbox{}
We use the notation of Assumptions \ref{assumptions:familydirac} and \ref{assumptions:familydirac2}. Then $\susp (E,A,\nabla)$ defines a relative $K$-theory class in $F^{p+1,q} (X,Y)$ which does not depend on the choice of $\nabla$. This $K$-theory class is denoted by $\susp (E,A)$.
\end{lemma}
\begin{proof}
There is a function $c : Y \to (0, \infty)$ such that $A_y^2 \geq c(y)$ holds for all $y \in Y$. Then 
\[
(D_A)_y^2= \twomatrix{A^2_y+ \nabla_{\partial_t}^{*}\nabla_{\partial_t}}{}{}{A^2_y+ \nabla_{\partial_t}^{*}\nabla_{\partial_t}} \geq c(y)
\]
(to get this simple formula for $D_A^2$, it is necessary that $A_{y,t}$ does not depend on $t$). Therefore 
\[
\left( \normalize{(D_A)_y}\right)^2 \geq \frac{c(y)^2}{1 + c(y)^2}.
\]
The function $y \mapsto \frac{c(y)^2}{1 + c(y)^2}$ is positive on $Y$, and thus $\normalize{(D_A)_y}$ is invertible, by Lemma \ref{invertibility-via-positivity}.

The independence of $\nabla$ is clear: the space of connections with the properties used in the construction is convex, and we built in concordance invariance into the definition of $F^{p,q}$.
\end{proof}

\begin{definition}\label{defn:pseudo-dirac}
A symmetric elliptic pseudo-differential operator $D$ of order $1$ on sections of a vector bundle $E \to M$ on a Riemannian manifold is called \emph{pseudo-Dirac operator} if the leading symbol $\symb_D$ is the leading symbol of a differential operator of Dirac type \cite[Def. 3.1]{BW}. In other words, we require that $\xi \mapsto \symb_D (\xi)$ is a linear map and for all $\xi \in T^* M$, the equation $\symb_D (\xi)^2 = + | \xi|^2$ holds. 
\end{definition}

We can now finally state the first version of the main result of this paper, which is a generalization of the spectral-flow index theorem. Recall the Bott map $\bott:F^{p+1,q} (X,Y)\to \Omega F^{p,q} (X,Y)$. 

\begin{theorem}\label{mainresult-homotopyversion}
Let $(X,Y)$ be a compact CW pair. Assume that $M$, $g$, $E$ and $A$ is as in Assumption \ref{assumptions:familydirac} and that in addition, $A_{(x,t)}$ is a pseudo-Dirac operator for all $(x,t )\in X \times \bR$. Then the two elements $\bott \circ \susp (E,A)$ and $\Lambda(E,A)$ in $\Omega F^{p,q} (X,Y)$ are equal.
\end{theorem}

The proof of Theorem \ref{mainresult-homotopyversion} will use a $K$-theoretic reformulation that we now develop.

\begin{definition}\label{def-l-group}
Let $(X,Y) $ be a pair of spaces and let $M$ be a closed Riemannian manifold. Consider the abelian group, generated by isomorphism classes of pairs $(V,A)$, where 

\begin{enumerate}
\item $V \to X \times M $ is a finite-dimensional $\Cl^{p,q}$-Hilbert bundle, smooth in $ M$-direction.
\item $A=(A_{(x,t)})_{(x,t) \in X \times \bR}$ is a smooth (in $t$-direction) family of $\Cl^{p,q}$-linear, odd pseudo-Dirac operators on $\bR \times V\to X \times \bR \times M$ such that $A_{(x,t)} = A_{(x,1)}$ for $t>1$, $A_{(x,t)} = A_{(x,-1)}$ for $t<-1$, $A_{(x,\pm 1 )}$ is invertible for each $x \in X$ and $A_{(y,t)}$ is $t$-independent for each $y\in Y$.
\end{enumerate}
There are obvious notions of direct sum, isomorphism and concordance of pairs $(V,A)$, and concordance is an equivalence relation on the set of isomorphism classes.
We consider the abelian group generated by the concordance classes of pairs $[V,A]$ and divide out the subgroup generated by the following relations.

\begin{enumerate}
\item $[V_0,A_0] + [V_1,A_1] = [ V_0 \oplus V_1, A_0 \oplus A_1]$.
\item $[V,A]=-[V,A']$, where $A'$ is the family $A'_{(x,t)} = A_{(x, -t)}$.
\end{enumerate}
The quotient group by this equivalence relation is denoted $L^{p,q}_{M}(X,Y)$.
\end{definition}

Given such a pair $(V,A)$, we have the bundle $\bR \times V \to X \times \bR \times M$ and 
we are in the situation of Assumptions \ref{assumptions:familydirac} and \ref{assumptions:familydirac2}. Therefore, using Lemma \ref{spectralflow:basic-properties}, we have elements 
\[
\susp(V,A):=\susp (\bR \times V,A) \in F^{p+1,q}(X,Y)\; \text{ and } \;\Lambda (V,A):= \Lambda (\bR \times V,A) \in \Omega F^{p,q} (X,Y).
\]

\begin{remark}\label{connection-is-parder}
The connection $\nabla$ on $\bR \times V$ used to define the suspension can be chosen so that $\nabla_{\partial_t} = \partial_t$. 
\end{remark}

\begin{proposition}\label{def-spectral-flow-map}
The classes $\susp(\bR \times V,A)\in  F^{p+1,q}(X,Y)$ and $\Lambda (\bR \times V,A) \in \Omega F^{p,q} (X,Y)$ only depend on the equivalence class $[V,A] \in L^{p,q}_{M} (X,Y)$.
\end{proposition}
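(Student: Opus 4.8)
The plan is to verify that both assignments $(V,A)\mapsto\susp(\bR\times V,A)$ and $(V,A)\mapsto\Lambda(\bR\times V,A)$ respect the three ingredients of the equivalence relation defining $L^{p,q}_M(X,Y)$: isomorphism, concordance, direct sum, and the sign relation coming from time-reversal. The case of isomorphisms of pairs is immediate, since an isomorphism $(V,A)\cong(V',A')$ induces an isometric isomorphism of the associated Hilbert bundles intertwining the bounded transforms, and both $F^{p+1,q}$ and $\Omega F^{p,q}=F^{p,q}((\bI,\{\pm1\})\times(-))$ are defined as quotient monoids by acyclic cycles under exactly such isomorphisms (Definition \ref{defn-fpq}). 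Direct sums are equally routine: $\bR\times(V_0\oplus V_1)=(\bR\times V_0)\oplus(\bR\times V_1)$ as $\Cl^{p,q}$-Hilbert bundles, the suspension construction of Definition \ref{defsuspension} is visibly additive (the matrix formulae are applied blockwise), and the bounded transform of a direct sum is the direct sum of bounded transforms; hence $\susp$ and $\Lambda$ are both additive, matching relation (1).

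For concordance, suppose $(V,A)$ over $X\times M$ restricts to $(V_i,A_i)$ over $X\times\{i\}\times M$ for $i=0,1$, with the family $t$-independent over $Y$. Then $(V,A)$ is itself a pair in the sense of Definition \ref{def-l-group} over the space pair $(X\times[0,1],Y\times[0,1])$, and applying the suspension and $\Lambda$ constructions to it yields a $(p+1,q)$-cycle (resp.\ a $\Omega$-cycle) over $X\times[0,1]$, invertible over $Y\times[0,1]$, which by definition is a concordance between the cycles associated to $(V_0,A_0)$ and $(V_1,A_1)$. Here one uses Proposition \ref{fredholmproperty:familywise} once more, applied over the base $X\times[0,1]$, to know that the bounded transforms assemble into a genuine Fredholm family; the invertibility over $Y\times[0,1]$ follows from Lemma \ref{spectralflow:basic-properties}(1) exactly as in the proof given there. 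Thus both classes descend to concordance classes.

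It remains to handle the sign relation $[V,A]=-[V,A']$ with $A'_{(x,t)}=A_{(x,-t)}$. Here the natural approach is to exhibit an explicit isomorphism, after a change of $\Cl$-structure accounting for the sign, between the suspension cycle of $(V,A')$ and the additive inverse of the suspension cycle of $(V,A)$, using Lemma \ref{additiveinverse} (which identifies the inverse of $[V,c,\iota,F]$ with $[V,-c,-\iota,F]$ or $[V,c,-\iota,-F]$). Concretely, pulling back along the diffeomorphism $t\mapsto -t$ of $\bR$ carries $D_{A'}$ into $-\nabla_{\partial_t}$ in the off-diagonal slots and $A_x$ on the diagonal — i.e.\ into an operator differing from $D_A$ by conjugation by a Clifford element and a sign on $e_1$, which is precisely the kind of change that Lemma \ref{additiveinverse} shows represents the additive inverse; an essentially identical but simpler computation handles $\Lambda$, where reversing the parameter interval $\bI=[-1,1]$ is visibly the inversion map on the loop space $\Omega F^{p,q}$. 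The main obstacle is the bookkeeping in this last step: one must track carefully how the reflection $t\mapsto -t$ interacts with the $\Cl^{p+1,q}$-structure $c'$ of Definition \ref{defsuspension} (in particular the sign on the new generator $c'(e_1)$) and with the connection $\nabla$ used to form $\nabla_{\partial_t}$ — but since $\susp$ is independent of $\nabla$ by Lemma \ref{spectralflow:basic-properties}(2), one is free to choose $\nabla$ symmetric under the reflection, which removes that complication and leaves only a finite Clifford-algebra sign computation.
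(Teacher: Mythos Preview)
Your handling of isomorphisms, direct sums, concordances, and the reflection relation for $\Lambda$ matches the paper's. The only substantive difference is in the reflection relation for $\susp$.

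For that step the paper is more direct: rather than identifying $\susp(V,A')$ with a cycle obtained from $\susp(V,A)$ by a sign change and then invoking Lemma \ref{additiveinverse}, it shows $\susp(V,A)+\susp(V,A')=0$ in one stroke. After conjugating $D_{A'}$ by the reflection $\phi$ on $L^2(\bR\times M;\Sigma E)$, the paper writes the direct sum $D_A\oplus\phi D_{A'}\phi$ as a $4\times 4$ block operator and exhibits an explicit odd, $\Cl^{p+1,q}$-linear, self-adjoint involution $Q$ anticommuting with it; Lemma \ref{nullhomotopy-criterion} then gives the nullhomotopy of the bounded transform.

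Your route can also be completed, but two points need attention. First, the phrase ``conjugation by a Clifford element'' is not right: $D_A$ is Clifford-linear, so conjugation by any Clifford element fixes it. What actually works is conjugation by $\psi\phi$ with $\psi=\bigl(\begin{smallmatrix}1&0\\0&-1\end{smallmatrix}\bigr)$ on $\Sigma E=E\oplus E$; this carries $D_{A'}$ to $D_A$ and is even, but it \emph{anticommutes} with $c'(e_1)$ while commuting with the other generators, so it exhibits an isomorphism of cycles $(\Sigma E,\iota',c',D_{A'})\cong(\Sigma E,\iota',c'',D_A)$ where $c''$ flips only the sign of $e_1$. Second, Lemma \ref{additiveinverse} as stated does \emph{not} cover flipping a single generator; it records only the cases ``flip all of $c$ and $\iota$'' and ``flip $\iota$ and $F$''. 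The statement you need (flip one $e_i$ gives the inverse) does follow from the same method --- take $Q=\bigl(\begin{smallmatrix}0&c(e_1)\iota\\c(e_1)\iota&0\end{smallmatrix}\bigr)$ on the doubled bundle and apply Lemma \ref{nullhomotopy-criterion} --- but it is an additional computation you would have to supply. The paper's $4\times4$ argument effectively fuses these two steps into one.
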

\begin{proof} 
That both construction preserve direct sums is trivial. That they preserve concordances is also quite clear, using Proposition \ref{fredholmproperty:familywise} (since a homotopy defines a pair on the product bundle over $X \times I$). 
Reflection in the $\bR$-axis acts by multiplication with $-1$ on $\Omega F^{p,q}(X,Y)$. This follows from Theorems \ref{compact-comparison-theorem}, \ref{atiyah-singer-karoubi1} and \cite[Lemma 2.4.5]{At66}.
Therefore the $\Lambda$-construction preserves the reflection relation in $L^{p,q}_{M} (X,Y)$.
That the suspension preserves the reflection relation is less trivial to prove.
Let $\phi: \cH \to \cH$ be the involution induced by reflection $t \mapsto -t$, which preserves the Clifford structure and the grading. Conjugating $D_{A'}$ by $\phi$ gives the operator $\phi D_{A'} \phi$. The direct sum $D_A \oplus \phi D_{A'} \phi$ is
\[
F:=\begin{pmatrix}
A_t & - \partial_t & & \\
 \partial_t  &  -A_t & & \\
   &  & A_t &  \partial_t\\
   & &  -\partial_t & -A_t\\
\end{pmatrix}, \; \text{and we define}\;
Q:=\begin{pmatrix}
 & & & 1\\
  & &-1 & \\
   & -1 & & \\
  1  & & & \\
\end{pmatrix}.
\]
The operator $Q$ anticommutes with $F$, is $\Cl^{p+1,q}$-linear, self-adjoint, odd and satisfies $Q^2 =1$. This implies that $\normalize{F}$ is homotopic to an invertible operator, by Lemma \ref{nullhomotopy-criterion} (note that the normalizing function $\normalize{x}$ is odd, and thus $\normalize{F}$ anticommutes with $Q$).
\end{proof}

Therefore, we have two maps
\[
\bott \circ \susp, \, \Lambda: L^{p,q}_{M} (X,Y) \to \Omega F^{p,q} (X,Y).
\]
The $K$-theoretic version of Theorem \ref{mainresult-homotopyversion} is 

\begin{theorem}\label{main-index-theorem}
If the manifold $M$ is nonempty and of positive dimension, then the two maps $\bott \circ \susp$ and $\Lambda: L^{p,q}_{M} (X,Y) \to \Omega F^{p,q} (X,Y)$ agree for each finite CW pair $(X,Y)$.
\end{theorem}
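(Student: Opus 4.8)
The plan is to follow the classical three-step strategy for index theorems: (1) verify that both sides of the claimed equality define natural transformations of functors in $(X,Y)$ that are compatible with the formal structures ($\KO^0$-module structure, suspension/Bott maps, Morita equivalences, and the reflection relation already built into $L^{p,q}_M$), (2) use a representability / surjectivity statement to reduce to a single ''universal'' generator, and (3) pin down the universal case by an explicit computation. Concretely, I would first reduce the general $(p,q)$ to a fixed pair, say $(p,q)=(0,1)$, using the Morita isomorphisms $\mor^{1,1}$, $\mor^{8,0}$, $\mor^{0,8}$ of \eqref{morita-equivalences} together with the fact (asserted after \eqref{morita-equivalences} and \eqref{bott-map2}) that these are compatible with the Bott maps; on the level of operators, the Morita moves are algebraic rearrangements of the Clifford generators that commute with $\normalize{\cdot}$ and with passage to the suspension $D_A$, so both $\bott\circ\susp$ and $\Lambda$ are transported to the corresponding maps for $(0,1)$. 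Then I would reduce $X$ to a point: a finite CW pair $(X,Y)$ is built by attaching cells, and by Theorem \ref{paracompact-comparison-theorem} the functor $\Omega F^{p,q}$ is representable, so both $\bott\circ\susp$ and $\Lambda$ are determined by their effect on the cells, i.e. on spheres, i.e. ultimately on $\Omega F^{p,q}(*)$.

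The crucial input for the reduction to $X=*$ is Proposition \ref{thm:invertiblepseudos} (quoted in the overview as the statement that curves of $\Cl^{p,q}$-linear pseudo-Dirac operators with invertible endpoints are rich enough to realize $\Omega\Fred^{p,q}$): this is what lets us say that every element of $\Omega F^{p,q}(X,Y)$ in the image of $\Lambda$ already comes from an $L^{p,q}_M$-class, and — combined with homotopy invariance and the relations in $L^{p,q}_M$ — that the two maps $\bott\circ\susp$ and $\Lambda$, being natural and additive, are determined by a single universal example. So I would fix $M$ (of positive dimension, e.g.\ $M=S^1$, or any closed manifold carrying an invertible pseudo-Dirac operator $B$ with the required anticommutation data), choose the simplest nontrivial family $A(t)$ on $M$ with $A(\pm1)$ invertible whose spectral flow is $\pm1$, and compute both $\Lambda(\mathbb R\times V,A)$ and $\bott\circ\susp(\mathbb R\times V,A)$ directly. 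For $\Lambda$ this is essentially the generator of $\pi_1\Fred^{0,1}$ detected by spectral flow; for $\bott\circ\susp$ one computes the Fredholm index of the suspended operator $D_A=\partial_t\sigma + A(t)$ on $\mathbb R\times M$ by the standard ODE argument (for $A$ with $A(+\infty)$, $A(-\infty)$ invertible, $\ker D_A$ and $\operatorname{coker}D_A$ are read off from the decaying/growing eigensolutions), obtaining $\pm1$ with the same sign. This is the case treated by Robbin–Salamon \cite{RoSa} in the abstract setting, whose result I would cite rather than reprove — the excerpt's overview explicitly says ''we use a special case of their result'' (together with an explicit index computation).

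The main obstacle will be step (1), making rigorous that both $\bott\circ\susp$ and $\Lambda$ are genuine natural transformations on the category of finite CW pairs that are simultaneously additive, compatible with the $L^{p,q}_M$-relations (direct sum and reflection — the reflection compatibility for $\susp$ is the slightly delicate point already handled in Proposition \ref{def-spectral-flow-map}), and equivariant for the Morita and Bott structures, so that the diagram chase reducing to $(p,q)=(0,1)$, $X=*$ actually closes. In particular one must check that the Bott map $\bott$ of \eqref{bott-map2} intertwines $\susp$ with the ''add a $\partial_t$'' operation in the way required, i.e.\ that $\bott\circ\susp$ genuinely lands in $\Omega F^{p,q}$ and is natural; this is a compatibility between two different ''suspension-type'' constructions (the algebraic Bott map on $F$-groups versus the analytic passage from $M$ to $M\times\mathbb R$) and verifying it amounts to writing down an explicit homotopy between $\bott(D_A)$ and a family built directly from $A$ on the nose, using Lemma \ref{nullhomotopy-criterion} to absorb the discrepancies into positivity-controlled deformations. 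Once naturality and all compatibilities are in place, the reduction is formal and the endgame is the explicit $\pm1$ computation above, with the sign fixed once and for all by the Robbin–Salamon normalization.
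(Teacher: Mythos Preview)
Your three-step strategy is the paper's, and you correctly identify Proposition \ref{thm:invertiblepseudos} and Lemma \ref{nullhomotopy-criterion} as the workhorses. However, there is a genuine gap in the reduction step, and the last paragraph misidentifies which compatibility actually has to be checked.

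The Morita equivalences $\mor^{1,1}$, $\mor^{8,0}$, $\mor^{0,8}$ alone only show that the statement $\cP_{(p,q)}$ depends on $p-q$ modulo $8$; they do \emph{not} reduce to the single pair $(0,1)$. The missing ingredient is a Bott map \emph{on the $L$-groups themselves}: one defines $\bott: L^{p,q}(X,Y) \to \Omega L^{p-1,q}(X,Y)$ by $A_{(x,t)} \mapsto A_{(x,t)} + sJ$ with $J=e_1\iota$, and then proves that this commutes with both $\Lambda$ and $\bott\circ\susp$ separately. Since $\bott$ on the $F$-groups is an isomorphism, this yields $\cP_{(p-1,q)}\Rightarrow \cP_{(p,q)}$, and together with Morita one reaches every $(p,q)$ from $(0,1)$. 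The verification that $\bott$ on $L$ commutes with $\Lambda$ is precisely the anticommutator computation you were reaching for: one compares $\frac{A(t)+sJ}{(1+(A(t)+sJ)^2)^{1/2}}$ with $\frac{A(t)}{(1+A(t)^2)^{1/2}}+sJ$ and checks, using $AJ+JA=0$, that their anticommutator is nonnegative, so Lemma \ref{nullhomotopy-criterion} applies. Your final paragraph instead proposes to check ``a compatibility between the algebraic Bott map on $F$-groups and the analytic passage from $M$ to $M\times\bR$'', i.e.\ to homotope $\bott(D_A)$ directly to a family built from $A$; written out, that is the assertion $\bott\circ\susp=\Lambda$ itself, not a preparatory compatibility.

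For the reduction to $X=*$, cell attachment does not close the argument on its own: knowing two natural transformations agree on a point does not force agreement on all finite $(X,Y)$, and reducing to cells $(D^n,\partial D^n)$ merely trades $(X,Y)$ for other nontrivial pairs (which, via suspension, bring back other values of $(p,q)$). The paper's route is both cleaner and logically independent of the cell structure: first show that $\Lambda$ is an \emph{isomorphism} (this is the real content of Proposition \ref{thm:invertiblepseudos}, packaged as Theorem \ref{isomorphism-theorem}), and then use the $\KO^0$-linearity of both $\Lambda$ and $\susp$ that you listed. Since $L^{0,1}(X,Y)\stackrel{\Lambda}{\cong}\Omega F^{0,1}(X,Y)\cong\KO^0(X,Y)$, every class is of the form $u\cdot a$ for a fixed generator $u\in L^{0,1}(*)$ and some $a\in\KO^0(X,Y)$; then
\[
\bott(\susp(u\cdot a))=\bott(\susp(u))\cdot a=\Lambda(u)\cdot a=\Lambda(u\cdot a)
\]
once the point case is established by the explicit spectral-flow-equals-index computation you described.
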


It might seem that Theorem \ref{mainresult-homotopyversion} follows immediately from Theorem \ref{main-index-theorem}. However, the proof requires a nontrivial argument and is therefore deferred to section \ref{hard-gauging} below.

\subsection{The definitions of the index difference}\label{defnsinddiff}
Let $M^d$ be a closed $d$-dimensional spin manifold, with a Riemannian metric $g$. Let us briefly recall the spin package \cite[\S II.7]{LM}. 
The spinor bundle $\spinor_g \to M$ is a finite-dimensional, fiberwise irreducible $\Cl^{TM}$-$\Cl^{d,0}$-Hilbert bimodule bundle. It carries a canonical connection $\nabla$ inherited from the Levi-Civita connection on $M$. Using Clifford multiplication and the connection, one defines the Atiyah-Singer-Dirac operator $\Dir_g$ as the composition
\[
\Gamma (M;\spinor_g) \stackrel{\nabla}{\to} \Gamma (M; T^* \otimes \spinor_g) \stackrel{c}{\to} \Gamma (M;\spinor_g);
\]
this is a $\Cl^{d,0}$-linear, odd and symmetric elliptic differential operator. It is related to scalar curvature by the Schr\"odinger-Lichnerowicz formula

\begin{equation}\label{weitzenboeck}
\Dir_g^2 = \nabla^* \nabla + \frac{1}{4} \scal_g.
\end{equation}

We wish to give the precise definitions of the index difference and to this end, we produce the data as described in Assumption \ref{assumptions:familydirac}.
Let $\Riem (M)$ be the space of Riemannian metrics (an open convex subspace of the Fr\'echet space of smooth symmetric bilinear forms on the tangent bundle of $M$) and $\Riem^+ (M) \subset \Riem (M)$ be the subspace of metrics with positive scalar curvature. We put $X= \Riem^+ (M) \times \Riem^+ (M)$ and let $Y =\Delta \subset X$ be the diagonal.
Pick a smooth function $a: \bR \to [0,1]$ that is $1$ on $(-\infty,-1]$ and $0$ on $[1,\infty)$. Let $x=(x_{-1},x_1) \in \Riem^+ (M) \times \Riem^+ (M)$ and $t \in \bR$. Let
\[
g_{(x,t)} := a(t)  x_{-1} + (1-a(t)) x_{1}.
\]
If $|t|\geq 1$, the scalar curvature of $g_{(x,t)}$ is positive; likewise, if $x \in Y$ (i.e. $x_{-1}=x_1$), then $g_{(x,t)}$ does not depend on $t$ and has positive scalar curvature. This construction yields a fiberwise metric on the $M$-bundle $\pi: X\times \bR \times M \to X \times \bR$. Let $\spinor^d \to  X\times \bR \times M $ be the fiberwise spinor bundle (the superscript should remind the reader that it is a $\Cl^{d,0}$-bundle). On the fiber $\pi^{-1}(x,t)$, there is the Atiyah-Singer-Dirac operator $\Dir_{g_{(x,t)}}$, which is $\Cl^{d,0}$-linear and yields a family $\Dir^d$ of elliptic operators over $X \times \bR$. By construction, all these data are cylindrical outside $X \times \bI$ and constant in $\bR$-direction over $Y$. Therefore, we are in the situation of Assumption \ref{assumptions:familydirac} and the connection $\nabla$ satisfies the properties of Assumption \ref{assumptions:familydirac2}. By the formula (\ref{weitzenboeck}), $\Dir_{g_{(x,t)}}$ is invertible whenever $|t| \geq 1$ or $x \
in Y$. 
All the assumptions of our analysis are satisfied, and we define Hitchin's version of the index difference as 
\[
\inddiffH := \Lambda (\spinor^d,\Dir^d) \in  \Omega F^{d,0} (X,Y).
\]
Since $(X,Y)$ is a paracompact and compactly generated pair, we can invoke Theorem \ref{paracompact-comparison-theorem} and obtain a well-defined homotopy class
\[
\inddiffH: (\Riem^+ (M) \times \Riem^+ (M), \Delta) \to (\Omega K^{d,0}, \Omega D^{d,0}).
\]

To give Gromov and Lawson's definition of the index difference, we consider the trivial $\bR \times M$-bundle 
\[
\Pi:  \Riem^+ (M) \times \Riem^+ (M)\times \bR \times M\to \Riem^+ (M) \times \Riem^+ (M).
\]
We equip the fiber over $x \in  \Riem^+ (M) \times \Riem^+ (M)$ with the metric $dt^2 + g_{(x,t)} $. There is the spinor bundle $\spinor^{d+1}$ of the $\bR \times M $-bundle $\Pi$ (with the above metric), with the family of Dirac operators $\Dir^{d+1}$. The suspension of $\spinor^d$ is $\spinor^{d+1}$. This of course depends on a convention how a spin structure on $M$ induces a spin structure on the cylinder $\bR \times M$, but we do not spell out the convention here.
The operators $\Dir^{d+1}$ and $D_{\Dir^d}$ are not quite the same, but they have the same symbol and they agree over $Y \times \bR$ and outside $X \times \bI$. Therefore, the Fredholm families defined by $\Dir^{d+1}$ and $D_{\Dir^d}$ are concordant over $(X,Y)$ and so they define the same element in $F^{d+1,0} (X,Y)$.
We define Gromov and Lawson's version of the index difference as
\[
\inddiffGL:= \susp (\spinor^d, \Dir^d) \in F^{d+1,0} (X,Y)
\]
and get a map 
\[
\inddiffGL: (\Riem^+ (M) \times \Riem^+ (M),\Delta) \to (K^{d+1,0},D^{d+1,0}).
\]

If $(X,Y)$ is a finite CW pair and $f:(X,Y) \to (\Riem^+ (M) \times \Riem^+ (M), \Delta)$, then $\inddiffH \circ f \sim \bott \circ \inddiffGL \circ f$ (relative homotopy) by Theorem \ref{mainresult-homotopyversion}, and so we have proven Theorem \ref{mainresult-psc}.

\section{Proof of the main result}\label{ktheory}

In this section, we give the proof of Theorem \ref{main-index-theorem} which follows a common pattern in index theory: we use $K$-theoretic arguments to reduce the problem to a single index computation, and this formal idea is the same as in the classical papers \cite{AS1,At}. Recall that Theorem \ref{main-index-theorem} asserts the commutativity of the diagram
\[
\xymatrix{
L^{p,q}_{M} (X,Y) \ar[r]^{\susp} \ar[dr]_{\Lambda} & F^{p+1,q} (X,Y) \ar[d]^{\bott}\\
 & \Omega F^{p,q} (X,Y).
}
\]

The most difficult step is the following result.

\begin{theorem}\label{isomorphism-theorem}
If $M$ is nonempty and of positive dimension and $(X,Y)$ a CW pair, the map $\Lambda: L^{p,q}_{M} (X,Y) \to \Omega F^{p,q} (X,Y)$ is an isomorphism.
\end{theorem}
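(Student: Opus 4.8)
The plan is to recognize $\Lambda$ as a morphism of generalized cohomology theories on finite CW pairs which is an isomorphism on a point, and then to conclude by the Eilenberg--Steenrod comparison principle; the only hard ingredient is the analytic realization statement, Proposition \ref{thm:invertiblepseudos}, which supplies the computation on a point.

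First I would set up the formal framework. For fixed $(p,q)$ the functor $(X,Y)\mapsto \Omega F^{p,q}(X,Y)=F^{p,q}((\bI,\{\pm 1\})\times(X,Y))$ is representable by Theorem \ref{paracompact-comparison-theorem}, hence homotopy invariant and exact; together with the Morita isomorphisms \ref{morita-equivalences} and the Bott maps \ref{bott-map2}, the collection $\{\Omega F^{p,q}\}$ is a periodic cohomology theory graded by $q-p$. I would then verify that $(X,Y)\mapsto L^{p,q}_{M}(X,Y)$ is likewise a cohomology theory on finite CW pairs. Homotopy invariance and finite additivity are immediate from Definition \ref{def-l-group} (concordance and direct sum). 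The one axiom with content is exactness of the sequence of a pair, which I would prove by a clutching argument for families of pseudo-Dirac operators: writing $X=A\cup B$ with $A,B$ closed and $Y\subset A\cap B$, one glues a pair over $A$ and a pair over $B$ that agree over $A\cap B$ up to a chosen concordance, exploiting that pseudo-Dirac operators on a fixed bundle form an affine subspace stable under the cut-offs used in the gluing, and that the condition ``$A_{y,t}$ is $t$-independent for $y\in Y$'' is preserved. The structural maps of $L^{p,q}_{M}$ are the evident analogues of the Morita and Bott maps (the Bott map being $A\mapsto A+sJ$ fibrewise, as in \ref{bott-map}), and one checks that $\Lambda$, induced by the bounded transform $A\mapsto(t\mapsto\normalize{A(t)})$ and Proposition \ref{fredholmproperty:familywise}, is natural and intertwines all of them; thus $\Lambda$ is a morphism of graded cohomology theories on finite CW pairs.

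By the standard comparison theorem (induction over cells, using the five lemma and the long exact sequences of pairs, legitimate because $\Lambda$ commutes with the connecting homomorphisms and we are over finite complexes), it then suffices to treat $X=*$, $Y=\emptyset$. Here $\Omega F^{p,q}(*)=\KO^{q-p-1}(*)$ by Theorem \ref{compact-comparison-theorem} and Lemma \ref{kuipertheorem}, while $L^{p,q}_{M}(*)$ is the Grothendieck group of concordance classes of curves of $\Cl^{p,q}$-linear pseudo-Dirac operators on finite-rank bundles over $M$ with invertible endpoints; because $M$ is nonempty and of positive dimension one may choose a bundle $V$ for which $L^2(M;V)$ is an ample $\Cl^{p,q}$-Hilbert space. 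Proposition \ref{thm:invertiblepseudos} asserts precisely that the colimit over such $V$ of the space of these curves is weakly equivalent, via the bounded transform, to $\Omega\Fred^{p,q}(L^2(M;V))$; applying this over a point, and over $[0,1]$ to identify concordances with homotopies of paths, shows that $\Lambda$ on a point is an isomorphism, which completes the proof modulo Proposition \ref{thm:invertiblepseudos}.

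I expect the genuine obstacle to be Proposition \ref{thm:invertiblepseudos} itself: following Boo{\ss}--Wojciechowski \cite{BW}, one must show that admitting order-zero pseudo-differential perturbations (not merely differential operators) is enough both to deform an arbitrary family of self-adjoint $\Cl^{p,q}$-Fredholm operators into a family of pseudo-Dirac operators and to make invertible endpoints literally invertible, all $\Cl^{p,q}$-equivariantly and in families over $X$. The secondary difficulty is the exactness axiom for $L^{p,q}_{M}$, where the gluing of pseudo-Dirac families over a closed cover must be carried out so as to respect every structural condition of Definition \ref{def-l-group}.
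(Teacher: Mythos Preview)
Your plan takes an unnecessary detour and leaves a real gap. Proposition \ref{thm:invertiblepseudos} is a \emph{space-level} weak homotopy equivalence $\Theta:\fX\to\Omega\Fred^{p,q}(H)$, not merely a computation of $\pi_0$, and the paper uses this strength directly. Any class in $\Omega F^{p,q}(X,Y)$ is represented by a map of pairs $(X,Y)\to(\Omega\Fred^{p,q}(H),\text{constant invertible paths})$; since $(X,Y)$ is a CW pair and $\Theta$ is a weak equivalence, this map lifts along $\Theta$ to a family in $\fX$, and Lemma \ref{existence-ample-bundle} provides an ample bundle $E\to M$ with an invertible basepoint so that $H\cong L^2(M;E)$. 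That is surjectivity of $\Lambda$ for arbitrary $(X,Y)$ in one stroke; the same lifting applied to a concordance (after stabilising to an ample bundle) gives injectivity. No cohomology-theory axioms for $L^{p,q}_{M}$ are needed at all.

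Your route, by contrast, must establish the long exact sequence of a pair for $L^{p,q}_{M}$ \emph{before} $\Lambda$ is known to be an isomorphism. You acknowledge this as a ``secondary difficulty'' and sketch a clutching argument, but do not carry it out; and even granting middle-exactness, a three-term sequence does not suffice to run a cell-by-cell five-lemma induction. For that you need a connecting homomorphism into an adjacent degree, i.e.\ a suspension or Bott isomorphism internal to $L^{p,q}_{M}$, which neither you nor the paper proves directly (the only clean route to it is via $\Lambda$, which is circular here). Equivalently: to feed the induction you would have to know that $\Lambda$ is an isomorphism on every $(D^n,S^{n-1})$, and the way to see that is again Proposition \ref{thm:invertiblepseudos} applied to that CW pair---at which point you might as well apply it to $(X,Y)$ itself, as the paper does. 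In short, you under-use Proposition \ref{thm:invertiblepseudos}: its full homotopical content already handles all CW pairs simultaneously, making the Eilenberg--Steenrod comparison machinery superfluous.
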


This is proven in section \ref{section:invertiblepseudos} and relies on the richness of the class of pseudo-differential operators. An interesting aspect is that $L^{p,q}_{M}$ does not depend on $M$. In section \ref{formal-structures}, we prove certain formal properties of the groups $L^{p,q}:=L^{p,q}_{M}$ and the maps $\susp$ and $\Lambda$; these formal properties are then used to reduce everything to the simple special case $(p,q)=(0,1)$ and $(X,Y)=(*,\emptyset)$. We give an explicit computation for this case, alternatively, one could use the classical spectral-flow-index theorem as proven by Robbin and Salamon \cite{RoSa}. 

\subsection{Spaces of invertible pseudo-differential operators with given symbol}\label{section:invertiblepseudos}

Here we prove Theorem \ref{isomorphism-theorem}, which is essentially a result on spaces of invertible pseudo-differential operators. Throughout this section, we fix a Clifford index $(p,q)$ (the case $(p,q)=(0,1)$ is what is really important for us).

\begin{definition}
Let $M$ be a closed Riemannian manifold and $V \to M$ be a finite-dimensional $\Cl^{p,q}$-Hilbert bundle. For $m \in \bZ$, let $\pseudop{\Cl}{m}(V)\subset \pseudo{m}(V)$ be the space of order $m$, $\Cl^{p,q}$-linear pseudo-differential operators acting on $V$. Let $\pseudop{\Cl}{m}(V)_{sa,odd} \subset \pseudop{\Cl}{m}(V)$ be the subspace of symmetric and odd elements (a similar national convention is used for all $\bZ/2$-graded $*$-algebras in the sequel). These spaces carry the Fr\'echet space topology inherited from $\pseudo{m}(V)$.
\end{definition}

Assume that $V$ has a $\Cl(TM)$-left module structure, so that pseudo-Dirac operators on $V$ exist. Let $\pseudir{V}\subset \pseudop{\Cl}{1}(V)_{sa,odd}$ be the space of ($\Cl^{p,q}$-linear, odd) pseudo-Dirac operators and let $\pseudir{V}^{\times}$ be the open subspace of invertible operators. We assume that $\pseudir{V}^{\times} \neq \emptyset$ and pick a basepoint $B \in \pseudir{V}^{\times}$. Then $\pseudir{V}$ is the affine subspace $B + \pseudop{\Cl}{0}(V)_{sa,odd} \subset \pseudop{\Cl}{1}(V)_{sa,odd}$. Let 
\[
\fX:=\Lopspace
\]
be the space of all smooth families $A: \bI \to \pseudir{V}$ such that $A( 1 )$ is invertible and $A(-1)=B$. There is a map 
\[
\Theta: \fX \to \Omega \Fred^{p,q} (L^2 (M;V)), \; \Theta (A) := (t \mapsto \normalize{A(t)});
\]
one uses Proposition \ref{fredholmproperty:familywise} to show that $t \mapsto \normalize{A(t)}$ defines an element in $\Omega \Fred^{p,q} (L^2 (M;V))$ and that $\Theta$ is continuous.
The analytical core of Theorem \ref{isomorphism-theorem} is the following result.

\begin{proposition}\label{thm:invertiblepseudos}
Let $M$ and $V$ as above. Assume that the Hilbert space $H:=L^2 (M;V)$ is ample and that there exists an invertible $\Cl^{p,q}$-linear, odd pseudo-Dirac operator $B$ on $V$.
Then the map $\Theta$ is a weak homotopy equivalence.
\end{proposition}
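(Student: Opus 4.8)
The plan is to show that $\Theta$ is a weak homotopy equivalence by exhibiting $\fX$ as a model for the homotopy fibre of a suitable fibration, and then matching that with $\Omega\Fred^{p,q}(H)$. The key point is that the space $\pseudir{}(E)$ of pseudo-Dirac operators, being the affine space $B+\pseudop{\Cl}{0}(M;E)$, is contractible; and the subspace $\pseudir{}(E)^{\times}$ of invertibles, by the Booß-Wojciechowski philosophy \cite{BW}, is ''large enough'' to see all of $\cG^{p,q}(H)$ via the bounded transform. Concretely, first I would establish the following two statements: (i) the map $\pseudir{}(E) \to \Fred^{p,q}(H)$, $A\mapsto\normalize{A}$, has image in $\Fred^{p,q}(H)$ and is continuous (this is Proposition \ref{fredholmproperty:familywise} applied to the tautological family over $\pseudir{}(E)$ itself, viewed as a one-point-manifold-bundle base — or rather, the version for a single closed $M$); and (ii) this map is a weak equivalence onto the component(s) it hits — but since $\pseudir{}(E)$ is contractible, what one really needs is that the \emph{restriction} $\pseudir{}(E)^{\times}\to\cG^{p,q}(H)$ is a weak equivalence, i.e. that the inclusion $\pseudir{}(E)^{\times}\hookrightarrow\pseudir{}(E)$ of the invertibles is ''as connected as'' $\cG^{p,q}(H)\hookrightarrow\Fred^{p,q}(H)$.

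Granting those, the argument is formal. The space $\fX = \Lopspace$ is, by definition, the space of smooth paths in $\pseudir{}(E)$ from the fixed basepoint $B$ to some point of $\pseudir{}(E)^{\times}$. Since $\pseudir{}(E)$ is contractible, $\fX$ is weakly equivalent to the homotopy fibre over $B$ of the (smooth) path-space evaluation-at-$1$ map, which is the same as the homotopy fibre of the inclusion $\pseudir{}(E)^{\times}\hookrightarrow\pseudir{}(E)$, i.e. weakly equivalent to $\Omega_{B}\big(\pseudir{}(E)/\pseudir{}(E)^{\times}\big)$ in the appropriate sense. On the other side, $\Omega\Fred^{p,q}(H)$ is the space of paths in $\Fred^{p,q}(H)$ starting at a fixed invertible operator and ending in $\cG^{p,q}(H)$; since $\cG^{p,q}(H)$ is contractible (Lemma \ref{kuipertheorem}), this is weakly equivalent to the homotopy fibre of $\cG^{p,q}(H)\hookrightarrow\Fred^{p,q}(H)$ based at any invertible point. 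The bounded-transform map $A\mapsto\normalize{A}$ carries $\fX$ to $\Omega\Fred^{p,q}(H)$ compatibly with these two fibre-sequence descriptions (it sends $B\mapsto\normalize{B}\in\cG^{p,q}(H)$ and $\pseudir{}(E)^{\times}$ into $\cG^{p,q}(H)$), so by the five lemma applied to the long exact sequences of homotopy groups of the two fibrations, $\Theta$ is a weak equivalence once we know the map of total spaces $\pseudir{}(E)\to\Fred^{p,q}(H)$ and the map of ''invertible parts'' $\pseudir{}(E)^{\times}\to\cG^{p,q}(H)$ are both weak equivalences.

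The first of these is easy: $\pseudir{}(E)$ is contractible and $\Fred^{p,q}(H)$ — no, $\Fred^{p,q}(H)$ is not contractible, so one cannot argue this way; instead one argues directly at the level of the fibre sequences, i.e. one shows the map of \emph{pairs} $\big(\pseudir{}(E),\pseudir{}(E)^{\times}\big)\to\big(\Fred^{p,q}(H),\cG^{p,q}(H)\big)$ induces an isomorphism on all relative homotopy groups. Since the source pair has contractible total space, $\pi_k$ of its ''quotient'' is $\pi_{k-1}$ of $\pseudir{}(E)^{\times}$; likewise for the target since $\cG^{p,q}(H)\simeq *$, $\pi_k$ of its ''quotient'' is $\pi_{k-1}(\Fred^{p,q}(H))$ — wait, that is $\pi_k$ of the base, not the quotient. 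Let me restate: both maps of pairs have the property that total space and subspace comparison reduce, via the respective long exact sequences and contractibility (of $\pseudir{}(E)$ on the left, of $\cG^{p,q}(H)$ on the right), to the single assertion that $\pseudir{}(E)^{\times}\to\cG^{p,q}(H)$ is a weak equivalence. So in the end \textbf{everything comes down to one statement}: the space of invertible pseudo-Dirac operators on the ample $\Cl^{p,q}$-module $L^2(M;E)$ realizes, up to weak homotopy, the space $\cG^{p,q}(H)$ of invertible $\Cl^{p,q}$-Fredholm operators — equivalently (since $\pseudir{}(E)\simeq *$) its homotopy groups vanish, which is the Kuiper-type statement that $\pseudir{}(E)^{\times}$ is weakly contractible.

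\textbf{The main obstacle} is exactly this last point: proving that $\pseudir{}(E)^{\times}$ is weakly contractible (equivalently, that the inclusion into the contractible $\pseudir{}(E)$ is a weak equivalence, equivalently that $\Theta$ lands in and detects all of $\Omega\Fred^{p,q}(H)$). This is the genuine analytic input, in the spirit of \cite{BW}: given a map $S^{k}\to\cG^{p,q}(H)$, or more to the point a compact family of invertible bounded odd self-adjoint Clifford-linear operators, one must deform it — through the bounded transform — to a family of invertible pseudo-Dirac operators, using ampleness of $H=L^2(M;E)$ to absorb the operators into the order-zero pseudo-differential ''tail'' $\pseudop{\Cl}{0}(M;E)$, which acts on $H$ essentially by arbitrary Clifford-linear compact-modulo-smoothing perturbations. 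One needs that any continuous family of order-zero symmetric odd Clifford-linear bounded operators can be approximated by (families of) order-zero pseudo-differential operators, and that adding such to the fixed invertible $B$ keeps one inside $\pseudir{}(E)$; the invertibility is then arranged on the nose using that $B$ is invertible and perturbations can be made small in the relevant operator norm, or by a functional-calculus/spectral-deformation argument as in \cite{AS69}. I expect the bookkeeping for the ampleness identification (matching $L^2(M;E)$ as a $\Cl^{p,q}$-module with an abstract ample module, uniformly in the family) and the density of pseudo-differential operators among all order-zero bounded Clifford-linear operators to be the technically delicate parts; the formal reduction above is routine once Proposition \ref{fredholmproperty:familywise} and Lemmas \ref{kuipertheorem}, \ref{kuipertheoremb} are in hand.
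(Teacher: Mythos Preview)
Your formal reduction is incorrect, and the error is precisely where you yourself hesitated. You correctly observe that $\pseudir{}(E)$ is contractible (so $\fX \simeq \pseudir{}(E)^{\times}$ via evaluation at $1$) and that $\cG^{p,q}(H)$ is contractible (so $\Omega\Fred^{p,q}(H)$ is the honest based loop space). But these two contractibilities sit in \emph{different positions} in the two pairs: on the source side the total space collapses, on the target side the subspace collapses. A five-lemma comparison of the long exact sequences therefore does \emph{not} reduce to the assertion that $\pseudir{}(E)^{\times} \to \cG^{p,q}(H)$ is a weak equivalence; the relative groups are identified with $\pi_{k-1}(\pseudir{}(E)^{\times})$ on one side and $\pi_k(\Fred^{p,q}(H))$ on the other, and the comparison map between them is a connecting homomorphism, not the map induced by $\pseudir{}(E)^{\times}\to\cG^{p,q}(H)$. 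In fact $\pseudir{}(E)^{\times}$ is \emph{not} weakly contractible: it has the weak homotopy type of $\Omega\Fred^{p,q}(H)$, which carries all of $\KO^{q-p-1}$. If your claim were true, $\Theta$ would send a contractible space to $\Omega\Fred^{p,q}(H)$ and could never be a weak equivalence.

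What the paper does instead is insert an intermediate object that makes a fibre-sequence comparison possible: the affine space $B_0 + \Kom^{p,q}(H)$ inside $\Fred^{p,q}(H)$, where $B_0 = \normalize{B}$. This space is convex, hence contractible, and crucially it is the \emph{fibre} of the Calkin quotient $\Fred^{p,q}(H) \to \Cal(H)$ over $q(B_0)$; Lemma~\ref{fibre-lemma} then applies with $K = B_0 + \Kom^{p,q}(H)$, $L = (B_0 + \Kom^{p,q}(H))^{\times}$, $F = \Fred^{p,q}(H)$, $G = \cG^{p,q}(H)$. The analytic input is not a Kuiper-type contractibility statement but Palais' theorem on dense linear maps between locally convex spaces (Theorem~\ref{palais2}): one first linearizes $\Theta$ by replacing $A \mapsto \normalize{A}$ with $A \mapsto S^{1/2} A S^{1/2}$ for $S = (1+B^2)^{-1/2}$ (Lemma~\ref{interpolation-lemma}), and then shows that $\pseudop{\Cl}{-1}(M;E) \to \Kom^{p,q}(H)$ has dense image, whence $\pseudir{}(E)^{\times} \to (B_0 + \Kom^{p,q}(H))^{\times}$ is a weak equivalence (Proposition~\ref{approximation-lemma}). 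Your instinct that density of pseudo-differential operators is the key analytic point is correct, but the target of that density statement is the compact operators, not all bounded operators, and the homotopical role it plays is to identify $\pseudir{}(E)^{\times}$ with a loop space of $\Fred^{p,q}(H)$, not with a point.
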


The idea for the proof of Proposition \ref{thm:invertiblepseudos} that we give is borrowed from Boo{\ss}-Wojciechowski \cite[\S 15]{BW}.
We will use the following general result on homotopy types of open subsets of topological vector spaces, due to Palais.

\begin{theorem}\label{palais2} \cite[Corollary to Theorem 12]{Pal2}
Let $f:V_0\to  V_1$ be a continuous linear map between locally convex topological vector spaces. Assume that $f$ has dense image. Let $U \subset V_1$ be open. Then $f|_{f^{-1}(U)}: f^{-1}(U) \to U$ is a weak homotopy equivalence.
\end{theorem}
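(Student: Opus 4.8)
The statement to prove is Theorem \ref{palais2}, quoted from Palais. Since it is cited as an external result, the "proof" here should sketch the Palais argument rather than invent something new; I will present the standard argument. The plan is to reduce the weak-equivalence claim to showing that $f|_{f^{-1}(U)}$ induces bijections on all homotopy groups, and to prove this by a direct approximation argument: any map of a compact space (a sphere or a disc, relative to its boundary) into $U$ can be pushed, through $U$, into the image of $f$, and any such pushing can be done compatibly with a given map that already lands in $f^{-1}(U)$.

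\textbf{Step 1: set-up and reduction.} Fix a basepoint $u_0 \in U$ and, using density of $\im f$, pick $v_0 \in V_0$ with $f(v_0)$ as close to $u_0$ as we like; by convexity of $U$ (locally convex spaces are in particular locally convex, and one may shrink to a convex neighbourhood, or just work with the affine segment) the segment from $u_0$ to $f(v_0)$ lies in $U$ once $f(v_0)$ is close enough, so without loss of generality the basepoint of $U$ is of the form $f(v_0)$. To prove $f|_{f^{-1}(U)}$ is a weak equivalence it suffices to show that for every $n \geq 0$ and every commutative square consisting of a map $\alpha\colon S^{n-1} \to f^{-1}(U)$ and a map $\beta\colon D^{n} \to U$ extending $f\circ\alpha$, there is a map $\tilde\beta\colon D^{n}\to f^{-1}(U)$ with $\tilde\beta|_{S^{n-1}}=\alpha$ and $f\circ\tilde\beta$ homotopic to $\beta$ rel $S^{n-1}$ inside $U$. (This is the standard ``relative approximation'' reformulation of ``$\pi_n$-iso for all $n$'', hence of weak equivalence; one handles surjectivity on $\pi_n$ and injectivity on $\pi_n$ simultaneously this way.)

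\textbf{Step 2: the approximation.} Given $\beta\colon D^n \to U$, its image $K:=\beta(D^n)$ is compact in $V_1$. Since $U$ is open, there is a convex open neighbourhood (or a ``tube'') $W$ with $K \subset W \subset U$; more precisely, by compactness one finds a continuous ``radius'' allowing a straight-line homotopy: for each $z$, the segment from $\beta(z)$ to any point of $V_1$ within a suitable neighbourhood stays in $U$. Now exploit density of $\im f$: choose finitely many points $w_1,\dots,w_N \in \im f$, say $w_j = f(v_j)$, and a partition of unity $(\lambda_j)$ on $D^n$ subordinate to a cover by sets on which $\beta$ is within the allowed radius of some $w_j$; then $\beta'(z):=\sum_j \lambda_j(z)\,f(v_j)=f\big(\sum_j\lambda_j(z)v_j\big)$ lies in $U$, is in $\im f$, and the straight-line homotopy $s\mapsto (1-s)\beta + s\beta'$ stays in $U$. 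Define $\tilde\beta(z):=\sum_j\lambda_j(z)v_j \in V_0$; by construction $f\circ\tilde\beta=\beta'$ takes values in $U$, so $\tilde\beta$ lands in $f^{-1}(U)$, and $f\circ\tilde\beta\simeq\beta$ rel boundary (after arranging the boundary compatibility, see Step 3).

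\textbf{Step 3: boundary compatibility, and the main obstacle.} The subtle point — and the one I expect to be the main obstacle — is carrying out Step 2 \emph{relative to} the already-given lift $\alpha$ on $S^{n-1}$: the straight-line homotopy and the partition-of-unity approximation must be the identity (or at least stay within $f^{-1}(U)$ and not move $f\circ\alpha$) on the boundary. This is handled by choosing the partition of unity so that near $S^{n-1}$ it refines to one that simply reproduces $\alpha$ (include the ``function $\alpha$ itself'' as one of the pieces via a collar $S^{n-1}\times[0,1]$ and interpolate), and by checking that the straight-line homotopy from $\beta$ to $\beta'$ is constant on $S^{n-1}$ because there $\beta=f\circ\alpha$ is already in $\im f$ and one can take the approximating points to agree with it. Convexity of the intermediate values inside $U$ is what makes all these interpolations legal, and local convexity of $V_1$ is exactly what guarantees the needed convex neighbourhoods of the compact sets involved. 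Once relative approximation is established for all $n$, $f|_{f^{-1}(U)}$ induces isomorphisms on all homotopy groups with all basepoints, hence is a weak homotopy equivalence, which is the assertion of Theorem \ref{palais2}.
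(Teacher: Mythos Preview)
The paper does not give a proof of this statement at all; it is quoted as an external result of Palais (Corollary to Theorem~12 of \cite{Pal2}) and simply invoked. So there is no ``paper's proof'' to compare against, and your decision to sketch the Palais approximation argument is appropriate.

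Your sketch captures the correct mechanism (density of $\im f$, partitions of unity on the compact source, linearity of $f$, and straight-line homotopies in the locally convex target). One point in Step~2 deserves tightening: as written, it is not clear why the convex combination $\beta'(z)=\sum_j \lambda_j(z)\,f(v_j)$ lies in $U$, since the sets $C_j$ need not have convex intersections. The clean fix is to first use compactness of $K=\beta(D^n)$ and openness of $U$ to choose a single convex symmetric $0$-neighbourhood $W$ in $V_1$ with $K+W\subset U$; then refine the cover so that whenever $\lambda_j(z)>0$ one has $f(v_j)-\beta(z)\in W$. Convexity of $W$ then gives $\beta'(z)-\beta(z)\in W$, hence $\beta'(z)\in K+W\subset U$, and the segment from $\beta(z)$ to $\beta'(z)$ stays in $U$ for the same reason. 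With this adjustment your Steps~2 and~3 go through, and the relative version (using a collar near $S^{n-1}$ to damp the approximation to the given lift $\alpha$) is routine.
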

The linear structure in Theorem \ref{palais2} is essential, and the first step is to replace $A \mapsto \normalize{A}$ by a linear map.
Consider $S:= (1+B^2)^{-1/2}$, which is an invertible pseudo-differential operator of order $-1$. 

\begin{lemma}\label{interpolation-lemma}
The map $\Theta':\fX \to \Omega \Fred^{p,q}(H)$, $A \mapsto (t \mapsto S^{1/2}A(t) S^{1/2})$ is continuous and homotopic to $\Theta$.
\end{lemma}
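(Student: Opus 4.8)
The statement to prove is Lemma~\ref{interpolation-lemma}: the map $\Theta'\colon \fX \to \Omega\Fred^{p,q}(H)$, $A \mapsto (t \mapsto S^{1/2} A(t) S^{1/2})$, is continuous and homotopic to $\Theta$. The point of replacing $\normalize{A(t)} = A(t)(1+A(t)^2)^{-1/2}$ by the \emph{bilinear} expression $S^{1/2}A(t)S^{1/2}$ is exactly to make the subsequent application of Palais' theorem (Theorem~\ref{palais2}) possible: the assignment $A \mapsto S^{1/2} A S^{1/2}$ is the restriction of a continuous \emph{linear} map on $\pseudop{\Cl}{0}(M;E)$ (or on the affine space $\pseudir{}(E)$), composed with translation by the basepoint $B$.

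First I would check that $\Theta'$ actually lands in $\Omega\Fred^{p,q}(H)$ and is continuous. For fixed $A(t) \in \pseudir{}(E)$, the operator $S^{1/2}A(t)S^{1/2}$ is bounded (since $S^{1/2}$ is an order $-1/2$ pseudo-differential operator and $A(t)$ has order $1$, the composite has order $0$, hence is bounded on $H$ by Proposition~\ref{pseudo-properties}(5)), self-adjoint (as $S$ is self-adjoint and commutes with nothing in particular but $S^{1/2}A(t)S^{1/2}$ is visibly symmetric), odd and $\Cl^{p,q}$-linear (because $S = (1+B^2)^{-1/2}$ is $\Cl^{p,q}$-linear and \emph{even}, being a function of $B^2$, so $S^{1/2}$ is too, and conjugating an odd Clifford-linear operator by an even Clifford-linear one preserves these properties). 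It is Fredholm: $S^{1/2}$ is invertible, so $S^{1/2}A(t)S^{1/2}$ is Fredholm iff $A(t)$ is, and $A(t)$ is a self-adjoint elliptic pseudo-differential operator of positive order on a closed manifold, hence Fredholm as an unbounded operator with domain $W^1$; its bounded transform is Fredholm, and $S^{1/2}A(t)S^{1/2}$ has the same kernel as $A(t)$ up to the isomorphism $S^{1/2}$, so it is Fredholm with $\ker = S^{-1/2}\ker A(t)$. At the endpoints $t = \pm 1$, $A(t)$ is invertible, hence $S^{1/2}A(t)S^{1/2}$ is invertible, so the path lies in $\Omega\Fred^{p,q}(H)$. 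For $p-q \equiv -1 \pmod 4$ one must additionally check the essential-(in)definiteness condition, but since $S^{1/2}$ is even, positive and $\Cl^{p,q}$-linear, conjugation by it does not change the essential spectral sign of $\omega_{p,q} F \iota$. Continuity in $A$ follows from Proposition~\ref{pseudo-properties}(6): the multiplication map $\pseudo{-1/2} \times \pseudo{1} \times \pseudo{-1/2} \to \pseudo{0} \to \Lin(H)$ is continuous, so $A \mapsto S^{1/2}A S^{1/2}$ is continuous from $\fX$ (with its $C^\infty$ topology on families) to $\Omega\Fred^{p,q}(H)$ (with the norm topology, uniformly in $t \in \bI$ since $\bI$ is compact).

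For the homotopy $\Theta' \simeq \Theta$ I would use the straight-line homotopy combined with Lemma~\ref{nullhomotopy-criterion}-style positivity arguments, or more directly a ``spectral interpolation'' homotopy connecting $A \mapsto A(1+A^2)^{-1/2}$ to $A \mapsto S^{1/2}AS^{1/2} = (1+B^2)^{-1/4}A(1+B^2)^{-1/4}$. The cleanest route: for $\lambda \in [0,1]$ set $S_\lambda(t) := \big((1-\lambda)(1+A(t)^2) + \lambda(1+B^2)\big)^{-1/2}$, which interpolates between $(1+A(t)^2)^{-1/2}$ and $S^2$; one must check that the operator inside the parenthesis is positive and invertible (it is, being a convex combination of the invertible positive operators $1+A(t)^2 \geq 1$ and $1+B^2 \geq 1$) and that the resulting family $\lambda \mapsto S_\lambda(t)^{1/2}A(t)S_\lambda(t)^{1/2}$ is a continuous path in $\Omega\Fred^{p,q}(H)$. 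Each intermediate operator $S_\lambda(t)^{1/2}A(t)S_\lambda(t)^{1/2}$ is self-adjoint, odd, $\Cl^{p,q}$-linear (here one uses that $A(t)^2$ and $B^2$ are both even Clifford-linear, hence so is $S_\lambda(t)$), and Fredholm by the same conjugation-by-invertible argument; invertibility at the endpoints $t = \pm 1$ persists throughout since $A(\pm 1)$ is invertible and conjugation by an invertible operator preserves invertibility. The continuity of $(\lambda, t, A) \mapsto S_\lambda(t)^{1/2}A(t)S_\lambda(t)^{1/2}$ in all variables — where the subtle point is continuity of the functional calculus $T \mapsto T^{1/2}$ on positive invertible operators, which holds in norm because $x \mapsto x^{1/2}$ is continuous (even Lipschitz) on $[1,\infty)$ and the operators are bounded below by $1$ — gives the desired homotopy. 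At $\lambda = 0$ we recover $\Theta$ (since $S_0(t)^{1/2}A(t)S_0(t)^{1/2}$ and $A(t)S_0(t)$ have the same bounded transform up to a homotopy through the polar-decomposition-type argument — or one simply notes $A(1+A^2)^{-1/2}$ and $(1+A^2)^{-1/4}A(1+A^2)^{-1/4}$ are conjugate by the even invertible $(1+A^2)^{1/4}$ and connected by a further straight-line homotopy), and at $\lambda = 1$ we recover $\Theta'$.

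\textbf{Main obstacle.} The genuinely delicate point is not the Fredholm or Clifford bookkeeping but the \emph{uniformity and joint continuity} of the functional-calculus operations: one needs that $(\lambda, t) \mapsto S_\lambda(t)$, and then its square root, vary norm-continuously, \emph{and} that this continuity is compatible with the continuity in the parameter $A \in \fX$ carrying the Fr\'echet topology. Here Proposition~\ref{pseudo-properties}(6) is the essential input — it tells us that $A \mapsto A(t)$ and hence $A \mapsto A(t)^2 + $(constants) is continuous into a suitable $\pseudo{m}$, which maps continuously to bounded operators, and then continuity of the functional calculus on positive invertible bounded operators (uniform over the compact $\bI$) does the rest. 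One must also confirm that at $\lambda=0$ the resulting map is literally $\Theta$ (or homotopic to it by an entirely routine conjugation homotopy), rather than merely ``morally'' equal; this requires writing out that $A(1+A^2)^{-1/2}$ and $(1+A^2)^{-1/4}A(1+A^2)^{-1/4}$ are joined by the obvious homotopy through bounded transforms of the family $s \mapsto (1+A^2)^{-s/2}\cdot A \cdot(1+A^2)^{(s-1)/2}$, $s\in[0,1/2]$ — a computation I would relegate to a one-line remark.
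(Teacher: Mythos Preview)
Your approach is essentially identical to the paper's: the paper's homotopy is exactly
\[
\Theta_s: A \mapsto \bigl(t \mapsto (1 + s B^2 + (1-s) A(t)^2)^{-1/4}\, A(t)\, (1 + s B^2 + (1-s) A(t)^2)^{-1/4}\bigr),
\]
which is your $S_\lambda(t)^{1/2}A(t)S_\lambda(t)^{1/2}$ with $s=\lambda$. Your continuity argument via Proposition~\ref{pseudo-properties}(6) and the Sobolev estimate $\norm{S^{1/2}A S^{1/2}}_{0,0} \leq \norm{S^{1/2}}_{-1/2,0}\norm{A}_{1/2,-1/2}\norm{S^{1/2}}_{0,1/2}$ also matches the paper's.

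One small simplification you missed: at $\lambda=0$ you worry about passing from $(1+A^2)^{-1/4}A(1+A^2)^{-1/4}$ to $A(1+A^2)^{-1/2}$, but since $A$ is self-adjoint it commutes with every continuous function of $A^2$, so these two expressions are \emph{literally equal} and no extra homotopy is needed. Your ``main obstacle'' about joint continuity of the functional calculus is a fair point that the paper simply does not address; your sketch of how to handle it (norm-continuity of $T\mapsto T^{-1/4}$ on operators bounded below by $1$, combined with Proposition~\ref{pseudo-properties}(6)) is the correct way to fill that gap.
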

\begin{proof}
The map $\pseudo{1}(V) \to \Lin (W^{1/2}, W^{-1/2})$ that takes a pseudo-differential operator to its induced map on Sobolev spaces is continuous by Proposition \ref{pseudo-properties}. On the other hand, the estimate

\[
\norm{S^{1/2}A S^{1/2}}_{0,0} \leq \norm{S^{1/2}}_{-1/2,0}\norm{A}_{1/2,-1/2}\norm{S^{1/2}}_{0,1/2}
\]
shows that the map $\Lin (W^{1/2}, W^{-1/2}) \to \Lin (W^{0}, W^{0})$, $A \mapsto S^{1/2}A S^{1/2}$ is continuous as well. Therefore, $\Theta'$ is continuous. A homotopy between $\Theta$ and $\Theta'$ is given by
\[
\Theta_s: A  \mapsto \left( t \mapsto (1 + s B^2 + (1-s) A(t)^2)^{-1/4}  A(t)  (1 + s B + (1-s) A(t)^2)^{-1/4}\right).\qedhere
\]
\end{proof}

For the rest of the proof of Proposition \ref{thm:invertiblepseudos}, we work with $\Theta'$. Recall that we fixed a basepoint $B  \in \pseudir{V}^{\times}$ and we let $B_0:=S^{1/2} B S^{1/2}=\normalize{B}$. Let $\Kom_{\Cl} (H)_{sa,odd} \subset \Kom (H)$ be the space of self-adjoint, odd, Clifford-linear compact operators on $H$ and $(B_0 + \Kom_{\Cl} (H)_{sa,odd})^{\times}$ be the space of all invertible (self-adjoint, odd, Clifford-linear) operators on $H$ that differ from $B_0$ by a compact operator. Observe that we get a map 
\[
\varphi:\pseudir{V}^{\times} \to (B_0 + \Kom_{\Cl} (H)_{sa,odd})^{\times}, \; A \mapsto S^{1/2} A S^{1/2}
\]
(by Rellich's theorem), which is continuous by the argument given in the proof of Lemma \ref{interpolation-lemma}. The next step is to prove:

\begin{proposition}\label{approximation-lemma}
The map $\varphi$ is a weak homotopy equivalence.
\end{proposition}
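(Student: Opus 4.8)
The plan is to show that $\varphi$ fits into a commutative square of maps each of which is known (or easily shown) to be a weak equivalence, and then conclude by two-out-of-three. The relevant target is the classical Fredholm model: by the spectral deformation argument already invoked in the proof of Lemma \ref{kuipertheorem}, the space $(B_0+\Kom^{p,q}(H))^{\times}$ of invertible, self-adjoint, odd, Clifford-linear compact perturbations of the fixed involution-like operator $B_0$ is a model for the ``invertible part'' and is in fact contractible (it deformation retracts onto the space of $\Cl^{p+1,q}$-structures extending the given one, which is contractible by Lemma \ref{kuipertheoremb}). So the real content of Proposition \ref{approximation-lemma} is that the source $\pseudir{}(E)^{\times}$ is \emph{also} weakly contractible, and the strategy is to deduce this from Palais' theorem (Theorem \ref{palais2}) exactly as in the interpolation step.

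Concretely, here is the order of steps. First, observe that $\pseudir{}(E) = B + \pseudop{\Cl}{0}(M;E)$ is an affine subspace of a Fréchet space, and $\pseudir{}(E)^{\times}$ is the preimage under $A\mapsto S^{1/2}AS^{1/2}$ of the open set $(B_0+\Kom^{p,q}(H))^{\times}$ inside the topological vector space $B_0 + \Kom^{p,q}(H)$ (with the norm topology). Second, check that the \emph{linear} map $\pseudop{\Cl}{0}(M;E)\to \Kom^{p,q}(H)$, $T\mapsto S^{1/2}TS^{1/2}$, is continuous (this is the trivial estimate $\norm{S^{1/2}TS^{1/2}}_{0,0}\le \norm{S^{1/2}}_{-1/2,0}\norm{T}_{1/2,-1/2}\norm{S^{1/2}}_{0,1/2}$ together with Proposition \ref{pseudo-properties}, already used in Lemma \ref{interpolation-lemma}, plus Rellich's theorem to see the image lands in compacts) and, crucially, has \emph{dense image}: every self-adjoint, odd, Clifford-linear compact operator can be norm-approximated by $S^{1/2}TS^{1/2}$ with $T$ an order-zero symmetric odd Clifford-linear pseudodifferential operator, because order-zero $\Psi$DO's (e.g. smoothing operators) are dense in $\Kom(H)$ in operator norm and one can arrange the symmetry/parity/Clifford-linearity by averaging (replace $T$ by the appropriate symmetrization $\tfrac14(T - \iota T\iota \pm \cdots)$ projected onto the Clifford-linear part, which does not increase the distance to the target). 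Third, apply Theorem \ref{palais2} with $V_0=\pseudop{\Cl}{0}(M;E)$, $V_1=\Kom^{p,q}(H)$, $f=S^{1/2}(\cdot)S^{1/2}$, and $U=(B_0+\Kom^{p,q}(H))^{\times}$ (shifted by the basepoint so as to be a genuine open subset of a vector space): this gives that $\varphi$, being the restriction $f|_{f^{-1}(U)}$, is a weak homotopy equivalence.

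The main obstacle I anticipate is the density claim in the second step: one must produce \emph{self-adjoint, odd, $\Cl^{p,q}$-linear} order-zero pseudodifferential operators $T$ with $S^{1/2}TS^{1/2}$ approximating an arbitrary element of $\Kom^{p,q}(H)$ in operator norm, and one must be careful that conjugating by $S^{1/2}$ (rather than by $S$) still lands inside the order-zero calculus and that the symmetrization procedure used to enforce the algebraic constraints is compatible with the Fréchet topology and does not destroy the approximation. A clean way is: given $K\in\Kom^{p,q}(H)$, set $K' := S^{-1/2}KS^{-1/2}$, which is a (possibly unbounded) operator of order $1$; approximate $K'$ in a suitable sense by smoothing operators, noting that $K'$ already satisfies the symmetry, parity, and Clifford-linearity relations and these are preserved by a symmetric/equivariant mollification; then $T$ is the resulting order-zero (indeed $-\infty$) operator and $S^{1/2}TS^{1/2}\to K$. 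Everything else — continuity of $\varphi$, applicability of Palais, contractibility of the target — is already in hand from the material quoted above.
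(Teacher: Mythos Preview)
Your ``Concretely'' section is essentially the paper's proof: translate $\varphi$ by the basepoints to a linear map $\theta:\pseudop{\Cl}{0}(M;E)\to\Kom^{p,q}(H)$, $T\mapsto S^{1/2}TS^{1/2}$, identify the invertible loci with an open set $U$ and its preimage, and apply Palais' theorem once density of the image is checked. The paper presents the density step slightly more cleanly by observing that $\theta$ factors as a homeomorphism $\pseudop{\Cl}{0}\to\pseudop{\Cl}{-1}$ followed by the inclusion $\pseudop{\Cl}{-1}\hookrightarrow\Kom^{p,q}(H)$, so one only needs that smoothing operators are dense in compacts (plus the averaging projection to enforce the symmetry/parity/Clifford constraints). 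Your suggested ``clean way'' of forming $K'=S^{-1/2}KS^{-1/2}$ for a general compact $K$ is unnecessary and ill-posed ($K'$ is not a pseudodifferential operator, so speaking of its ``order'' makes no sense); just approximate $K$ directly by smoothing operators $R$ and note $R=\theta(S^{-1/2}RS^{-1/2})$ with $S^{-1/2}RS^{-1/2}$ again smoothing.

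However, your opening paragraph contains a genuine error that you should drop. The space $(B_0+\Kom^{p,q}(H))^{\times}$ is \emph{not} contractible: the spectral deformation retracts it only onto those involutions that differ from $B_0$ by a compact operator, which is a proper subspace of the fibre of $\cS^{p+1,q}\to\cS^{p,q}$, and Lemma~\ref{kuipertheoremb} does not apply to it. In fact the proof of Proposition~\ref{thm:invertiblepseudos} (via Lemma~\ref{fibre-lemma}) shows that this space has the weak homotopy type of $\Omega\Fred^{p,q}(H)$, which is certainly not a point. So the framing ``the real content is that the source is also weakly contractible'' is wrong, and the two-out-of-three square you allude to is never set up. Fortunately none of this is used: Palais' theorem gives the weak equivalence of $\varphi$ directly, without any contractibility input, and that is all you need.
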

\begin{proof}
Consider the continuous linear map $\theta:\pseudop{\Cl}{0}(V)_{sa,odd} \to \Kom_{\Cl} (H)_{sa,odd}$, $\theta (P):= S^{1/2} P S^{1/2}$. Let $U \subset \Kom_{\Cl} (H)_{sa,odd}$ be the open subset of all $Q$ such that $B_0 + Q$ is invertible. There is a commutative diagram

\[
\xymatrix{
\theta^{-1}(U) \ar[r]^{\theta|_{\theta^{-1}(U)}} \ar[d]^{P \mapsto B + P} & U \ar[d]^{Q \mapsto B_0 + Q} \\
\pseudir{V}^{\times} \ar[r]^-{\varphi} & (B_0 + \Kom_{\Cl} (H)_{sa,odd})^{\times};
}
\]
the vertical maps are homeomorphisms. Proposition \ref{approximation-lemma} follows if we can show that $\theta^{-1}(U) \to U$ is a weak homotopy equivalence, and this follows from Theorem \ref{palais2} once we know that $\theta$ has dense image. The map $\theta$ factors as $\pseudop{\Cl}{0}(V)_{sa,odd} \to \pseudop{\Cl}{-1}(V)_{sa,odd} \to \Kom_{\Cl}(H)_{sa,odd}$, the first is $P \mapsto S^{1/2}  P S^{1/2} $ and a homeomorphism, and so we have to prove that $\pseudop{\Cl}{-1}(V)_{sa,odd} \to \Kom_{\Cl}(H)_{sa,odd}$ has dense image.
It is a standard result that infinitely smoothing operators are dense in the space of all compact operators (without the conditions ``self-adjoint, $\Cl^{p,q}$-linear, odd'') and so the image of $\pseudo{0}(V) \to \Kom (H)$ is dense: one picks an orthonormal basis $(e_n)$ in $H=L^2 (M;V)$ consisting of smooth sections. The operators $u \mapsto \langle u, e_m\rangle e_n$ are infinitely smoothing and they span a dense subspace of the compact operators. To see that one can built in the algebraic conditions (``self-adjoint, odd and $\Cl^{p,q}$-linear''), let $\Gamma \subset \Cl^{p,q}$ be the multiplicative subgroup generated by all $e_i, \eps_j$. Then the formula 

\begin{equation}\label{averaging-operator}
K \mapsto  \frac{1}{4|\Gamma|} \left( \sum_{\gamma \in \Gamma} \gamma K \gamma^{-1} -  \iota  \gamma K \gamma^{-1} \iota \right) +  \frac{1}{4|\Gamma|}\left( \sum_{\gamma \in \Gamma} \gamma K \gamma^{-1} -  \iota  \gamma K \gamma^{-1} \iota \right)^*
\end{equation}
defines a projection $\Kom (H) \to \Kom_{\Cl} (H)_{sa,odd}$ which maps the image of $\pseudo{-1}(V)$ to the image of $\pseudop{\Cl}{-1}(V)_{sa,odd}$.
\end{proof}

\begin{corollary}\label{approximation-lemma3}
The map 
\[
\fX \to \map ( (\bI, \partial \bI, -1); (B_0 + \Kom_{\Cl} (H)_{sa,odd},(B_0 + \Kom_{\Cl} (H)_{sa,odd})^{\times},B_0)),
\]
defined by $A \mapsto (t \mapsto S^{1/2} A(t) S^{1/2})$, is a weak homotopy equivalence.
\end{corollary}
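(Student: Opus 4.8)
The plan is to deduce this from Proposition \ref{approximation-lemma} together with soft homotopy-theoretic arguments about path spaces. Write $\cP := \pseudir{}(E)$ and $\cK := B_0 + \Kom^{p,q}(H)$; both are affine spaces --- translates of the locally convex spaces $\pseudop{\Cl}{0}(M;E)$ and $\Kom^{p,q}(H)$ --- hence contractible, and the map $\varphi$ of \ref{comparison-map-palais} carries $\cP^{\times}$ into $\cK^{\times}$ and $B$ to $B_0$. With this notation, the source of the map in the Corollary is $\mathrm{map}_{C^{\infty}}((\bI,\partial\bI,-1);(\cP,\cP^{\times},B))$ and the target is $\map((\bI,\partial\bI,-1);(\cK,\cK^{\times},B_0))$, and the map is postcomposition with $\varphi$.

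First I would pass from smooth to continuous paths: the inclusion $\mathrm{map}_{C^{\infty}}((\bI,\partial\bI,-1);(\cP,\cP^{\times},B)) \hookrightarrow \map((\bI,\partial\bI,-1);(\cP,\cP^{\times},B))$ is a weak homotopy equivalence. This is the usual density of smooth maps of a compact manifold into a locally convex space: given a compact parameter family of continuous paths, one first deforms it to be constant near $\partial\bI$ and then mollifies in the interior; mollification preserves the affine subspace $\pseudop{\Cl}{0}(M;E)$ (it is a limit of convex combinations), converges in every continuous seminorm, and fixes the endpoint values, so the triple conditions --- in particular invertibility at $\partial\bI$, because $\cP^{\times}$ is open --- survive the deformation.

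Next I would identify each continuous triple mapping space, up to weak equivalence, with its space of right endpoints. For a contractible affine space $\cA$ with open subspace $\cA^{\times}\ni a_0$, the based path space $\{\gamma\in\map(\bI,\cA):\gamma(-1)=a_0\}$ maps by evaluation at $1$ to $\cA$ as a Hurewicz fibration, and its fibre over $c$ --- the space of paths from $a_0$ to $c$ --- is homeomorphic, after subtracting the affine-linear interpolation, to a locally convex space (paths in the vector space underlying $\cA$ that vanish on $\partial\bI$), hence contractible. Restricting over the open subspace $\cA^{\times}$ yields a Hurewicz fibration $\map((\bI,\partial\bI,-1);(\cA,\cA^{\times},a_0))\to\cA^{\times}$ with contractible fibres, so its long exact sequence shows the projection is a weak homotopy equivalence. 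Applying this to $(\cP,\cP^{\times},B)$ and $(\cK,\cK^{\times},B_0)$, and using that $\varphi$ commutes with the evaluations, one obtains a commuting square
\[
\xymatrix{
\map((\bI,\partial\bI,-1);(\cP,\cP^{\times},B)) \ar[r] \ar[d] & \map((\bI,\partial\bI,-1);(\cK,\cK^{\times},B_0)) \ar[d] \\
\cP^{\times} \ar[r]^{\varphi} & \cK^{\times}
}
\]
in which the two vertical maps are weak homotopy equivalences and the bottom map is a weak homotopy equivalence by Proposition \ref{approximation-lemma}; hence so is the top map. Composing with the weak equivalence of the smoothing step proves the Corollary.

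The hard part is entirely contained in Proposition \ref{approximation-lemma} (hence in Palais' Theorem \ref{palais2} and in the density of the algebraically constrained smoothing operators established there); everything else is formal. The only points that require a little care are running the mollification of the first step compatibly over a compact parameter family while keeping it stationary near $\partial\bI$, and recalling that evaluation at an endpoint of a based path space is a Hurewicz fibration --- both classical.
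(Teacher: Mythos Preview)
Your proposal is correct and follows essentially the same approach as the paper: reduce the path-space statement to the endpoint statement (Proposition~\ref{approximation-lemma}) using that both ambient affine spaces are contractible, and separately handle the passage from smooth to continuous paths. Your evaluation-at-$1$ argument makes explicit what the paper compresses into a single ``Hence''; the only genuine difference is that for the smoothing step you argue by direct mollification, whereas the paper invokes Theorem~\ref{palais2} again (with $V_0$ the smooth based paths, $V_1$ the continuous ones, and $U$ the open condition of invertibility at $t=1$).
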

\begin{proof}
The spaces $B_0 + \Kom_{\Cl} (H)_{sa,odd}$ and $\pseudir{V}$ are convex, and by Proposition \ref{approximation-lemma}, the map between the spaces of invertibles is a weak homotopy equivalence. Hence 
\[
(\pseudir{V}, \pseudir{V}^{\times}, B) \to (B_0 + \Kom_{\Cl} (H)_{sa,odd},(B_0 + \Kom_{\Cl} (H)_{sa,odd})^{\times}, B_0)  
\]
induces a weak homotopy equivalence on mapping spaces.
By smooth approximation and another application of Theorem \ref{palais2}, one shows that the inclusion 
\[
\fX=\map_{C^{\infty}} ((\bI, \partial \bI, -1); (\pseudir{V}, \pseudir{V}^{\times}, B) ) \to \map ((\bI, \partial \bI, -1); (\pseudir{V}, \pseudir{V}^{\times}, B))
\]
is a weak homotopy equivalence.
\end{proof}

The third and last step of the proof of Proposition \ref{thm:invertiblepseudos} is purely topological, and needs another lemma.

\begin{lemma}\label{fibre-lemma}
Let 
\[
\xymatrix{
L \ar[d]^{j} \ar[r]^{\subset} & K \ar[d]\\
G \ar[r]^{\subset} \ar[d] & F \ar[d]^{q}\\
C \ar@{=}[r] & C
}
\]
be a commutative diagram of fibrations, and let $y \in L$ be a basepoint. Assume that $G$ and $K$ are contractible. Then the map 
\[
\map( (I, \partial I, 0) ; (K,L,y)) \to \map ((I, \partial I); (F,G)) \simeq \Omega F
\]
induced by the inclusion $K \subset F$ is a weak homotopy equivalence.
\end{lemma}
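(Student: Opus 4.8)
The plan is to factor the map under consideration and peel off one fibration at a time. Write $\mathcal{F}:=\map((I,\partial I);(F,G))$ and $\mathcal{K}:=\map((I,\partial I,0);(K,L,y))$, and put $c_0:=q(y)\in C$. That the displayed diagram is one \emph{of fibrations} means, concretely: $q\colon F\to C$ and its restriction $q|_G\colon G\to C$ are Serre fibrations, $K=q^{-1}(c_0)$, and $L=G\cap K$ (the left square is a pullback). Since the inclusions $K\subset F$, $L\subset G$ send a path starting at $y$ to a path starting at $y$, the map in question factors as $\mathcal{K}\hookrightarrow\mathcal{F}_y\hookrightarrow\mathcal{F}$, where $\mathcal{F}_y:=\map((I,\partial I,0);(F,G,y))$ is the subspace of paths that begin at $y$. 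I will show both inclusions are weak homotopy equivalences; the claim $\mathcal{F}\simeq\Omega F$ drops out on the way.

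First I would dispose of $\mathcal{F}_y\hookrightarrow\mathcal{F}$ and of $\mathcal{F}\simeq\Omega F$ using only the standard fact that evaluation at the endpoints is a Serre fibration on path spaces (the square retracts onto a union of three of its sides). Thus $\mathrm{ev}_0\colon\mathcal{F}\to G$ is a Serre fibration with fibre $\mathcal{F}_y$ over $y$, and $(\mathrm{ev}_0,\mathrm{ev}_1)\colon\mathcal{F}\to G\times G$ is a Serre fibration with fibre $\Omega_y F$ over $(y,y)$. As $G$ — hence $G\times G$ — is contractible, the inclusions of both fibres are weak equivalences, which yields $\mathcal{F}_y\hookrightarrow\mathcal{F}$ and simultaneously $\mathcal{F}\simeq\Omega F$.

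The real content is the weak equivalence $\mathcal{K}\hookrightarrow\mathcal{F}_y$, and here the fibration hypotheses on $q$ and $q|_G$ finally enter. Post-composition with $q$ gives $q_*\colon\mathcal{F}_y\to\mathcal{C}$, where $\mathcal{C}:=\map((I,0);(C,c_0))$ is the based path space of $C$, which is contractible. The crucial observation is that $\mathcal{K}$ is \emph{exactly} the fibre of $q_*$ over the constant path $\mathrm{const}_{c_0}$: if $q\circ\gamma$ is constant then $\gamma$ has all its values in $q^{-1}(c_0)=K$, and the endpoint conditions $\gamma(0)=y$, $\gamma(1)\in G$ then read $\gamma(1)\in G\cap K=L$, which are precisely the conditions defining $\mathcal{K}$. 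Granting that $q_*$ is a Serre fibration, the inclusion $\mathcal{K}\hookrightarrow\mathcal{F}_y$ of its fibre over a point of the contractible base $\mathcal{C}$ is a weak homotopy equivalence, and composing with the previous step finishes the argument. (In the intended application $F=\Fred^{p,q}(H)$, $G=\cG^{p,q}(H)$, $K=B_0+\Kom^{p,q}(H)$, $L=(B_0+\Kom^{p,q}(H))^{\times}$, $y=B_0$ and $C$ is the quotient of $\Fred^{p,q}(H)$ by compact perturbations; there $K=q^{-1}(c_0)$ and $L=G\cap K$ hold on the nose.)

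The only point demanding genuine, if routine, care — and the one place where \emph{both} fibration hypotheses are needed — is the verification that $q_*\colon\mathcal{F}_y\to\mathcal{C}$ is a Serre fibration. A homotopy-lifting problem for $q_*$ transposes, after adjunction, into the problem of extending a partial lift defined on $D^k\times S$ over a prescribed map $D^k\times[0,1]^2\to C$, where $S$ is the union of three of the four sides of the square (the side left free being the terminal end of the homotopy). One first extends across the edge carrying the moving endpoint, using the fibration $q|_G$ to keep that endpoint inside $G$; then, since the resulting three-sided frame $S$ is a retract of the square, one fills in the remainder using the fibration $q$. I expect this bookkeeping to be the main — essentially the only — obstacle; everything else is formal manipulation of path spaces and the long exact sequence of a fibration over a contractible base.
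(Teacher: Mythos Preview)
Your argument is correct and complete; the only delicate point is the verification that $q_*\colon\mathcal{F}_y\to\mathcal{C}$ is a Serre fibration, and your sketch of that (first lift along the $t=1$ edge using $q|_G$, then fill the square using $q$ and the fact that three sides retract the square) is exactly right.

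Your route differs from the paper's. The paper's sketch instead identifies the two mapping spaces separately: since $K$ is contractible, evaluation at $1$ gives a weak equivalence $\map((I,\partial I,0);(K,L,y))\simeq L$; since $G$ is contractible, $\map((I,\partial I);(F,G))\simeq\Omega F$. One then reads off from the long exact sequences of the two vertical fibrations (using $K\simeq *$ and $G\simeq *$) that $\pi_n(L)\cong\pi_{n+1}(C)\cong\pi_{n+1}(F)$, and finally one checks that the map in the lemma realises this natural isomorphism. Your factorisation $\mathcal{K}\hookrightarrow\mathcal{F}_y\hookrightarrow\mathcal{F}$, with each step a fibre inclusion over a contractible base, has the advantage that the weak equivalence of the \emph{given map} comes out directly, with no separate diagram chase needed to confirm that the induced isomorphism on homotopy groups is the expected one; the paper's approach trades that verification for not having to prove that $q_*$ is a fibration.
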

\begin{proof}
The map factors as 
\[
\map( (I, \partial I, 0) ; (K,L,y)) \to \map ((I, \partial I,0); (F,G,j(y))) \to \map ((I, \partial I); (F,G)) ,
\]
and the second map is a weak equivalence since $G$ is contractible. The effect of the first map on $\pi_k$ is easily identified with the map $\pi_{k+1} (K,L,y) \to \pi_{k+1}(F,G,j(y))$ induced by the inclusion. This latter map is an isomorphism by \cite[Proposition 6.3.8]{tD}.
\end{proof}

\begin{proof}[Proof of Proposition \ref{thm:invertiblepseudos}]
One applies the previous lemma. Let $F\subset \Fred^{p,q} (H)$ be the component containing $\cG^{p,q}$, $G =\cG^{p,q}$. As we assumed that $H$ is ample, $G$ is contractible by Lemma \ref{kuipertheorem}.
Let $C$ be the image of $F$ under the quotient map $q:\Lin_{\Cl} (H)_{sa,odd} \to \Cal_{\Cl}(H)_{sa,odd}:= (\Lin_{\Cl}(H)/\Kom_{\Cl}(H))_{sa,odd}$ to the Calkin algebra. We define $y:= B_0 \in G$. Then the fibers over $q(y) \in C$ are $L= (B_0 + \Kom_{\Cl} (H)_{sa,odd})^{\times} \subset K = (B_0 + \Kom_{\Cl} (H)_{sa,odd})$, respectively. 
The quotient map $q: F \to C$ and the restriction $q: G \to C$ are Serre fibrations. This is a folklore result, stated and proven as Proposition \ref{fibretheorem} below.
Invoking Corollary \ref{approximation-lemma3} and Lemma \ref{fibre-lemma} finishes the proof.
\end{proof}

We now turn to the proof of Theorem \ref{isomorphism-theorem}. The first step is to show that here exist enough $\Cl^{p,q}$-bundles on $M$ so that the statement has a chance to be true.

\begin{lemma}\label{existence-ample-bundle}
Assume that $M \neq \emptyset$ is a closed Riemannian manifold of positive dimension. Then there exists a $\Cl^{p,q}$-bundle $V \to M$ such the Hilbert space $L^2 (M;V)$ is ample and such that there is an invertible, $\Cl^{p,q}$-linear, odd pseudo-Dirac operator on $V$.
\end{lemma}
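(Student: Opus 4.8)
The plan is to take $E$ to be a tensor product $S \otimes \underline{W}$ of a fixed $\Cl(TM)$-module bundle carrying a pseudo-Dirac operator with a suitably chosen \emph{constant} coefficient module $W$, and then to remove the kernel of the Dirac operator by a smoothing perturbation. First, fix a finite-rank $\Cl(TM)$-module bundle $S \to M$ together with a symmetric elliptic first-order operator $D_0$ on $S$ whose leading symbol satisfies $\symb_{D_0}(\xi)^2 = |\xi|^2$; one may take $S = \Cl(TM)$ with $\Cl(TM)$ acting by left multiplication and $D_0$ the symmetrization of $\sum_i (\text{left multiplication by }e_i)\circ \nabla^{\mathrm{LC}}_{e_i}$. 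Since $M$ is closed, $D_0$ is essentially self-adjoint with discrete spectrum and finite-dimensional kernel, and by elliptic regularity $\ker D_0 \subset C^{\infty}(M;S)$; hence the orthogonal projection $P_0$ onto $\ker D_0$ is a smoothing operator, so it lies in $\pseudo{0}(S)$ by Proposition \ref{pseudo-properties}(4). As $P_0$ commutes with $D_0$, the operator $D_0 + P_0$ is again a pseudo-Dirac operator on $S$ (same leading symbol, still symmetric and elliptic) and it is now \emph{invertible}, since $0$ has been removed from its spectrum.

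Second, choose $W$. Let $W$ be the direct sum of the (one or two) irreducible $\Cl^{p+1,q}$-modules, with its $\Cl^{p+1,q}$-Hilbert space structure given by operators $\iota, e_1, \dots, e_{p+1}, \eps_1, \dots, \eps_q$, and regard $W$ as a $\Cl^{p,q}$-Hilbert space by forgetting $e_{p+1}$. Two properties of this $W$ will be used. (i) The operator $u_W := c_W(e_{p+1}) c_W(\iota)$ is odd, self-adjoint, $\Cl^{p,q}$-linear, and satisfies $u_W^2 = 1$; this is the usual dictionary (compare the proof of Lemma \ref{kuipertheorem}) between odd self-adjoint $\Cl^{p,q}$-linear involutions and extensions of the $\Cl^{p,q}$-structure to a $\Cl^{p+1,q}$-structure. (ii) The restriction of $W$ to $\Cl^{p,q}$ contains \emph{every} irreducible $\Cl^{p,q}$-module: the regular representation of $\Cl^{p+1,q}$ is a sum of its irreducibles with positive multiplicities and restricts, as a $\Cl^{p,q}$-module, to one containing $\Cl^{p,q}$ as a submodule, hence containing every $\Cl^{p,q}$-irreducible; therefore every $\Cl^{p,q}$-irreducible already occurs in the restriction of one of the $\Cl^{p+1,q}$-irreducibles, i.e. in the restriction of $W$.

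Now put $E := S \otimes \underline{W}$ with $\underline{W} = M \times W$, letting $\Cl(TM)$ act on the $S$-factor and $\Cl^{p,q}$ act on the $W$-factor, with grading $\iota_E := 1 \otimes \iota_W$; this is a finite-rank graded $\Cl^{p,q}$-Hilbert bundle equipped with a $\Cl(TM)$-left module structure, so pseudo-Dirac operators on $E$ are defined. For ampleness, identify $L^2(M;E) \cong L^2(M;S) \otimes W$ as $\Cl^{p,q}$-Hilbert spaces, the Clifford action sitting entirely in the $W$-factor; since $\dim M > 0$ and $S$ has positive rank, $L^2(M;S)$ is infinite-dimensional and separable, so $L^2(M;E)$ is a countable direct sum of copies of $W$ and therefore, by property (ii), contains each irreducible $\Cl^{p,q}$-module with infinite multiplicity. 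Finally set $B := (D_0 + P_0) \otimes u_W$: its leading symbol is $\symb_{D_0}(\xi) \otimes u_W$, which squares to $|\xi|^2 \otimes u_W^2 = |\xi|^2$; it is symmetric (a tensor product of self-adjoint operators on orthogonal factors), it is odd and $\Cl^{p,q}$-linear because $u_W$ is, and it is invertible because both tensor factors are. Thus $B$ is an invertible pseudo-Dirac operator on $E$, and the lemma is proved.

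The only real subtlety is the choice of $W$: one needs, simultaneously, that tensoring with $L^2(M;S)$ produces an ample $\Cl^{p,q}$-Hilbert space \emph{and} that $W$ admits an odd, $\Cl^{p,q}$-linear operator squaring to $1$ — a single irreducible $\Cl^{p,q}$-module admits no such operator once $p,q>0$. This is precisely what forces $W$ to be (the restriction of) a $\Cl^{p+1,q}$-module; everything else is routine given the properties of pseudo-differential operators recorded in Proposition \ref{pseudo-properties}.
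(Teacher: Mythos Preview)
Your proof is correct and takes a genuinely different route from the paper's.

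The paper argues as follows: start with a $\Cl^{0,0}$-Dirac bundle $E_0$, tensor with a module $N$ containing every irreducible $\Cl^{p,q}$-module to get ampleness, then pass to $E_2 = E_1 \oplus E_1^{op}$. On this doubled bundle every pseudo-Dirac operator has index zero in $\KO^{q-p}$ (by Lemma~\ref{additiveinverse}), so some compact perturbation is invertible; one then approximates the compact perturbation by a smoothing operator to stay inside the pseudo-differential class.

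Your argument avoids the $K$-theoretic step entirely. You make the \emph{ungraded} operator $D_0$ invertible first, by the elementary device of adding the finite-rank projection onto its kernel, and only then introduce the $\Cl^{p,q}$-structure via the coefficient module $W$. The clever point is your choice of $W$ as (the restriction of) a sum of $\Cl^{p+1,q}$-irreducibles: this simultaneously guarantees ampleness and supplies the odd self-adjoint $\Cl^{p,q}$-linear involution $u_W = e_{p+1}\iota$ needed to make $(D_0+P_0)\otimes u_W$ odd and Clifford-linear. In effect, where the paper doubles the bundle to $E_1 \oplus E_1^{op}$ in order to force the index obstruction to vanish, you achieve the same effect by restricting from $\Cl^{p+1,q}$ (your $u_W$ plays the role the paper's doubling plays).

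What each approach buys: the paper's argument is shorter to state given the surrounding machinery and makes transparent \emph{why} one expects an invertible representative (vanishing index). Your argument is more self-contained and explicit---it produces the invertible operator directly rather than via an existence statement---and it does not invoke Lemma~\ref{additiveinverse} or the density of smoothing operators in the compacts.
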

\begin{proof}
Start with any nonzero $\Cl^{0,0}$-Dirac bundle $V_0 \to M$, for example, one could take $V_0 := \Lambda^* T^* M$. Let $N_0$ be a finite-dimensional graded $\Cl^{p,q}$-module that contains each of the finitely many irreducible graded real $\Cl^{p,q}$-modules and consider the $\Cl^{p,q}$-Dirac bundle $V_1:=V_0 \grotimes N_0$ (graded tensor product). The $\Cl^{p,q}$-Hilbert space $L^2 (M;V_1)$ is then ample. Consider $V = V_1 \oplus V_{1}^{op}$.

There exists a $\Cl^{p,q}$-linear odd pseudo-Dirac operator $A$ on $V$. For example, one can take an arbitrary Dirac operator on $V_1$ and apply the operation (\ref{averaging-operator}) to obtain a $\Cl^{p,q}$-linear and odd operator $A_1$ and define $A := A_1 \oplus A_1$. By Lemma \ref{additiveinverse}, $A$ has index zero in $\KO^{q-p}(X)$. Therefore, there exists a compact operator $R$ such that $A+R$ is invertible. Among the compact operators, those with a smooth integral kernel lie dense and thus we can pick $R$ to be a smoothing operator. Then $A+R$ is the desired invertible pseudo-Dirac operator.
\end{proof}

\begin{proof}[Proof of Theorem \ref{isomorphism-theorem}]
First we prove surjectivity. Any element in $\Omega F^{p,q}(X,Y)$ can be represented as the class $[H,F]$, where $H$ is an ample $\Cl^{p,q}$-Hilbert space and $F: X \to \Omega \Fred^{p,q}(H)$ a map which maps $Y$ to constant paths of invertible operators. 
By Lemma \ref{existence-ample-bundle}, there exists an ample $\Cl^{p,q} $-bundle $V \to M$ (i.e. $L^2 (M;V)$ is ample) and an invertible pseudo-Dirac operator $B$ on $V$.
It follows that $H \cong  L^2 (M;V)$, since ample $\Cl^{p,q}$-linear Hilbert spaces are unique up to isomorphism. Thus $F$ can be assumed to be a map $X \to \Omega \Fred^{p,q} (L^2(M;V))$ that sends $Y$ to paths of invertible operators. By Proposition \ref{thm:invertiblepseudos}, $F$ is homotopic to a family of smooth curves of pseudo-Dirac operators beginning at $B$ and ending with an invertible one (here we used that $(X,Y)$ is a CW pair, as the map in Proposition \ref{thm:invertiblepseudos} is only a weak homotopy equivalence). This proves surjectivity.

Injectivity is by a similar argument. Let $\sum_i a_i [V_i,A_i] \in L^{p,q}_{M}(X,Y)$ be an element in the kernel of $\Lambda$. Due to the relation $[V,A]=-[V,A']$ in Definition \ref{def-l-group}, we can assume that all coefficients $a_i\in \bZ$ are positive. By the direct sum relation, we can represent the element by a single $[V,A]$. If $V$ is not ample, we pick an ample bundle $V'$ and the constant invertible family $A'$ (Lemma \ref{existence-ample-bundle}). Then $[V',A']=0$ and the bundle $V \oplus V'$ is ample. Thus, without loss of generality, the element is $[V,A]$ with an ample $V$. The statement that $\Lambda [V,A]=0$ means that the family $\Lambda(A)$ of Fredholm operators is homotopic to an invertible family, by Theorem \ref{compact-comparison-theorem}. Again, by Proposition \ref{thm:invertiblepseudos}, we can lift this homotopy to a homotopy of pseudo-Dirac operators and this means, by the homotopy relation in $L^{p,q}_{M}(X,Y)$, that $[V,A]=0$.
\end{proof}

\subsection{From Theorem \ref{main-index-theorem} to Theorem \ref{mainresult-homotopyversion}: Deformation of symbols}\label{hard-gauging}

Let $M$ be a closed manifold and let $(X,Y)$ be a finite CW pair. Let $E \to X \times \bR \times M$ be a $\Cl^{p,q}$-Hilbert bundle, which is cylindrical over $Y \times \bR$ and outside $X \times \bI$. In the situation of Theorem \ref{mainresult-homotopyversion}, we have a family $g_{(x,t)}$ of Riemannian metrics on $M$ and a family $A_{(x,t)}$ of pseudo-Dirac operators, subject to the conditions from Assumption \ref{assumptions:familydirac}. We can apply the gauging trick as in the second step of the proof of Proposition \ref{fredholmproperty:familywise} to make the bundle $E$ constant in $\bR$-direction (as a $\Cl^{p,q}$-Hilbert bundle). Thus, we can assume that $E= \bR \times V$, for some $\Cl^{p,q}$-bundle $V \to X \times M$.  
If the Riemannian metric $g_{(x,t)}$ were independent of $(x,t)$, we would be in the situation of Theorem \ref{main-index-theorem} and could conclude that $\bott (\susp (E,A))= \Lambda (E,A)$, as required.

However, Theorem \ref{mainresult-homotopyversion} does not assume that $g_{(x,t)}$ does not depend on $(x,t)$, and for the application to positive scalar curvature, it is of course essential to allow for varying metrics. We could try to apply the gauging trick to the metrics $g_{(x,t)}$, as in the proof of Proposition \ref{fredholmproperty:familywise}, and turn them into the Riemannian metric $g_0$, which does not depend on $(x,t)$. However, changing the operator family as in (\ref{defn:phi-gauging}) leaves the symbol of $A_{(x,t)}$ invariant. Therefore, the transformed operator is no longer a pseudo-Dirac operator, because the square of the leading symbol is $\xi \mapsto g_{(x,t)} (\xi, \xi)$, and this is \emph{different} from $g_0(\xi, \xi)$. However, we have achieved something, and summarize our progress.

\begin{proposition}\label{wrong-gauging}
Let $X$, $Y$, $M$, $E=\bR \times V$ and $(g_{(x,t)})_{(x,t)}$ be as in Assumption \ref{assumptions:familydirac}. Let $(A_{(x,t)})_{(x,t)}$ be a family of pseudo-Dirac operators, satisfying Assumption \ref{assumptions:familydirac}. Let $g_0$ be a fixed Riemannian metric on $M$. Then there exists a family of pseudo-differential operators $(B_{(x,t)})_{(x,t)}$ such that
\begin{enumerate}
\item The family $(B_{(x,t)})$ satisfies Assumption \ref{assumptions:familydirac} with respect to the metric $g_0$.
\item $\symb_{B_{(x,t)}} =\symb_{A_{(x,t)}}$, in particular $\symb_{B_{(x,t)}} (\xi)^2 = g_{(x,t)} (\xi, \xi)$.
\item $\Lambda (V,B) = \Lambda (V,A) \in \Omega F^{p,q} (X,Y)$ and $\susp (V,B) = \susp (V,A) \in F^{p+1,q} (X,Y)$.
\end{enumerate}
\end{proposition}

The remaining task is to show that we can deform the operator family $B$ back into a family of pseudo-Dirac operators, without changing either index. 

\begin{proposition}
Assume the notations of Proposition \ref{wrong-gauging}. Then there exists a family $B_{(s,x,t)}$, $(s,x,t)\in [0,1] \times X \times \bR$, satisfying Assumption \ref{assumptions:familydirac} with base space pair $([0,1] \times X, [0,1] \times Y)$ such that $B_{(0,x,t)} = B_{(x,t)}$ and such that $B_{(1,x,t)}$ is a pseudo-Dirac operator. 
\end{proposition}

Writing $(B_s)_{(x,t)}:= B_{(s,x,t)}$, it then follows that $\Lambda (V, B_0)=\Lambda(V,B_1)$ and $\susp (V,B_0)=\susp (V,B_1)$, as claimed.

\begin{proof}
As all operators are independent of $t$ for $|t|>1$, we restrict our attention to $X \times \bI$. Let 
\[
X \times \bI \stackrel{q}{\to} Z:= X \times \bI/\sim,\; (x,t) \sim (x',t') :\Leftrightarrow x=x' \in Y. 
\]
The initial family $B_{(x,t)}$ is pulled back via $q$, and we shall construct the family $B_{(s,x,t)}$ over $Z$ pull it back using $q$, to guarantee $t$-independence over $Y$. We denote points in $Z$ by $(x,t)$. We write $Z_0:=:=(Y \times \bI \cup X \times \partial \bI)/\sim$.

We first construct the symbol $b{(s,x,t)}$ of $B_{(s,x,t)}$ and then show how to lift the deformation of symbols to a deformation of operators.
Let $h_{(x,t)}$ be the $g_0$-positive definite endomorphism of $T^* M$ such that
\[
g_{(x,t)} (h_{(x,t)} \xi, h_{(x,t)}\xi)= g_0 ( \xi,\xi)
\]
holds for all $\xi \in T^* M$. As explained in Observation \ref{gauging-lemma}, we can pick $h_{(x,t)}$ to depend smoothly on $t$ and continuously on $x$. Let 
\[
b_{(s,x,t)} (\xi) := \symb_{B_{(x,t)}} (((1-s)+ s h_{(x,t)}) \xi), \; (s,x,t) \in [0,1] \times Z
\]
for unit cotangent vectors $\xi$ of length $1$. Clearly, $b_{(s,x,t)}$ is an elliptic symbol which is $\Cl^{p,q}$-linear, self-adjoint and odd.

Pick a basepoint $o \in X$ and let $V_o:= V|_{o\times M}$. Let $H$ be the topological group of all even, $\Cl^{p,q}$-linear isometries of $V_o$. Then there is an $H$-principal bundle $P \to Z$, constant in $\bI$-direction and an isomorphism $V \cong P \times_H V_o$ of bundles over $Z$.

Let $\Symb_0 (V_o)$ be the $*$-algebra of Clifford-linear smooth symbols of order $0$ on $V$. More precisely, it is the algebra of all smooth sections of the bundle $\pi^* \End_{\Cl} (V) \to  ST^* M $, the bundle of $\Cl^{p,q}$-linear endomorphisms of $V$, pulled back to the unit cotangent bundle of $M$. Let $\overline{\Symb}_0 (V_o)$ be the closure under the maximum norm; this is a $C^*$-algebra. It is a graded $C^*$-algebra, by the even/odd-grading and the maximum norm.

The closure $\Pseudop{\Cl}{0}(V_o)$ under the $\norm{\_}_{0,0}$-operator norm is a graded $C^*$-algebra. By \cite[(5.2)]{AS1}, the symbol map $\Pseudop{\Cl}{0}(V_o) \to \overline{\Symb}_0 (V_o)$ is a surjective $*$-homomorphism. The group $H$ acts on both algebras and we get the bundle map
\[
P \times_H \Pseudop{\Cl}{0}(V_o) \to P \times_H \overline{\Symb}_0 (V_o)
\]
over $Z$. The induced map 
\begin{equation}\label{symbol-map}
\Gamma(Z;P \times_H \Pseudop{\Cl}{0}(V_o)) \to \Gamma(Z; P \times_H \overline{\Symb}_0 (V_o))
\end{equation}
is a surjective homomorphism of $C^*$-algebras (with the maximum norm). By Proposition \ref{fibretheorem}, the restriction 
\[
\Gamma(Z_0;P \times_H \Pseudop{\Cl}{0}(V_o))_{sa,odd}^{\times} \to \Gamma(Z_0; P \times_H \overline{\Symb}_0 (V_o))_{sa,odd}^{\times}
\]
to the subspaces of odd, self-adjoint invertible elements is a fibration. Pick a $\Cl^{p,q}$-linear metric even connection $\nabla$ on $V$.
The family $C_{(0,x,t)} := (1+\nabla^* \nabla)^{-1/4} B_{(x,t)}  (1+\nabla^* \nabla)^{-1/4}$ is a section of $\Gamma(Z;P \times_H \Pseudop{\Cl}{0}(V_o))_{sa,ev}$, with $\symb_{C_{(0,x,t)}} (\xi)= b_{(0,x,t)}(\xi)$ for all unit vectors $\xi$, while $s \mapsto b_{(s,x,t)}$ is a homotopy of sections of $\Gamma(Z; P \times_H \overline{\Symb}_0 (V_o))_{sa,ev}$. Over $Z_0$, $C_{(0,x,t)}$ is invertible. Thus we can lift the homotopy of symbols $b_{(s,x,t)}$ to a homotopy $C_{(s,y,t)}$ of invertible operators of order $0$ (for $(x,t) \in Z_0$), beginning with the invertible operator $C_{(0,x,t)}$. This lift can be extended to a lift of the symbols to a (non-invertible) family of operators $C_{(s,x,t)}$. The operator $C_{(s,x,t)}$ is not yet a pseudo-differential operator, but lies in the closure with respect to the $\norm{\_}_{0,0}$-norm. To overcome this problem, note that there exists a family $D_{(s,x,t)}$ of pseudo-differential operators with symbol $b_{(s,x,t)}$ (but without the invertibility condition). To 
construct such a family, one argues as follows. Because the symbol map $\pseudop{\Cl}{m}(V_o) \to \Symb_0 (V_o)$ is a surjective map of Fr\'echet spaces, it has a local section, by \cite[Theorem 10]{Pal2}. It then follows by an argument with a partition of unity that the map (\ref{symbol-map}), but without taking closures, is surjective, and this proves the existence of the family $D_{(s,x,t)}$. 
The family $E_{(s,x,t)}:=C_{(s,x,t)} - D_{(s,x,t)}$ lies in $\Gamma (Z; P \times_H \Pseudop{\Cl}{0}(V_o)_{sa,ev})$, and after adding $D_{(s,x,t)}$, it becomes invertible over $Z_0$ (which is an open condition). There are arbitrarily close approximations of $E_{(s,x,t)}$ by families of pseudo-differential operators $\tilde{E}_{(s,x,t)}$.
Choose the approximation close enough to guarantee that $\tilde{C}_{(s,x,t)}:=\tilde{E}_{(s,x,t)}+ D_{(s,x,t)}$ is invertible for $(x,t)\in Z$ and furthermore, one can pick $\tilde{E}_{(0,x,t)}=E_{(0,x,t)}$.
To get the desired operator $B_{(s,x,t)}$, put 
\[
B_{(s,x,t)} := (1+\nabla^* \nabla)^{1/4} \tilde{C}_{(s,x,t)} (1+\nabla^* \nabla)^{1/4}.\qedhere
\]
\end{proof}

\subsection{Proof of the main result - formal structures}\label{formal-structures}

From now on, we identify all groups $L^{p,q}_{M}$ using the isomorphism of Theorem \ref{isomorphism-theorem} and drop the subscript.

\begin{definition}
Let $\cP_{(p,q)} (X,Y)$ be the statement that the conclusion of Theorem holds for the finite CW pair $(X,Y)$ and the pair of numbers $(p,q)$, i.e. that the diagram
\[
\xymatrix{
L^{p,q} (X,Y) \ar[r]^-{\susp} \ar[dr]_{\Lambda} & F^{p+1,q} (X,Y) \ar[d]^{\bott}\\
 & \Omega F^{p,q} (X,Y)
}
\]
commutes and let $\cP_{(p,q)}$ be the statement that $\cP_{(p,q)} (X,Y)$ holds for all pairs $(X,Y)$.
\end{definition}

In the rest of this subsection, we mimic the formal structure found on the groups $F^{p,q}$ and reduce the statement $\cP_{(p,q)}$ to the case $(p,q)=(0,1)$ which is treated in the next section.

\begin{proposition}\label{module-structure-lgroup}
There is a natural bilinear map $L^{p,q} (X,Y) \times \KO^0 (Z,W) \to L^{p,q} ((X,Y) \times (Z,W))$. The induced product $L^{p,q} (X,Y) \times \KO^0 (X,Y) \to L^{p,q}(X,Y)$ turns $L^{p,q}(X,Y)$ into a right-$\KO^0 (X,Y)$-module. The maps $\susp$ and $\Lambda$ are $\KO^0$-linear.
\end{proposition}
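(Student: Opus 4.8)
The plan is to define the external product on cycles by the same tensor-product recipe that gives the $\KO^0$-module structure on the groups $F^{p,q}$ in Section~\ref{fredholmsec2}, and then to obtain $\KO^0$-linearity of $\susp$ and $\Lambda$ by comparing that recipe with the module structures on $F^{p+1,q}$ and on $\Omega F^{p,q}=F^{p,q}((\bI,\{\pm 1\})\times-)$.

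First I would fix representatives: a class in $\KO^0(Z,W)$ by a finite-rank $(0,0)$-cycle $(E,\eta,P)$ as in Definition/Proposition~\ref{def-k-null}, and a class in $L^{p,q}(X,Y)$ by a pair $(V,A)$ as in Definition~\ref{def-l-group}. Pull $V$ back to $(X\times Z)\times M$ along the projection to $X\times M$ and $E$ back to $X\times Z$ along the projection to $Z$, and form $V\otimes E$ with the $\Cl^{p,q}$-action $c\otimes 1$ and grading $\iota\otimes\eta$ (as in Section~\ref{fredholmsec2}) together with the family of pseudo-Dirac operators
\[
(A\cdot P)_{((x,z),t)}:=A_{(x,t)}\otimes 1+\iota\otimes P_z .
\]
Since $P_z$ is a bundle endomorphism it does not change the symbol, so this is again a family of pseudo-Dirac operators; it is constant for $|t|\ge 1$ and invertible there; over $Y\times Z$ it is $t$-independent because $A$ is; and because $A_{(x,t)}$ is odd one has $(A\cdot P)^2=A_{(x,t)}^2\otimes 1+1\otimes P_z^2$, so over $X\times W$, where $P|_W$ is invertible, $(A\cdot P)^2\ge\eps^2$ for a suitable continuous $\eps$. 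Hence $[V\otimes E,A\cdot P]$ is a class in $L^{p,q}((X,Y)\times(Z,W))$. Bilinearity, naturality under pullback, and the associativity and unit axioms turning $L^{p,q}(X,Y)$ into a right $\KO^0(X,Y)$-module (restrict the external product along the diagonal) then reduce, exactly as for the $F$-groups, to distributivity and associativity of tensor products over direct sums.

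Next, linearity of $\susp$. Choosing on $V\otimes E$ the tensor product of a connection on $V$ with a (pulled-back, hence fibrewise $t$-independent) connection on $E$, one reads off from Definition~\ref{defsuspension} that
\[
(D_{A\cdot P})_{(x,z)}=(D_A)_x\otimes 1+\iota'\otimes P_z ,
\]
where $\iota'=\twomatrix{\iota}{}{}{-\iota}$ is precisely the grading on $\Sigma E$; this is the verbatim $F^{p+1,q}$-module formula evaluated on $[\Sigma V,\dots,D_A]$ and $[E,\eta,P]$. It remains only to pass from the bounded transform of $(D_A)\otimes 1+\iota'\otimes P$ to $\normalize{D_A}\otimes 1+\iota'\otimes P$: since $(D_A)\otimes 1$ and $\iota'\otimes P$ anticommute, a brief functional-calculus computation shows the anticommutator of these two bounded $\Cl^{p+1,q}$-Fredholm families is nonnegative, so Lemma~\ref{nullhomotopy-criterion} identifies their $K$-theory classes — the connecting homotopy stays invertible over $X\times W\cup Y\times Z$ because both endpoints have squares bounded below there. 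The identical argument, carried out path-wise, shows that $\Lambda$ is $\KO^0$-linear.

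The step I expect to be the main obstacle is the relative bookkeeping over $X\times W$. There the tensor family $A\cdot P$ is invertible for every $t$ but is not literally $t$-independent, so a priori it is not quite of the form demanded by Definition~\ref{def-l-group} over the relative subspace, and one must check that it nonetheless represents a genuine \emph{relative} class. This works because the only use the maps $\susp$ and $\Lambda$ make of the relative subspace is invertibility of the suspension, respectively of each bounded transform $\normalize{(A\cdot P)_{(x,t)}}$, and both follow from $(A\cdot P)^2\ge\eps^2$ over $X\times W$ — the same estimate that keeps the homotopies of the previous paragraph within the invertibles. (Alternatively, for CW pairs one can harmlessly homotope $A\cdot P$ over $X\times W$ to a $t$-independent representative, using again that $P|_W$ is invertible.) Once this is dealt with, the rest is a formal consequence of Lemma~\ref{nullhomotopy-criterion} and of the structures already established on the $F$-groups.
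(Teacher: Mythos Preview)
Your proposal is correct and is precisely the ``straightforward adaptation'' the paper has in mind: it gives no proof of this proposition at all, merely stating that the argument is the same as for the $F^{p,q}$-groups and leaving it to the reader. Your tensor-product formula $A\otimes 1+\iota\otimes P$ is exactly the module recipe from Section~\ref{fredholmsec2}, your verification that $D_{A\cdot P}=D_A\otimes 1+\iota'\otimes P$ is the right computation, and your appeal to Lemma~\ref{nullhomotopy-criterion} for the passage between the two bounded transforms is the standard move (the anticommutator $2(D_A^2\otimes 1)QR+2(1\otimes P^2)Q$ is indeed nonnegative).

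You have in fact gone beyond the paper by flagging the one genuine wrinkle: over $X\times W$ the tensor family is invertible for every $t$ but not literally $t$-independent, so Definition~\ref{def-l-group} is not met on the nose. Your two fixes are both fine. The cleaner one is the first: since $D_A\otimes 1$ and $\iota'\otimes P$ anticommute, $(D_{A\cdot P})_{(x,w)}^2=(D_A)_x^2\otimes 1+1\otimes P_w^2\ge\epsilon^2$ whenever $P|_W$ is invertible, so $\susp$ lands in the relative group regardless; and the analogous pointwise estimate handles $\Lambda$. The paper does not discuss this point at all.
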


\begin{proposition}\label{morita-equivalence-lgroup}
There are natural Morita equivalence isomorphisms $\mor^{1,1}:L^{p,q} (X,Y) \cong L^{p+1,q+1}(X,Y)$ such that the diagrams
\[
\xymatrix{
L^{p,q}(X,Y) \ar[r]^-{\mor} \ar[d]^{\susp} & L^{p+1,q+1}(X,Y) \ar[d]^{\susp}  & & L^{p,q} (X,Y)\ar[r]^-{\mor} \ar[d]^{\Lambda} & L^{p+1,q+1} (X,Y)\ar[d]^{\Lambda}\\
 F^{p+1,q}(X,Y) \ar[r]^-{\mor} &  F^{p+2,q+1}(X,Y) & &  \Omega F^{p,q}(X,Y) \ar[r]^-{\mor} & \Omega F^{p+1,q+1}(X,Y)
}
\]
commute. There are analogous Morita equivalences $\mor^{0,8}$ and $\mor^{8,0}$.
\end{proposition}

The proofs are straightforward adaptions of those for the groups $F^{p,q}$ and are left to the reader. Since the horizontal maps in Proposition \ref{morita-equivalence-lgroup} are isomorphisms and since the Bott map on the $F$-groups is compatible with Morita equivalences, we immediately conclude:

\begin{corollary}\label{morita-mainthm}
The equivalences $\cP_{(p+1,q+1)}\Leftrightarrow \cP_{(p,q)}$, $\cP_{(p,q)}\Leftrightarrow \cP_{(p+8,q)}$ and $\cP_{(p,q)}\Leftrightarrow \cP_{(p,q+8)}$ hold.
\end{corollary}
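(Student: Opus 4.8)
The plan is to run a diagram chase, reading $\cP_{(p,q)}(X,Y)$ as the single equation $\bott\circ\susp=\Lambda$ between maps $L^{p,q}(X,Y)\to\Omega F^{p,q}(X,Y)$, and transporting it along the Morita equivalences of Proposition \ref{morita-equivalence-lgroup}. First I would collect the three ingredients already at hand for $\mor^{1,1}$: the two commuting squares of \eqref{compatibility:morita-equivalence}, namely $\mor^{1,1}\circ\susp=\susp\circ\mor^{1,1}$ and $\mor^{1,1}\circ\Lambda=\Lambda\circ\mor^{1,1}$ (with the indices shifted appropriately), together with the compatibility of the Bott map with the Morita equivalence on the $F$-groups, $\mor^{1,1}\circ\bott=\bott\circ\mor^{1,1}$, observed in the excerpt right after \eqref{morita-equivalences} (this makes sense in the case at hand, since every Bott map occurring has source $F^{p+1,q}$ with $p+1\geq 1$). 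Crucially, all instances of $\mor^{1,1}$ appearing here — on $L^{p,q}$, on $F^{p+1,q}$, and on $\Omega F^{p,q}$ — are \emph{isomorphisms}.

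Granting these, the equivalence $\cP_{(p+1,q+1)}(X,Y)\Leftrightarrow\cP_{(p,q)}(X,Y)$ is formal: the identity $\bott\circ\susp=\Lambda$ at level $(p+1,q+1)$ holds if and only if it holds after precomposition with the isomorphism $\mor^{1,1}\colon L^{p,q}(X,Y)\to L^{p+1,q+1}(X,Y)$; rewriting the two sides by the $\susp$-square and the $\Lambda$-square turns this into $\bott\circ\mor^{1,1}\circ\susp=\mor^{1,1}\circ\Lambda$ at level $(p,q)$; and a final application of the Bott--Morita compatibility followed by cancellation of the injective map $\mor^{1,1}$ on $\Omega F^{p,q}(X,Y)$ yields $\bott\circ\susp=\Lambda$ at level $(p,q)$. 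Since the pair $(X,Y)$ was an arbitrary finite CW pair, this gives $\cP_{(p+1,q+1)}\Leftrightarrow\cP_{(p,q)}$. The cases $\mor^{8,0}$ and $\mor^{0,8}$ go through verbatim, using the ``analogous Morita equivalences'' furnished by Proposition \ref{morita-equivalence-lgroup} and the fact, noted after \eqref{morita-equivalences}, that these are likewise compatible with the Bott maps in the relevant index range, so that no degenerate case arises.

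I do not expect any real obstacle: the substantive work is entirely in Proposition \ref{morita-equivalence-lgroup}, and the corollary is a pure diagram chase on top of it. The only points that deserve a moment's attention are (i) that one genuinely uses $\mor^{1,1}$ (and $\mor^{8,0}$, $\mor^{0,8}$) being \emph{isomorphisms}, not merely natural transformations, so that the implication runs in both directions, and (ii) that the Bott--Morita compatibility is invoked only where it is defined — automatic here, since every Bott map in sight has source $F^{p+1,q}$ with $p+1\geq 1$.
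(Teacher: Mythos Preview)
Your proposal is correct and follows exactly the paper's approach: the paper simply remarks that since the horizontal Morita maps in Proposition~\ref{morita-equivalence-lgroup} are isomorphisms and the Bott map on the $F$-groups is compatible with Morita equivalences, the corollary follows immediately. You have merely spelled out the diagram chase that the paper leaves implicit.
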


We now turn to Bott periodicity on the $L^{p,q}$-groups. One defines $\Omega L^{p,q}$ in an analogous fashion as in the case of the group $F^{p,q}$. 
\begin{definition}\label{def-bott}
Let $(V,A)$ be a pair for $L^{p,q}(X,Y)$, $p>0$. Consider the operator $J=e_1 \iota$, acting on $V$. We get a new pair on $ X \times \bI \times M$, by taking the product of the bundle $V$ with $\bI$ and by taking $\bott (A)_{(x,s,t)}=A_{(x,t)} + sJ$. For $s \neq 0$, this is invertible and if $A_{(x,t)}$ was invertible, then so is $A_{(x,t)} + sJ$ (the same calculation as for the Bott map on the $F$-groups).
This respects the equivalence relations in the group $L^{p,q}$ and thus defines a natural homomorphism $\bott: L^{p,q}(X,Y) \to \Omega L^{p-1,q}(X,Y)$.
\end{definition}

It is immediately verified that Bott periodicity commutes with Morita equivalences. The compatibility with the maps $\susp$ and $\Lambda$ takes some work.

\begin{proposition}\label{bott-map-L-groups}
For $p \geq  1$, the following two diagrams commute:
\begin{equation}\label{diagrams}
\xymatrix{
L^{p,q} (X,Y)\ar[r]^-{\bott} \ar[d]^{\Lambda} & \Omega L^{p-1,q}(X,Y) \ar[d]^{\Omega \Lambda} & & L^{p,q} (X,Y)\ar[r]^-{\bott} \ar[d]^{\bott \circ \susp} & \Omega L^{p-1,q}(X,Y) \ar[d]^{\Omega(\bott \circ \susp)}\\ 
\Omega F^{p,q}(X,Y) \ar[r]^-{\bott} & \Omega^2 F^{p-1,q} (X,Y)& & \Omega F^{p,q}(X,Y) \ar[r]^-{\bott} & \Omega^2 F^{p-1,q} (X,Y).
}
\end{equation}
\end{proposition}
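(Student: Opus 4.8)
Since all four maps in each square are additive and natural in $(X,Y)$, it suffices to check commutativity on a single representative cycle $[V,A]$ for $L^{p,q}(X,Y)$; write $\hat A_x$ for the operator on $\Gamma_c(\bR\times M;V)$ induced by $t\mapsto A_{(x,t)}$ and $\partial$ for $\nabla_{\partial_t}$. Both composites in each square land in $F^{p-1,q}$ over the double loop $(\bI,\{\pm 1\})\times(\bI,\{\pm 1\})\times(X,Y)$, and the plan is to write out the two resulting cycles explicitly and identify them by a controlled homotopy. Set $K:=e_1\iota$ (so $K^*=K$, $K^2=1$, $K$ commutes with $e_2,\dots,e_p,\eps_1,\dots,\eps_q$ and with $\partial$, and $KA_{(x,t)}=-A_{(x,t)}K$), and on $\Sigma V=V\oplus V$ put $J_1:=\twomatrix{}{-1}{-1}{}$ and $J_2:=\twomatrix{K}{}{}{-K}$. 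One checks once and for all: $J_1,J_2$ are self-adjoint, odd, square to $1$, anticommute with each other and with the suspended operator $(D_A)_x=\twomatrix{\hat A_x}{-\partial}{\partial}{-\hat A_x}$, and their product $J_1J_2=\twomatrix{}{K}{-K}{}$ is $\Cl^{p-1,q}$-linear, even and skew-adjoint.

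\textbf{The left square.} Going down then right, $\Lambda[V,A]$ is the family $(x,t)\mapsto\normalize{A_{(x,t)}}$, and the $F$-Bott map adds $sJ$ with $J=e_1\iota$ acting on $V$, so $\bott\circ\Lambda[V,A]$ is represented by $(s,t,x)\mapsto\normalize{A_{(x,t)}}+sJ$. Going right then down, $\bott[V,A]$ is the family $A_{(x,t)}+sJ$ of Definition \ref{def-bott}, so $\Omega\Lambda$ of it is $(s,t,x)\mapsto\normalize{A_{(x,t)}+sJ}$. After matching the two loop coordinates, what remains is to show that $F_0:=\normalize{A_{(x,t)}}+sJ$ and $F_1:=\normalize{A_{(x,t)}+sJ}$ define the same class in the relevant $F$-group. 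Here I would invoke Lemma \ref{nullhomotopy-criterion}: writing $f:=(1+A^2)^{-1/2}$, $g:=(1+A^2+s^2)^{-1/2}$ (both commuting with $A$ and $J$, the latter because they are functions of $A^2$), a short computation gives $\{F_0,F_1\}=2\,g\bigl(A^2 f+s^2\bigr)\ge 0$ pointwise, which is exactly the hypothesis of that lemma. The boundary conditions are immediate from $F_0^2=\normalize{A}^2+s^2$ and $F_1^2=(A^2+s^2)(1+A^2+s^2)^{-1}$ together with the invertibility of $A_{(x,\pm1)}$ and of $A_{(y,\cdot)}$ for $y\in Y$; the Fredholm-family property is supplied by Proposition \ref{fredholmproperty:familywise}.

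\textbf{The right square.} I would first record that applying $\susp$ (Definition \ref{defsuspension}) to a family of the shape $A_{(x,t)}+cK$ with $c$ constant produces $(D_A)_x+cJ_2$, and that an $F$-Bott map on a suspended cycle adds $sJ_1$ (this being the class of $c'(e_1)\iota'$). Unwinding $\bott\circ(\bott\circ\susp)$ on $[V,A]$ gives the cycle $(D_A)_x+s_1J_1+s_2J_2$ over the double loop; unwinding $\Omega(\bott\circ\susp)\circ\bott$ gives $(D_A)_x+s_2'J_2+s_1'J_1$, where the $J_2$-term comes from the $K$-summand produced by $\bott$ on $L$-groups and survives $\susp$, and the $J_1$-term from the subsequent $F$-Bott. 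After the reindexings of the Clifford generators both sides carry the same residual $\Cl^{p-1,q}$-structure on $\Sigma V$, and the two cycles differ only by the assignment of the two loop coordinates to $J_1$ and $J_2$. To absorb this I would use the conjugation homotopy $\theta\mapsto e^{\theta J_1J_2}(\,\cdot\,)e^{-\theta J_1J_2}$, $\theta\in[0,\pi/2]$: it consists of bona fide $(p-1,q)$-cycles (the conjugating unitary is $\Cl^{p-1,q}$-linear and even, so preserves the structure, and the conjugate of $(D_A)_x+uJ_1+vJ_2$ still squares to $(D_A)_x^2+u^2+v^2$), and it rotates $(J_1,J_2)$ into $(-J_2,J_1)$; combined with the fact (proved as in Proposition \ref{def-spectral-flow-map} by an explicit homotopy) that reversing a loop coordinate acts by $-1$, this identifies the two cycles and proves commutativity of the second square. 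The case of a general $G$-bundle is reduced to this tautological computation by the same ``local section'' argument as in the proof of Proposition \ref{fredholmproperty:familywise}.

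\textbf{The main obstacle.} The analytic content is light — Proposition \ref{fredholmproperty:familywise} makes all intermediate families Fredholm families, and the positivity in Lemma \ref{nullhomotopy-criterion} is checked fibrewise. The real work is the bookkeeping in the right square: one must keep precise track of how the Clifford generators are re-indexed under $\susp$ and under each Bott map, of which copy of $\bI$ ends up in which loop slot, and of the signs produced by the rotation homotopy and the loop-reversal relation, so that these combine to give exactly zero discrepancy rather than a stray sign. Once the normalisations (in particular the sign conventions in Definitions \ref{def-bott} and \ref{defsuspension}) are pinned down this is mechanical, but it is the only step requiring genuine care.
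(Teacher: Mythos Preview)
Your treatment of the left square is exactly the paper's argument: write the two composites as $F_0=\normalize{A_{(x,t)}}+sJ$ and $F_1=\normalize{A_{(x,t)}+sJ}$ and verify $\{F_0,F_1\}=2g(A^2f+s^2)\ge 0$ so that Lemma~\ref{nullhomotopy-criterion} applies. Nothing to add there.

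The right square, however, has a real gap. You write the two composites as $(D_A)_x+s_1J_1+s_2J_2$ and $(D_A)_x+s_2'J_2+s_1'J_1$ and then argue that they ``differ only by the assignment of the two loop coordinates to $J_1$ and $J_2$''. But $\susp$ produces the \emph{bounded} class $\normalize{(D_A)_x}$, and the $F$-Bott map adds $sJ$ to that bounded operator. Tracing the two routes carefully gives
\[
\normalize{(D_A)_x}\;+\;rJ_1\;+\;sJ_2
\qquad\text{versus}\qquad
\normalize{(D_A)_x+sJ_2}\;+\;rJ_1,
\]
because along the second route the $J_2$-summand is created \emph{before} suspension and hence sits \emph{inside} the bounded transform. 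So the two cycles do not differ merely by a relabelling of $(J_1,J_2)$, and your rotation $e^{\theta J_1J_2}$ does not bridge them: conjugation by this unitary sends $\normalize{D_A+sJ_2}+rJ_1$ to $\normalize{D_A+sJ_1}-rJ_2$, which is still not of the form $\normalize{D_A}+\alpha J_1+\beta J_2$.

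The paper's point is precisely that no new idea is needed here: the right square is handled by the \emph{same} anticommutator computation as the left. With $f=(1+D_A^2)^{-1/2}$ and $g=(1+D_A^2+s^2)^{-1/2}$, using that $J_1,J_2$ each anticommute with $D_A$ and with each other, one gets
\[
\bigl\{\,\normalize{D_A}+rJ_1+sJ_2,\ \normalize{D_A+sJ_2}+rJ_1\,\bigr\}
=2\bigl(D_A^2\,fg+s^2g+r^2\bigr)\ \ge\ 0,
\]
and Lemma~\ref{nullhomotopy-criterion} finishes the job exactly as before. This is what ``up to changing symbols'' means in the paper: replace $A$ by $D_A$ and $J$ by $J_2$, and observe that the extra additive term $rJ_1$ (common to both sides, anticommuting with everything in sight) only contributes a nonnegative $2r^2$ to the anticommutator. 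Your rotation/loop-swap machinery is not wrong in itself, but it is aimed at the wrong discrepancy.
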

\begin{proof}
We apply Lemma \ref{nullhomotopy-criterion} in both cases, and the argument for the case $(X,Y)=(*,\emptyset)$ and the general case are the same, up to change of notation.
Let $A(t)$ be a family of pseudo-Dirac operators representing an element in $L^{p,q}(*)$. The images under $(\Omega\Lambda) \circ \bott$ and $\bott \circ \Lambda$ are represented by the $2$-parameter families
\begin{equation}\label{operators1}
(s,t) \mapsto F (s,t):=\frac{A(t) + sJ}{(1 + (A(t) + sJ)^2 )^{1/2}} \; \text{and} \; (s,t) \mapsto G (s,t):=\normalize{A(t)} + sJ.
\end{equation}

According to Lemma \ref{nullhomotopy-criterion}, it is enough to prove that the anticommutator $\{F(s,t),G(s,t)\}$ is a positive operator. Since $A(t)$ and $J$ anticommute, $(A(t)+sJ)^2 = A(t)^2 + s^2$. Moreover, $J$ commutes with $P=(1+A(t)^2)^{-1/2}$ and $Q(s,t)=(1+(A(t)+sJ)^2)^{-1/2}$. The anticommutator is equal to 
\[
\{A(t)Q(s,t),\normalize{A(t)}\} + \{A(t) Q(s,t),sJ\} + \{sJ Q(s,t),\normalize{A(t)}\} + \{sJ Q(s,t),sJ\}.
\]
The second and third term vanish. The first is $2 A(t)^2 Q(s,t) P \geq 0$, and the fourth one is $2 s^2 Q(s,t)\geq 0$. Thus Lemma \ref{nullhomotopy-criterion} applies, and this proves the commutativity of the left diagram.

For the right diagram, the computation is very similar. Let
\[
D=\twomatrix{A(t)}{-\partial_t}{\partial_t}{-A(t)}, \; L:= \twomatrix{}{-1}{-1}{}, \;K:= \twomatrix{\iota e_1}{}{}{-\iota e_1}.
\]
The images under $\bott \circ \bott\circ \susp$ and $(\Omega (\bott \circ \susp)) \circ \bott$ are represented by the $2$-parameter families
\[
(s,u) \mapsto \normalize{D}+ sL + uK \; \text{and} \; (s,u) \mapsto \normalize{D+uK} + sL,
\]
whose anticommutator is 
\[
2 D^2 \frac{1}{(1+D^2)^{1/2}}  \frac{1}{(1+u^2 + D^2)^{1/2}} + 2 u^2 \frac{1}{(1+u^2 + D^2)^{1/2}} + 2s^2 \geq 0.\qedhere
\]
\end{proof}

\begin{corollary}
The implication $\cP_{(0,1)} \Rightarrow \cP_{(p,q)}$ for all $(p,q)$ holds. In particular, to prove Theorem \ref{main-index-theorem}, it suffices to consider the case $(p,q)=(0,1)$.
\end{corollary}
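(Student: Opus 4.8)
The plan is to deduce the implication formally from the two structural results just proved. My first observation is that Corollary \ref{morita-mainthm} makes the validity of $\cP_{(p,q)}$ depend only on the residue of $q-p$ modulo $8$: for $p,q,p',q'\geq 0$ with $q-p\equiv q'-p'\pmod 8$, the pairs $(p,q)$ and $(p',q')$ are linked by a chain of the Morita equivalences $\mor^{1,1}$, $\mor^{8,0}$, $\mor^{0,8}$ and their inverses that keeps all indices nonnegative, so $\cP_{(p,q)}\Leftrightarrow\cP_{(p',q')}$. Next I would establish the implication $\cP_{(p-1,q)}\Rightarrow\cP_{(p,q)}$ for $p\geq 1$. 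Granting this, I can cycle: for each residue $r$ pick $(p,q)$ with $p\geq 1$ and $q-p\equiv r\pmod 8$, so that $q-(p-1)\equiv r+1\pmod 8$; hence $\cP$ at residue $r+1$ implies $\cP$ at residue $r$. Starting from $\cP_{(0,1)}$ (residue $1$) and descending eight times runs through residues $0,7,6,5,4,3,2,1$, i.e.\ all residues modulo $8$, which gives $\cP_{(p,q)}$ for all $p,q\geq 0$.

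The heart of the matter is thus the implication $\cP_{(p-1,q)}\Rightarrow\cP_{(p,q)}$ for $p\geq 1$. To prove it I would fix a finite CW pair $(X,Y)$ and precompose the two commutative squares of Proposition \ref{bott-map-L-groups} with the Bott map $\bott\colon L^{p,q}(X,Y)\to\Omega L^{p-1,q}(X,Y)$ of Definition \ref{def-bott}. The left square then gives $\Omega\Lambda\circ\bott=\bott\circ\Lambda$ and the right one $\Omega(\bott\circ\susp)\circ\bott=\bott\circ(\bott\circ\susp)$, all as maps $L^{p,q}(X,Y)\to\Omega^2 F^{p-1,q}(X,Y)$. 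Now apply the hypothesis $\cP_{(p-1,q)}$ to the finite CW pair $(\bI,\partial\bI)\times(X,Y)$: since $\Omega L^{p-1,q}(X,Y)=L^{p-1,q}((\bI,\partial\bI)\times(X,Y))$ and likewise for the $F$-groups, this says precisely $\Omega\Lambda=\Omega(\bott\circ\susp)$ on $\Omega L^{p-1,q}(X,Y)$. Combining the three equalities yields $\bott\circ\Lambda=\bott\circ(\bott\circ\susp)$ on $L^{p,q}(X,Y)$, and since the Bott map on the $F$-groups is an isomorphism, in particular injective, on the paracompact pair $(\bI,\partial\bI)\times(X,Y)$, I may cancel the outer $\bott$ to get $\Lambda=\bott\circ\susp$, that is, $\cP_{(p,q)}(X,Y)$. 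As $(X,Y)$ is arbitrary, $\cP_{(p,q)}$ follows.

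There is no genuine obstacle here: the whole argument is formal, resting on Corollary \ref{morita-mainthm}, Proposition \ref{bott-map-L-groups} and Bott periodicity for the $F$-groups. The only points needing a little care are the bookkeeping that the descent in the residue of $q-p$ sweeps out all of $\bZ/8$, the observation that $\cP_{(p-1,q)}$ applied to the looped pair $(\bI,\partial\bI)\times(X,Y)$ is exactly the identity $\Omega\Lambda=\Omega(\bott\circ\susp)$ used above (a matter of unravelling the definitions of $\Omega L^{p,q}$, $\Omega F^{p,q}$, $\susp$ and $\Lambda$ on a product pair), and making sure each invocation of $\cP_{(p-1,q)}\Rightarrow\cP_{(p,q)}$ is made with the indices in the admissible range $p\geq 1$ — all routine.
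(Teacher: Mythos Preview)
Your argument is correct and uses exactly the same ingredients as the paper: Corollary \ref{morita-mainthm}, Proposition \ref{bott-map-L-groups}, and the fact that the Bott map on the $F$-groups is an isomorphism. The only difference is organizational: you first abstract to the residue $q-p\pmod 8$ and then cycle through all residues, while the paper walks through the $(p,q)$-plane more explicitly (from $(0,1)$ to $(p,p+1)$ by Morita, then increases $p$, then uses the $(8,0)$-shift and one final Morita step to reach $q=0$); the underlying logic is identical.
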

\begin{proof}
By the Bott periodicity theorem, the bottom maps in the diagrams (\ref{diagrams}) are isomorphisms. Therefore, the implication $\cP_{(p,q)} \Rightarrow \cP_{(p+1,q)}$ holds for all $p\geq 0$.
Assume that $\cP_{(0,1)}$ holds. By Corollary \ref{morita-mainthm}, $\cP_{(p,p+1)}$ follows for all $p \geq 0$. Combining both observations, $\cP_{(p,q)}$ follows for all $q \geq 1$, $p \geq q-1$. Using the implication $\cP_{(p+8,q)} \Rightarrow \cP_{(p,q)}$, $\cP_{(p,q)}$ follows for all $q \geq 1$. Finally $\cP_{(p+1,1)} \Rightarrow \cP_{(p,0)}$. 
\end{proof}

\subsection{A simple special case and conclusion of the argument}\label{conclusion}

What is left to complete the proof of Theorem \ref{main-index-theorem} is the statement $\cP_{(0,1)}$. 
This will be done in two steps. First, we prove $\cP_{(0,1)} (*)$ and then we use Theorem \ref{isomorphism-theorem} and $\KO$-linearity to conclude $\cP_{(0,1)}(X,Y)$ for all pairs $(X,Y)$. Consider the diagram

\begin{equation}\label{normalization-diagram}
\xymatrix{
L^{0,1}(*) \ar[dr]^{\cong}_{\Lambda} \ar[r]^{\susp} & F^{1,1}(*) \ar[r]^{\mor^{-1}}_{\cong}  \ar[d]^{\bott}_{\cong} & F^{0,0} (*)\ar[r]_-{\cong}^-{\ind} & \bZ\\
 & \Omega F^{0,1} (*).\\
}
\end{equation}

The isomorphism $\ind$ is defined as follows. Let $(H,\iota,F)$ be a $(0,0)$-cycle over $*$. We can write 
\[
\iota = \twomatrix{1}{}{}{-1}, \; F = \twomatrix{}{F^*_0}{F_0}{}
\]
where $F_0$ is a Fredholm operator (without further conditions) and we define $\ind [H,\iota,F]$ to be the usual Fredholm index of $F_0$.

\begin{proposition}\label{index-theorem-on-point}
The diagram (\ref{normalization-diagram}) commutes. Hence $\cP_{(0,1)} (*)$ holds.
\end{proposition}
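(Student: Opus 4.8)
\emph{Overall strategy and reduction to one generator.} The plan is to reduce the commutativity of (\ref{normalization-diagram}) to a single completely explicit one-dimensional index computation. First I would note that all four groups in the diagram are infinite cyclic: $F^{0,0}(*)\cong\bZ$ via $\ind$, hence $F^{1,1}(*)\cong\bZ$ via $\ind\circ\mor^{-1}$ (this uses only that $\mor^{1,1}$ is an isomorphism); $\Omega F^{0,1}(*)\cong\bZ$ because it represents $\KO^{0}(*)$; and $L^{0,1}(*)\cong\bZ$ by Theorem \ref{isomorphism-theorem}, since $\Lambda$ is an isomorphism. All maps in sight are group homomorphisms (additivity of $\susp$ and $\Lambda$ is part of Proposition \ref{def-spectral-flow-map}), and $\bott$, $\mor^{-1}$, $\ind$, $\Lambda$ are all isomorphisms. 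Consequently it is enough to exhibit a single element $[E,A]\in L^{0,1}(*)$, to compute the integer $\ind\bigl(\mor^{-1}\susp[E,A]\bigr)$, and to compare it with the spectral flow of the loop $\Lambda[E,A]$ (which is the standard identification $\Omega F^{0,1}(*)\cong\bZ$).

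\emph{An explicit generator.} Fix any closed Riemannian manifold $M$ of positive dimension (say $M=S^{1}$) and, as in the proof of Lemma \ref{existence-ample-bundle} but without the ampleness clause, a finite-dimensional $\Cl^{0,1}$-Dirac bundle $E\to M$ carrying an invertible pseudo-Dirac operator $B$. Choose a smooth path $t\mapsto A(t)$ of $\Cl^{0,1}$-linear pseudo-Dirac operators with $A(t)=B$ for $t\le-1$, with $A(t)$ constant for $t\ge 1$, arranged so that the entire family is simultaneously diagonalised by one fixed orthonormal basis $(\phi_{k})$ of $L^{2}(M;E)$, with eigenvalues $\lambda_{k}(t)$ that are constant outside $[-1,1]$, satisfy $\lambda_{k}(\pm1)\neq0$, and such that exactly one of them, say $\lambda_{k_{0}}$, changes sign transversally while every other $\lambda_{k}$ keeps a fixed sign. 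Such a family is elementary to write down. By construction the path $t\mapsto\normalize{A(t)}$ has spectral flow $\pm1$, so $\Lambda[E,A]$ is a generator of $\Omega F^{0,1}(*)$.

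\emph{The index computation and conclusion.} By Definition \ref{defsuspension} the suspension is $(D_{A})=\twomatrix{A}{-\partial_{t}}{\partial_{t}}{-A}$ on the cylinder $\bR\times M$; applying the Morita equivalence (\ref{morita}) turns this $\Cl^{1,1}$-cycle into a $\Cl^{0,0}$-cycle whose chiral component is a Cauchy--Riemann type operator of the form $-\partial_{t}+A(t)$. Since the family is simultaneously diagonalised in the $M$-variable, separation of variables identifies this operator with the Hilbert-space direct sum $\bigoplus_{k}\bigl(-\tfrac{d}{dt}+\lambda_{k}(t)\bigr)$ of scalar ordinary differential operators on $L^{2}(\bR)$. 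Each summand is Fredholm, because its limiting operators $-\tfrac{d}{dt}+\lambda_{k}(\pm\infty)$ are invertible (as $\lambda_{k}(\pm1)\neq0$), and the classical elementary computation — the functions $\exp\!\bigl(\int_{0}^{t}\lambda_{k}\bigr)$ and $\exp\!\bigl(-\int_{0}^{t}\lambda_{k}\bigr)$, spanning the homogeneous solutions of $-\tfrac{d}{dt}+\lambda_{k}$ and of its adjoint respectively, lying in $L^{2}(\bR)$ precisely when $\lambda_{k}(-\infty)>0>\lambda_{k}(+\infty)$, resp.\ $\lambda_{k}(-\infty)<0<\lambda_{k}(+\infty)$ — gives $\ind\bigl(-\tfrac{d}{dt}+c(t)\bigr)=0$ when $c(-\infty)$ and $c(+\infty)$ have the same sign, $=+1$ if $c$ passes from positive to negative values, and $=-1$ in the opposite case. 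Summing over $k$, only the mode $k_{0}$ contributes, so $\ind\bigl(\mor^{-1}\susp[E,A]\bigr)=\pm1$; in particular $\susp$ carries the generator $[E,A]$ to a generator of $F^{1,1}(*)$ and is therefore an isomorphism. Hence $\bott\circ\susp$ is an isomorphism which sends $[E,A]$ to a generator of $\Omega F^{0,1}(*)$, and comparing this with the spectral flow $\pm1$ of $\Lambda[E,A]$ yields $\bott\circ\susp=\Lambda$ on $L^{0,1}(*)$, that is, $\cP_{(0,1)}(*)$.

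\emph{The main obstacle.} The analysis is essentially trivial here: the only analytic input is the baby case $\ind\bigl(-\tfrac{d}{dt}+c(t)\bigr)=\pm1$ of the Atiyah--Patodi--Singer index theorem (this is precisely where one could, alternatively, quote Robbin--Salamon). The real work is the bookkeeping of signs and Clifford/grading conventions needed to promote ``$\bott\circ\susp=\pm\Lambda$'' to ``$\bott\circ\susp=\Lambda$''. Concretely, one must verify that the direction of the crossing eigenvalue $\lambda_{k_0}$, the sign conventions in the suspension formula of Definition \ref{defsuspension}, the chosen Morita isomorphism $\mor^{1,1}$ of (\ref{morita}), the normalisation of $\bott$, and the identification of $\Omega F^{0,1}(*)$ with $\bZ$ by spectral flow are all mutually consistent. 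This is routine but unavoidable, and it is the only step where the argument could plausibly come out off by a sign.
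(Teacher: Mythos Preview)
Your overall strategy matches the paper's: reduce to $\bZ\to\bZ$, exhibit one explicit element whose eigenvalue family has a single simple crossing, and compute the index of the suspended operator by separating variables and solving the scalar ODE $(-\tfrac{d}{dt}+\lambda_{k}(t))u=0$. The construction of the generator and the ODE computation are essentially identical to what the paper does.

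However, you have correctly identified, and then not closed, the real gap: your argument only establishes $\bott\circ\susp=\pm\Lambda$. You compare $\Lambda[E,A]$ and $\bott\circ\susp[E,A]$ to \emph{two different} integer invariants---a ``spectral flow'' map $\Omega F^{0,1}(*)\to\bZ$ on the one hand, and $\ind\circ\mor^{-1}$ on $F^{1,1}(*)$ on the other---and observe that each yields $\pm1$. That is not enough: you would still need to check that your spectral flow isomorphism agrees with $\ind\circ\mor^{-1}\circ\bott^{-1}$, which is exactly the pile of Clifford/grading conventions you flag as ``routine but unavoidable''. Calling it routine does not discharge it, and in practice these sign checks are where such arguments actually go wrong.

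The paper avoids this trap entirely by never introducing an auxiliary spectral flow isomorphism. Instead it computes $\Lambda(u)$ \emph{directly as an element of} $\Omega F^{0,1}(*)$: the family $\normalize{A(t)}$ splits as an acyclic piece on $\ker(A(0))^{\perp}$ plus a two-dimensional piece on $\ker(A(0))$, and the latter is recognised, by unwinding the formulae (\ref{morita}) and (\ref{bott-map}), as the explicit representative of $\bott\circ\mor\circ\ind^{-1}(1)$. Separately, the ODE calculation gives $\ind\circ\mor^{-1}\circ\susp(u)=1$, i.e.\ $\susp(u)=\mor\circ\ind^{-1}(1)$, hence $\bott\circ\susp(u)=\bott\circ\mor\circ\ind^{-1}(1)$. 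Both sides are now the \emph{same named element} of $\Omega F^{0,1}(*)$, and no sign comparison between two independent $\bZ$-valued invariants is needed. If you want to repair your argument, this is the step to add: compute $\Lambda[E,A]$ explicitly in $\Omega F^{0,1}(*)$ rather than only its image under an unspecified spectral flow map.
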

\begin{proof}
The maps $\ind$, $\mor$ and $\bott$ are isomorphisms. By Theorem \ref{isomorphism-theorem}, the map $\Lambda$ is also an isomorphism, so that all groups in (\ref{normalization-diagram}) are isomorphic to $\bZ$. Therefore, it is enough to construct an element $u \in L^{0,1}(*)$ such that $\Lambda (u) = \bott (\susp (u))$ and $\Lambda (u) \neq 0$.

Let $V_0 \to M$ be a real vector bundle (without grading or Clifford structure) and $B$ be a pseudo-Dirac operator on $V_0$. Assume that the kernel of $B$ is $1$-dimensional; such an operator is easily constructed by adding a suitable finite rank operator to an arbitrary pseudo-Dirac operator. Let $p$ be the projection onto $\ker (B)$ and let $a: \bR \to \bR$ be a smooth function with $a(t)=-1$ for $t \leq -1$ and $a(t) =-1$ for $t \geq 1$. 
Then $V= V_0 \oplus V_0$ has the $\Cl^{0,1}$-structure
\[
\iota=\twomatrix{1}{}{}{-1}, \; \eps=\twomatrix{}{1}{1}{} \; \text{ and the curve } \; A(t)= \twomatrix{}{B+a(t)p}{B+a(t)p}{}
\]
represents an element $u \in L^{0,1}(*)$. We now show

\begin{enumerate}
\item $\Lambda (u) = \bott \circ \mor \circ \ind^{-1} (1)$,
\item $\ind \circ \mor^{-1} \circ \susp (u) =1$,
\end{enumerate}
and this finishes the proof. Let us first compute $\bott \circ \mor \circ \ind^{-1} (1) \in F^{1,1}(*)$. The class $\ind^{-1}(1) \in F^{0,0}$ is represented by the one-dimensional space $\bR$, with grading operator $+1$ and $F=0$. Under Morita equivalence, this becomes the element defined by
\[
 \iota:=\twomatrix{1}{}{}{-1}, \; e:= \twomatrix{}{-1}{1}{}, \; \eps:=\twomatrix{}{1}{1}{}, \; F=0
\]
in $F^{1,1}(*)$. The Bott path on this element is 
\[
t  \mapsto t  e\iota = \twomatrix{}{t}{t}{} \sim \twomatrix{}{a(t)}{a(t)}{}.
\]

Let $H= L^2 (M;V)$ and let $K \subset H$ be the kernel of $A(0)$ with orthogonal complement $K^{\bot}$. Then $\Lambda (u)$ is represented by $(K, \normalize{A(t)}|_K) \oplus (K^{\bot}, \normalize{A(t)}|_{K^{\bot}})$. The second summand is acyclic, and the first summand is a representative for $\bott \circ \mor \circ \ind^{-1} (1)$, by the above computation. This completes the computation of $\Lambda(u)$. 

The computation of $\mor^{-1} (\susp (u)) \in F^{0,0}(*)$ is straightforward, using the formulas for the Morita equivalence and the suspension. We have to restrict the operator $\twomatrix{A(t)}{-\partial_t}{\partial_t}{-A(t)}$ to the $+1$-eigenspace of $\twomatrix{\eps}{}{}{\eps}\twomatrix{}{\iota}{-\iota}{}$. This eigenspace is isomorphic to $L^2 (\bR \times M; V)$; the grading on the suspension and the operator restrict to

\begin{equation}\label{exampleoperator}
\iota=\twomatrix{1}{}{}{-1} \;\text{and} \; C:=\twomatrix{}{-\partial_t+B+a(t)p}{\partial_t+B+a(t)p}{}.
\end{equation}
To verify that $\ind \circ \mor^{-1} \circ \susp (u) =1$, we have to show that
\[
 \ind (\partial_t+B+a(t)p) = \dim (\ker (\partial_t+B+a(t)p))- \dim (\ker(-\partial_t+B+a(t)p)) =1.
\]
To compute these kernels, observe that for $u: \bR \to \Gamma (M;V)$, we have
\[
\norm{Cu}^2 = \norm{\twomatrix{}{B}{B}{} u}^2 + \norm{ \twomatrix{}{-\partial + ap}{ \partial+ ap}{} u}^2
\]
and so the kernel is precisely the space of $L^2 $-functions $u: \bR \to \ker \twomatrix{}{B}{B}{}$ that satisfy the ODE
\[
\twomatrix{}{-\partial_t+a(t)}{\partial_t +a(t)}{} u (t)=0.
\]
But it is easy to see that the $L^2$-kernel of $\partial_t+a(t)$ is $1$-dimensional, and the $L^2$-kernel of $-\partial_t +a(t)$ is zero.
\end{proof}
The index of the operator (\ref{exampleoperator}) can alternatively be computed using the spectral-flow index theorem as proven by Robbin and Salamon \cite[Theorem 4.21]{RoSa}.

\begin{corollary}\label{normalform-l-01}
Theorem \ref{main-index-theorem} holds for $(p,q)=(0,1)$.
\end{corollary}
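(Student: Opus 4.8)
The plan is to bootstrap from the pointwise statement $\cP_{(0,1)}(*)$, which is Proposition \ref{index-theorem-on-point}, to an arbitrary finite CW pair $(X,Y)$ by exploiting the $\KO^0$-module structure of Proposition \ref{module-structure-lgroup} together with the isomorphism of Theorem \ref{isomorphism-theorem}.

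First I would fix a generator $u_0$ of the infinite cyclic group $L^{0,1}(*)$ (that this group is infinite cyclic is recorded in the proof of Proposition \ref{index-theorem-on-point}). For a finite CW pair $(X,Y)$, the external product of Proposition \ref{module-structure-lgroup}, evaluated on $(*)\times(X,Y)=(X,Y)$, yields a homomorphism
\[
\Phi\colon\KO^0(X,Y)\longrightarrow L^{0,1}(X,Y),\qquad\alpha\longmapsto u_0\cdot\alpha.
\]
Since $\Lambda$ is $\KO^0$-linear, $\Lambda\circ\Phi$ is the map $\alpha\mapsto\Lambda(u_0)\cdot\alpha$. Under the comparison isomorphisms $\Omega F^{0,1}(*)\cong\KO^0(*)=\bZ$ and $\Omega F^{0,1}(X,Y)\cong\KO^0(X,Y)$ --- which are compatible with the $\KO^0$-module structures, as these were built to be linear over $\KO^0$ with respect to Morita equivalences and Bott periodicity and to restrict to the ordinary cross product in bidegree $(0,0)$ --- this last map is multiplication by the generator $\Lambda(u_0)=\pm1$ of $\KO^0(*)$, hence an isomorphism. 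Therefore $\Phi$ is bijective, in particular surjective: every class in $L^{0,1}(X,Y)$ has the form $u_0\cdot\alpha$ for some $\alpha\in\KO^0(X,Y)$. This is the step where Theorem \ref{isomorphism-theorem} enters, namely to identify $L^{0,1}$ with $\Omega F^{0,1}$.

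With this in hand I would conclude. Both $\Lambda$ and $\bott\circ\susp$ are $\KO^0$-linear --- for $\susp$ and $\Lambda$ by Proposition \ref{module-structure-lgroup}, and for the Bott map on the $F$-groups by the compatibility of the module structure with Bott periodicity recorded in Section \ref{fredholmsec2}. Hence, for $v=u_0\cdot\alpha$,
\[
(\bott\circ\susp)(v)=(\bott\circ\susp)(u_0)\cdot\alpha=\Lambda(u_0)\cdot\alpha=\Lambda(v),
\]
where the middle equality is exactly $\cP_{(0,1)}(*)$ from Proposition \ref{index-theorem-on-point}. Since $v$ was arbitrary, $\bott\circ\susp=\Lambda$ on $L^{0,1}(X,Y)$, i.e. $\cP_{(0,1)}(X,Y)$ holds, which is the assertion of the corollary.

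I do not expect a genuine obstacle at this stage: the analytic and computational substance has already been discharged in Theorem \ref{isomorphism-theorem} and Proposition \ref{index-theorem-on-point}, and what remains is a short diagram chase. The one point that deserves a moment's care is checking that $\Lambda$ intertwines the module structures on $L^{0,1}$ and $\Omega F^{0,1}$ with the standard cross-product $\KO^0$-module structure, so that surjectivity of $\alpha\mapsto u_0\cdot\alpha$ genuinely follows; but this is immediate from the compatibilities already in place.
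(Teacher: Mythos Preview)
Your argument is correct and is essentially the paper's own proof: both write an arbitrary class in $L^{0,1}(X,Y)$ as $u\cdot\alpha$ for a generator $u\in L^{0,1}(*)$ and some $\alpha\in\KO^0(X,Y)$ (the paper obtains $\alpha$ directly as the image of $x$ under $\Lambda$ followed by $\bott^{-1}$ and Morita equivalence, while you phrase this as surjectivity of $\alpha\mapsto u_0\cdot\alpha$), and then conclude by $\KO^0$-linearity of $\Lambda$, $\susp$ and $\bott$ together with Proposition \ref{index-theorem-on-point}.
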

\begin{proof}
Let $x \in L^{0,1} (X,Y)$ and let $u \in L^{0,1}(*)$ be the element constructed in the proof of 
Proposition \ref{index-theorem-on-point}.
Let $a$ be the image of $x$ under the composition 
\[
L^{0,1} (X,Y) \stackrel{\Lambda}{\to} \Omega F^{0,1} (X,Y) \stackrel{\bott^{-1}}{\to } F^{1,1} (X,Y) \cong \KO^0 (X,Y)
\]
of $\KO^0$-linear isomorphisms. Since the element $u \in L^{0,1}(*)$ maps to $1$ under these isomorphisms for $(X,Y)=*$ by the proof of Proposition \ref{index-theorem-on-point}, it follows that $x = u \cdot a$. Then
\[
\bott (\susp (x)) = \bott (\susp (u \cdot a)) =   \bott (\susp ( u)) \cdot a = \Lambda (u) a  = \Lambda (u \cdot a) = \Lambda (x);
\]
the first and last equations are clear, the third is Proposition \ref{index-theorem-on-point} and the two others follow from $\KO$-linearity.
\end{proof}

\appendix

\section{Comparison of the models for \texorpdfstring{$K$}--Theory}\label{appendix}

\subsection{Comparison for compact pairs}

In this subsection, we prove Theorem \ref{compact-comparison-theorem}. Recall the statement. For each space pair $(X,Y)$, there is a map
\[
\alpha:[(X,Y);(\Fred^{p,q} (H),\cG^{p,q} (H))] \to F^{p,q} (X,Y).
\]
We have to show that $\alpha$ is bijective whenever $(X,Y)$ is a compact pair.
The proof can be summarized in the commutative diagram

\[
\xymatrix{
 [(X,Y) ;(\Fred^{p,q} (H),\cG^{p,q} (H))]\ar[r]^-{\alpha}  & F^{p,q}(X,Y) & \\
[(X,Y) ;(\Fred^{p,q}_{0} (H),\cG^{p,q}_{0} (H))] \ar[r]^-{\alpha_{0}} \ar[u] & F^{p,q}_{0}(X,Y) \ar[u] \ar[r]^-{\gamma} & \KK (\Cl^{q,p} ;\gR_0(X-Y)).
}
\]

The groups in this diagram have the following meaning:

\begin{enumerate}
\item The space $\Fred^{p,q}_{0} (H) \subset \Fred^{p,q} (H)$ is the subspace of operators $F$ such that $F^2 -1$ is compact. The space $\cG^{p,q}_{0} (H)) \subset \cG^{p,q} (H))$ is the space of involutions $F$. The left upwards map is induced by forgetting the stronger conditions and is an isomorphism by a spectral-deformation argument as in \cite{AS69}.
\item Similarly, the group $F^{p,q}_{0} (X,Y)$ is defined in the same way as $F^{p,q}(X,Y)$, using Fredholm families, except that we impose the stronger conditions that $F^2-1$ is compact and $F_y^2-1 =0$ for $y \in Y$. Lemma \ref{lemmaa} below proves that the middle upwards map is an isomorphism.
\item $\KK (\Cl^{q,p} ;\gR_0(X-Y))$ is Kasparov's real $\KK$-group (note the switch in the Clifford degrees). By $\gR_0(X-Y) $, we denote the real $\cstar$-algebra of real valued continuous functions on $X$ that vanish on $Y$.
We will define the map $\gamma$ below. The composition $\gamma \circ \alpha_{0}$ is an isomorphism by a classical result of Kasparov \cite[\S 6]{Kasp}. Thus the proof of Theorem \ref{compact-comparison-theorem} will be complete once we show that the map $\gamma$ is injective.
\end{enumerate}

\begin{lemma}\label{lemmaa}
The map $F^{p,q}_{0} (X,Y) \to F^{p,q} (X,Y)$ is an isomorphism.
\end{lemma}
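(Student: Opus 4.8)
The plan is to construct an explicit two-sided inverse $\Psi$ to the forgetful map $j\colon F^{p,q}_{0}(X,Y)\to F^{p,q}(X,Y)$ by a ``spectral straightening''. Given a $(p,q)$-cycle $(V,c,\iota,F)$ over the compact pair $(X,Y)$, compactness of $X$ together with Lemma~\ref{fredholm-via-positivity}, and compactness of $Y$ together with the local boundedness of $y\mapsto\norm{(F_y)^{-1}}$ (Banach--Steinhaus), provide a single constant $0<\epsilon<1$ with $F^{2}\geq\epsilon^{2}\bmod\Kom_X(V)$ and $(F_y)^{2}\geq\epsilon^{2}$ for all $y\in Y$. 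Let $\psi_{\epsilon}\colon\bR\to[-1,1]$ be the odd continuous function with $\psi_{\epsilon}(t)=t/\epsilon$ on $[-\epsilon,\epsilon]$ and $\psi_{\epsilon}(t)=1$ on $[\epsilon,\infty)$, and set $F':=\psi_{\epsilon}(F)$ via the functional calculus for Hilbert bundles \cite{DD}. Since $\psi_\epsilon$ is odd, $F'$ is again self-adjoint, odd and $\Cl^{p,q}$-linear. The function $\psi_{\epsilon}^{2}-1$ is supported in $[-\epsilon,\epsilon]$ and vanishes at $\pm\epsilon$; since the essential spectrum of each $F_x$ avoids $(-\epsilon,\epsilon)$, the operator $(F')^{2}-1=(\psi_{\epsilon}^{2}-1)(F)$ is fibrewise a norm-limit of finite-rank operators, and the uniformity of the gap over the compact base upgrades this to $(F')^{2}-1\in\Kom_X(V)$; on $Y$, $F'|_{Y}=\sign(F|_{Y})$ is an involution. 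Thus $(V,c,\iota,F')$ represents a class in $F^{p,q}_{0}(X,Y)$, and we set $\Psi[V,c,\iota,F]:=[V,c,\iota,F']$.

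Next I would check that $\Psi$ is well defined and that $j\circ\Psi=\id$. For well-definedness one applies the same functional-calculus recipe to a concordance (a cycle over the compact base $X\times[0,1]$): naturality of the functional calculus identifies its restrictions at the two ends with the straightenings of the endpoints, and two choices of $\epsilon$ are compared using the homotopy of the next paragraph, so $[V,c,\iota,\psi_{\epsilon}(F)]\in F^{p,q}_{0}(X,Y)$ depends only on $[V,c,\iota,F]\in F^{p,q}(X,Y)$. That $j\circ\Psi=\id$ is immediate from Lemma~\ref{nullhomotopy-criterion}: the anticommutator $[F,F']=2F\psi_{\epsilon}(F)=2\,(t\psi_{\epsilon}(t))(F)$ is nonnegative because $t\psi_{\epsilon}(t)\geq0$ for all $t$, so $[V,c,\iota,F]=[V,c,\iota,F']$ in $F^{p,q}(X,Y)$.

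Finally, for $\Psi\circ j=\id$ one must show that a cycle $(V,c,\iota,F)$ \emph{already} in the strong class satisfies $[V,c,\iota,F]=[V,c,\iota,\psi_{\epsilon}(F)]$ in $F^{p,q}_{0}(X,Y)$, i.e.\ that the two are concordant \emph{through strong cycles}. I would build such a concordance as $u\mapsto\chi_{u}(F)$, $u\in[0,1]$, where $(\chi_{u})$ is a path of odd continuous functions with $\chi_{0}=\id$ on $[-\norm{F},\norm{F}]\supseteq\sigma(F)$, $\chi_{1}=\psi_{\epsilon}$, and $\chi_{u}(\pm1)=\pm1$ for every $u$. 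The last condition is exactly what makes it work: for a strong cycle the essential spectrum of each $F_x$ is $\{-1,1\}$, so $\chi_{u}^{2}-1$ vanishes there and $(\chi_{u}(F))^{2}-1=(\chi_{u}^{2}-1)(F)\in\Kom_X(V)$; moreover $\chi_{u}(F)|_{Y}=F|_{Y}$ stays an involution; and $(\chi_{u}(F))^{2}\geq\tfrac14\bmod\Kom_X(V)$, so the family is Fredholm over $X\times[0,1]$ by Lemma~\ref{fredholm-via-positivity}. Together with the direct-sum relation and the additive-inverse formula of Lemma~\ref{additiveinverse} (whose nullhomotopy $P_{s}=\cos(s)(F\oplus F)+\sin(s)Q$ has $P_{s}^{2}-1\in\Kom$ and $P_{s}|_{Y}$ an involution, hence lives in $F^{p,q}_{0}$), this shows $\Psi$ is a two-sided inverse of $j$, so $j$ is an isomorphism. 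The main obstacle is the functional-calculus bookkeeping: verifying that $(\psi_{\epsilon}^{2}-1)(F)$ and $(\chi_{u}^{2}-1)(F)$ genuinely lie in $\Kom_X(V)$ rather than being merely fibrewise compact, which is where compactness of the base (a uniform spectral gap) and the Dixmier--Douady description of $\Kom_X(V)$ are used; the remaining checks that the interpolating families are cycles reduce to Lemmas~\ref{invertibility-via-positivity} and~\ref{fredholm-via-positivity}.
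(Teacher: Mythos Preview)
Your approach is essentially the same as the paper's: both apply the functional calculus with an odd function that equals $\pm 1$ outside the essential spectral gap $(-\epsilon,\epsilon)$, and both use a convex-type homotopy (the paper's straight-line interpolation, your anticommutator criterion via Lemma~\ref{nullhomotopy-criterion}) to identify $[F]$ with $[\psi_\epsilon(F)]$ in $F^{p,q}(X,Y)$. The paper phrases it as surjectivity plus injectivity (the latter by applying the same trick to a concordance), while you package the same moves as an explicit two-sided inverse $\Psi$; this is an organisational difference, not a mathematical one. Your write-up is more scrupulous than the paper's about the one genuinely delicate point --- that $(\psi_\epsilon^2-1)(F)$ lies in $\Kom_X(V)$ and not merely fibrewise in $\Kom(V_x)$ --- which the paper simply asserts.
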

\begin{proof}
First we prove surjectivity. Let $(H,F)$ be a cycle for $F^{p,q} (X,Y)$. There exists $\epsilon >0$ such that $F_{y}^{2} \geq \epsilon^2$ for all $y \in Y$ and $F^{2} \geq \epsilon^2 \bmod \Kom_X (H)$, by compactness of $X$. Let $f : \bR \to \bR$ be an odd function such that $f(t)=1$ for $t \geq \epsilon$. Using functional calculus, we define $f(F) \in \Lin_X (H)$. The pair $(H, f(F))$ is a cycle for $F^{p,q}_{0} (X,Y)$. The straight-line homotopy between $(H,f(F))$ and $(H,F)$ proves that $[H,F] = [H, f(F)] \in F^{p,q} (X,Y)$, which proves surjectivity. Injectivity is proven by the same argument, applied to a concordance.
\end{proof}

To proceed, the reader should recall the definition of Kasparov's $\KK$-Theory \cite{Bla,Kasp}. In particular, familiarity with the basic notions of the theory of Hilbert-$\cstar$-modules will be assumed \cite[\S 15]{WO}. Here is the definition of the map $\gamma$.
Let $(V,\iota,c,F)$ be a $(p,q)$-cycle on $(X,Y)$ and assume that $F^2 -1$ is compact and $F_y^2 -1 =0$ for $y \in Y$. The space $\Gamma_c (X-Y;V)$ of sections vanishing at infinity is a graded Hilbert $\gR_0 (X-Y)$-module. The operator $F$ is an odd self-adjoint operator on $\Gamma_c (X-Y; V)$, and we claim that $F^2-1: \Gamma_c (X-Y;V)\to \Gamma_c (X-Y;V)$ is compact in the Hilbert module sense. This follows from general principles: given $\eps>0$, we pick $G \in \Fin_X (V)$ with $\norm{G_x-(F^2_x-1)} \leq \eps/2$ and a function $a \in C_c (X-Y, \bR)$ with $\norm{aG_x-G_x} \leq \eps/2$, for all $x \in X$. Then $aG: \Gamma_c (X-Y;V)\to \Gamma_c (X-Y;V)$ is of finite rank, and it follows that $F^2-1$ is compact.

Using the $\Cl^{p,q}$-structure, we define a graded $*$-homomorphism $\rho:\Cl^{q,p} \to \Lin_{\gR(X)} (\Gamma_c (X-Y;V))$ (sic) into the $\cstar$-algebra of adjointable operators of the Hilbert-$*$-module by $\rho(e_i):=c(\eps_i \iota )$ and $\rho(\eps_j):= c(e_j \iota)$. Since we assumed that $F$ is $\Cl^{p,q}$-linear and odd, we find that for all $z \in \Cl^{q,p}$, the graded commutator $[\rho(z),F]=0$ is trivial. Thus $(\Gamma_c (X-Y; V), \rho,F)$ is a Kasparov $\Cl^{q,p}-\gR_0 (X-Y)$-module and defines an element in $\KK (\Cl^{q,p};\gR_0 (X-Y))$. We define

\[
\gamma (V,\iota,c,F) := [\Gamma_c (X-Y; V), \rho,F] \in \KK (\Cl^{q,p};\gR_0 (X-Y)).
\]

If $F^2-1=0$ on all of $X$, then $(\Gamma_c (X-Y; H), \rho,F)$ is degenerate and hence the $\KK$-class is zero, so that the above construction is compatible with the equivalence relations defining $F^{p,q}_{0} (X,Y)$ and $\KK(\Cl^{q,p};\gR_0 (X-Y))$ and so $\gamma$ is well-defined.
It was proven by Kasparov \cite[\S 6]{Kasp} that the composition $\gamma \circ \alpha_{0}
$ is an isomorphism.

\begin{lemma}\label{lemmab}
The map $\gamma:F^{p,q}_{0} (X,Y) \to \KK(\Cl^{q,p} ;\gR_0(X-Y))$ is injective.
\end{lemma}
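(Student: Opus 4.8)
The plan is to derive injectivity of $\gamma$ from the already-established fact that $\gamma \circ \alpha_{0}$ is an isomorphism (Kasparov). Since $\gamma \circ \alpha_{0}$ is in particular surjective, $\gamma$ is surjective, so once we know $\gamma$ is injective it is an isomorphism; then $\alpha_{0} = \gamma^{-1}\circ(\gamma\circ\alpha_{0})$ is an isomorphism, and the commutative diagram \ref{comparison-theorem-diagram}, together with Lemma \ref{lemmaa} and the bijectivity of the left vertical map, shows that $\alpha$ is a bijection, completing the proof of Theorem \ref{compact-comparison-theorem}. Thus it suffices to show: if $\gamma(V,\iota,c,F)=0$, then $[V,\iota,c,F]=0$ in $F^{p,q}_{0}(X,Y)$.

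First I would reduce to trivial Hilbert bundles. Adding to $(V,\iota,c,F)$ an acyclic cycle $(W,\iota_{W},c_{W},F_{W})$ with $W=H_{0}\times X$ trivial, $H_{0}$ an ample $\Cl^{p,q}$-Hilbert space and $F_{W}$ a fixed $\Cl^{p,q}$-linear odd involution (such an $F_{W}$ exists by ampleness, cf.\ the proof of Lemma \ref{kuipertheorem}) changes neither the class in $F^{p,q}_{0}(X,Y)$ nor the hypothesis $\gamma(\cdot)=0$, the latter because it adjoins a degenerate Kasparov module. After such an addition $V$ becomes ample, and an ample $\Cl^{p,q}$-Hilbert bundle over the compact space $X$ is trivial: its structure group is the group $U_{p,q}$ of even $\Cl^{p,q}$-linear isometries of an ample $\Cl^{p,q}$-Hilbert space, which is contractible by the proof of Lemma \ref{kuipertheoremb} (alternatively, invoke \cite{DD}). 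Hence we may assume $V=H_{X}$ for a fixed ample $\Cl^{p,q}$-Hilbert space $H$ with constant structure $(\iota,c)$. Finally, since $Y$ is compact and $F|_{Y}$ is invertible, a functional-calculus argument as in \cite{AS69} — replacing $F$ by $g(F)$ for a suitable odd continuous $g$ and noting that $g(F)^{2}-1$ is a continuous function of $F^{2}-1$ vanishing at $0$, hence lies in $\Kom_{X}(H_{X})$ — lets us arrange in addition that $F|_{Y}$ is an involution, with $F^{2}-1$ still in $\Kom_{X}(H_{X})$.

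Next I would translate the vanishing of the $\KK$-class into a concordance. By the standard description of the $\KK$-relations, $\gamma([H_{X},\iota,c,F])=0$ means that, after adjoining a degenerate module, $(\Gamma_{c}(X-Y;H_{X}),\rho,F)$ is operator homotopic to a degenerate module; using Kasparov's stabilization theorem one may take the adjoined degenerate module to be $\Gamma_{c}(X-Y;W)$ for a trivial ample Clifford bundle $W$ with $F_{W}$ a fixed involution — that is, another acyclic cycle — and after absorbing it we face an operator homotopy of Kasparov modules on the \emph{fixed} Hilbert module $\Gamma_{c}(X-Y;H_{X})$ over $\gR_{0}(X-Y)$, compatible with the degenerate structure over $Y$. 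The decisive point is that over the compact space $X$ the adjointable operators of $\Gamma_{c}(X-Y;H_{X})$ are exactly the strong-operator continuous families $x\mapsto G_{x}$, while the compact operators are exactly the norm-continuous families $X\to\Kom(H)$ (the characterisation of compact operators of a trivial Hilbert bundle in Section \ref{hilbertbundles}, together with standard Hilbert-module facts); consequently a path $t\mapsto G_{t}$ which is an operator homotopy — hence norm-continuous in $t$ uniformly in $x$ — is jointly strong-operator continuous in $(x,t)$, so it is a $\Cl^{p,q}$-Fredholm family on $H_{X\times\bI}$ with $G_{t}^{2}-1\in\Kom_{X\times\bI}$, i.e.\ a concordance of $(p,q)$-cycles from $(H_{X},\iota,c,F)$ to one with $G_{1}^{2}=1$, which is acyclic; the homotopy stays through invertibles over $Y$ since the degenerate structure there is preserved. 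Therefore $[H_{X},\iota,c,F]=0$ in $F^{p,q}_{0}(X,Y)$, as desired.

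I expect the main obstacle to be this third step: reconciling the $\KK$-equivalence relation — which allows arbitrary countably generated Hilbert modules, Kasparov stabilization, operator homotopy, and the addition of degenerate modules — with the concordance relation for $(p,q)$-cycles on Hilbert bundles. The crucial inputs are that the degenerate modules and the standard Hilbert module entering Kasparov stabilization can be realized by acyclic cycles on trivial ample Clifford bundles, and that over a compact base the adjointability and compactness conditions for Hilbert modules coincide with those for Hilbert bundles used in Section \ref{hilbertbundles}; the triviality of ample Clifford bundles over compact spaces (contractibility of $U_{p,q}$, or \cite{DD}) is what makes both of these work.
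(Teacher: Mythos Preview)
Your overall strategy is correct and uses the same two ingredients as the paper---Kasparov's stabilization theorem and the identification of degenerate Kasparov modules with acyclic $(p,q)$-cycles---but you take a more circuitous route.

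The paper does not first trivialize $V$ over $X$, nor does it invoke the (nontrivial) theorem that $\KK$-homotopy reduces to operator homotopy modulo degenerates. Instead it treats the nullhomotopy itself as a single Kasparov $\Cl^{q,p}$--$\gR_0([0,1]\times(X-Y))$-module $(E,\phi,G)$, adds to it the degenerate module $E'=\Gamma_0([0,1]\times(X-Y);H)$ with operator a fixed involution $J$, and applies Kasparov stabilization \emph{once} to $E\oplus E'$. This forces the underlying module to be the standard one over $[0,1]\times(X-Y)$, so the homotopy becomes, tautologically, a $(p,q)$-cycle on the trivial bundle over $[0,1]\times X$ that restricts at $t=0$ to $(V,F)\oplus(H_X,J)$ and at $t=1$ to something invertible---a concordance witnessing $[V,F]=0$.

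By contrast, your argument first reduces to a trivial bundle (harmless but unnecessary), then appeals to the operator-homotopy description of $\KK$, and then tries to arrange that the adjoined degenerate module is the standard one with a \emph{fixed} involution $F_W$. This last step is where your sketch is thinnest: the degenerate module furnished by the operator-homotopy theorem is arbitrary, and after stabilization the transported degenerate operator need not be your chosen $J$ but only \emph{some} involution-valued family over $X-Y$; you would still have to argue (via contractibility of $\cG^{p,q}_0(H)$, and with the right topology) that this is concordant rel $Y$ to the constant one. None of this is wrong, but the paper's single application of stabilization to the full homotopy module sidesteps all of it.
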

\begin{proof}
Let $[V,F] \in F^{p,q}_{0}(X,Y)$ such that $\gamma [V,F] =0 \in \KK (\Cl^{q,p};\gR_0(X-Y))$. By definition of the $\KK$-group, this means that the Kasparov module $(\Gamma_0 (X-Y; V), \rho, F)$ is homotopic to the zero module (in the sense of \cite[Definition 17.2.2]{Bla}). Let $(E, \phi,G)$ be a nullhomotopy, this is, by definition, a Kasparov $\Cl^{q,p}$-$\gR_0([0,1] \times (X-Y ))$-module which restricts to $0$ in $\gR_0(\{1\} \times (X-Y ))$ and to $(\Gamma_0 (X-Y; V), \rho, F)$ in $\gR_0(\{0\} \times (X-Y ))$.
Let $H$ be an ample $\Cl^{p,q}$-Hilbert space. Consider the graded Hilbert $\gR_0([0,1] \times (X-Y ))$-module $E':=\Gamma_0 ([0,1] \times (X-Y ); H_{[0,1] \times (X-Y)})$.
Let $\xi: \Cl^{q,p} \to \Lin_{\gR_0([0,1] \times (X-Y ))} (E')$ be the associated representation as constructed above. Any $J \in  \cG^{p,q}_{0}(H)$ induces an operator in $E'$. The Kasparov $\Cl^{q,p}$-$\gR_0([0,1] \times (X-Y ))$-module $(E', \xi, J)$ is degenerate, hence nullhomotopic by \cite[Proposition 17.2.3]{Bla} and so zero in the $\KK$-group. Form the direct sum $(E, \phi,G) \oplus (E', \xi, J)$ of the nullhomotopy with this degenerate module.
By Kasparov's stabilization theorem \cite[Theorem 15.4.6]{WO}, the Hilbert module $E \oplus E'$ is isomorphic to the space of continuous function $X-Y \to H$ vanishing at infinity. Therefore, $(E, \phi,G) \oplus (E', \xi, J)$ provides a concordance of $(V,F) \oplus (H_{X-Y},J)$ to an invertible Fredholm family. 
This shows that $[H,F] =0 \in F^{p,q}(X,Y)$.
\end{proof}

\subsection{Representability of the \texorpdfstring{$F^{p,q}$}--functors}

Here we construct the representing space for the functor $F^{p,q}$, following ideas of Atiyah and Segal \cite{Atiseg}.

\begin{definition}
Let $H$ be an ample $\Cl^{p,q}$-Hilbert space. We define $K^{p,q}$ as the space of pairs $(F,G)$ of bounded $\Cl^{p,q}$-Fredholm operators on $H$, with the condition that $FG-1$ and $GF-1$ are both compact. The topology on $K^{p,q}$ is induced from the injective map
\[
K^{p,q} \to \Lin(H)_{c.o.} \times \Lin(H)_{c.o.} \times \Kom (H) \times \Kom (H), \; (F,G) \mapsto (F,G,FG-1,GF-1).
\]
The subspace $D^{p,q} \subset K^{p,q}$ is the subspace of pairs $(F,G)$ such that $FG = GF = 1$.
\end{definition}

To understand the rationale of this definition, note that the possible parametrices to a Fredholm operator form a convex set, the bookkeeping of $G$ and $FG-1$, $GF-1$ is only there to guarantee the correct continuity condition.

If $(X,Y)$ is a space pair, $X$ compactly generated, and $F: (X,Y) \to (K^{p,q},D^{p,q})$, we get a Fredholm family $(F(x))_{x \in X}$ on the trivial Hilbert bundle $H_X \to X$. Therefore, we obtain a map
\begin{equation}\label{atiyah-singer-karoubi4}
[(X,Y); (K^{p,q},D^{p,q})] \to F^{p,q} (X,Y)
\end{equation}
and Theorem \ref{paracompact-comparison-theorem} asserts that (\ref{atiyah-singer-karoubi4}) is bijective whenever $(X,Y)$ is a paracompact pair.
For the proof, we use a trick that we propose to call ``Dixmier-Douady swindle'', because it first appears in \cite{DD}. The trick itself is stated as Lemma \ref{dixmiduedue} below. We also need a consequence that was drawn in \cite{DD}.

\begin{theorem}\label{absorbtion-lemma} \cite[Th\'eor\`eme 4]{DD}
Let $X$ be a paracompact space and let $V \to X$ be a $\Cl^{p,q}$-Hilbert bundle. Let $H$ be an ample $\Cl^{p,q}$-Hilbert space. Then there is an isomorphism $V \oplus H_X \cong H_X$ of $\Cl^{p,q}$-Hilbert bundles.
\end{theorem}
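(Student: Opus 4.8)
The plan is to exhibit $V$ as a direct summand of a \emph{trivial} $\Cl^{p,q}$-Hilbert bundle, and then to absorb the ample fibre by an Eilenberg-type swindle (the "Dixmier--Douady swindle", Lemma \ref{dixmiduedue}). Throughout, write $H_0$ for the fibre of $V$.

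\textbf{Step 1 (a countable trivializing cover and an embedding).} Using the generalities of \S\ref{clifford-survey}, pick a locally finite open cover $\{U_i\}_{i\in I}$ of $X$ over which $V$ is $\Cl^{p,q}$-trivial with fibre $H_0$, together with a subordinate partition of unity $\{\lambda_i\}$. For finite $S\subset I$ set $W_S:=\{x\in X:\lambda_i(x)>\lambda_j(x)\text{ for all }i\in S,\ j\notin S\}$. By the standard argument of Dold, each $W_S$ is open and lies in $U_i$ for every $i\in S$, the sets $W_S$ with $|S|=n$ are pairwise disjoint, and the $W_S$ cover $X$; hence $V_n:=\bigsqcup_{|S|=n}W_S$, $n\ge1$, is a \emph{countable} open cover over which $V$ is $\Cl^{p,q}$-trivial, $\psi_n:V|_{V_n}\cong V_n\times H_0$, with a subordinate partition of unity $\{\mu_n\}_{n\ge1}$. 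Define $\Phi:V\to\bigoplus_{n\ge1}(H_0)_X$, fibrewise over $x\in X$ by $v\mapsto(\sqrt{\mu_n(x)}\,\psi_n(v))_{n\ge1}$. Since $\sum_n\mu_n=1$ this is a fibrewise $\Cl^{p,q}$-linear isometry; it is adjointable, and because every Clifford generator and the grading act by unitaries, the projection $P:=\Phi\Phi^*$ onto its image is $\Cl^{p,q}$-linear and even. Thus $W:=\ker P$ is a $\Cl^{p,q}$-Hilbert subbundle and $V\oplus W\cong(\ell^2\otimes H_0)_X$.

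\textbf{Step 2 (swindle and absorption of the ample part).} Since $(\ell^2\otimes H_0)_X\cong\bigoplus_{n\ge1}(V\oplus W)$, regrouping the direct sum (the Dixmier--Douady swindle, Lemma \ref{dixmiduedue}) gives
\[
V\oplus(\ell^2\otimes H_0)_X\cong V\oplus\bigoplus_{n\ge1}(V\oplus W)\cong\bigoplus_{n\ge1}(V\oplus W)\cong\bigoplus_{n\ge1}(\ell^2\otimes H_0)_X\cong(\ell^2\otimes H_0)_X,
\]
the last step because $\bigoplus_n\ell^2\otimes H_0\cong\ell^2\otimes H_0$ as $\Cl^{p,q}$-Hilbert spaces. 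Finally $\ell^2\otimes H_0$ is separable, so $H\oplus(\ell^2\otimes H_0)$ is again ample and hence isomorphic to $H$ by uniqueness of ample $\Cl^{p,q}$-Hilbert spaces; applying this fibrewise and combining with the previous display,
\[
V\oplus H_X\cong\bigl(V\oplus(\ell^2\otimes H_0)_X\bigr)\oplus H_X\cong(\ell^2\otimes H_0)_X\oplus H_X\cong H_X.
\]

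I expect the only genuine obstacle to be Step 1: extracting the \emph{countable} trivializing cover from bare paracompactness via Dold's partition-of-unity trick, and then the routine but bookkeeping-heavy checks that $\Phi$ is adjointable and that $\ker P$ is an honest $\Cl^{p,q}$-Hilbert subbundle. Once these are in place, the swindle of Step 2 and the absorption $H\oplus(\ell^2\otimes H_0)\cong H$ are purely formal consequences of uniqueness of ample Clifford--Hilbert spaces.
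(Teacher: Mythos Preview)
Your argument is correct but takes a different route from the paper. The paper does not prove this theorem in detail: it cites \cite[Th\'eor\`eme~4]{DD} for the version without Clifford structures and then remarks that the upgrade to a $\Cl^{p,q}$-linear isomorphism follows from Lemma~\ref{kuipertheoremb} (transport the $\Cl^{p,q}$-structure of $V\oplus H_X$ through the real isometry of \cite{DD}, obtaining a second ample structure on the fibres of $H_X$, and straighten it over the paracompact base using the contractibility of $\cS^{p,q}(H)$ and the local sections of $U(H)\to\cS^{p,q}(H)$). You instead rerun the Dixmier--Douady construction with the Clifford structure carried along throughout: the local trivialisations $\psi_n$ are taken $\Cl^{p,q}$-linear, so $\Phi$ and $P=\Phi\Phi^*$ are automatically Clifford-linear and even, and no separate straightening step via Lemma~\ref{kuipertheoremb} is needed. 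This buys self-containedness; the paper's route buys brevity by outsourcing the analysis to the reference. One small correction: your Step~2 is the purely algebraic Eilenberg swindle (regrouping a countable Hilbert direct sum) and does not actually require Lemma~\ref{dixmiduedue}; that lemma records a specific continuous-in-$t$ family of projections on $L^2([0,1];N)$ and is used in the paper for a different purpose, namely the representability of $F^{p,q}$.
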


In \cite{DD}, only the existence of an isometry of real Hilbert bundles is proven. To upgrade this to $\Cl^{p,q}$-Hilbert bundles, one uses Lemma \ref{kuipertheoremb}.

Let us prove surjectivity of (\ref{atiyah-singer-karoubi4}). Let $(V,F)$ be a $(p,q)$-cycle. Let $(H,J)$ be an ample $\Cl^{p,q}$-Hilbert space and $J \in \cG^{p,q}_{0}(H)$. By Theorem \ref{absorbtion-lemma}, there is an isometry of $\Cl^{p,q}$-Hilbert bundles $V \oplus H_X \cong H_X$.
In the group $F^{p,q}(X,Y)$, the equation $[V,F] = [V \oplus H_X,F \oplus J]$ holds. But the right-hand side lies in the image of (\ref{atiyah-singer-karoubi4}).

For the injectivity, we explain the Dixmier-Douady swindle. Let $N_0$ be a $\Cl^{p,q}$-module which contains each irreducible one and form $N:=N_0 \oplus N^{op}_0$. Then $H:= L^2 ([0,1]; N)$ is an ample $\Cl^{p,q}$-Hilbert space.
Let $J =\twomatrix{}{1}{1}{}$, a $\Cl^{p,q}$-linear odd involution on $N \oplus N^{op}$. It induces a self-adjoint isometry $J: H \to H$.
The basis for the trick is the following result.

\begin{lemma}\label{dixmiduedue}\cite[Lemme 2]{DD}
Let $H_t \subset H$ be the subspace of all functions that are supported in $[0,t]$. Let $P_t: H \to H_t$ be the orthogonal projection. Then $t \mapsto P_t$ is a continuous map $[0,1] \to \Lin (H)_{c.o.}$. 
Moreover there exist isometries $Q_t: H_t \cong H$, for $t \in (0,1]$, such that $Q_1 =1$ and such that the maps $(0,1] \to \Lin(H)_{c.o.}$, $t \mapsto Q_{t}^{-1}$, $t \mapsto Q_t P_t$ are continuous.
Finally, the adjoint of $Q_t P_t$ is $Q_{t}^{-1}$.
\end{lemma}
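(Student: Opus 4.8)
The plan is to produce the isometries $Q_t$ by rescaling the interval. Identify $H=L^2([0,1];N)$ and $H_t=L^2([0,t];N)\subset H$ (functions vanishing almost everywhere outside $[0,t]$), so that $P_t$ is simply multiplication by the characteristic function $\chi_{[0,t]}$. For $t\in(0,1]$ set
\[
Q_t\colon H_t\longrightarrow H,\qquad (Q_tf)(x):=t^{1/2}f(tx)\quad(x\in[0,1]).
\]
The substitution $y=tx$ gives $\|Q_tf\|=\|f\|$, and $(Q_t^{-1}g)(y)=t^{-1/2}g(y/t)$ for $y\in[0,t]$ (and $0$ for $y>t$) exhibits the inverse; thus each $Q_t$ is an isometric isomorphism and $Q_1=\id$. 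Composing, $(Q_tP_tf)(x)=t^{1/2}f(tx)$ for $x\in[0,1]$.

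The first step is to reduce every continuity assertion to \emph{strong} operator continuity. All operators occurring, namely $P_t$, $Q_tP_t$ and $Q_t^{-1}$, have norm $\le 1$, and on a norm-bounded subset of $\Lin(H)$ the compact-open topology coincides with the strong operator topology: given a compact $K\subset H$ and $\eps>0$, cover $K$ by finitely many $\eps/3$-balls about $f_1,\dots,f_n$ and estimate $\|Tf-Sf\|\le 2\|f-f_i\|+\|Tf_i-Sf_i\|$ using $\|T\|,\|S\|\le 1$. Hence it suffices to show that $t\mapsto P_tf$, $t\mapsto Q_tP_tf$ and $t\mapsto Q_t^{-1}f$ are continuous for each fixed $f\in H$.

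For $P_t$ this is immediate from absolute continuity of the Lebesgue integral: $\|(P_t-P_s)f\|^2=\int_{\min(s,t)}^{\max(s,t)}|f(y)|^2\,dy\to0$. For the other two maps I would first verify strong continuity on the dense subspace $C([0,1];N)\subset H$: if $f$ is continuous, then as $t\to t_0\in(0,1]$ the integrands $|t^{1/2}f(tx)-t_0^{1/2}f(t_0x)|^2$ and $|t^{-1/2}f(y/t)\chi_{[0,t]}(y)-t_0^{-1/2}f(y/t_0)\chi_{[0,t_0]}(y)|^2$ tend to $0$ pointwise a.e.\ and are bounded by a fixed constant (on a neighbourhood $[\delta,1]$ of $t_0$ with $\delta>0$, using $\|f\|_\infty<\infty$), so the corresponding $L^2$-norms tend to $0$ by dominated convergence. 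The passage from $C([0,1];N)$ to arbitrary $f\in H$ is the routine $\eps/3$-argument, again using $\|Q_tP_t\|,\|Q_t^{-1}\|\le 1$.

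Finally, for the adjoint: regarding $P_t$ as the corestricted orthogonal projection $H\to H_t$, its adjoint is the inclusion $H_t\hookrightarrow H$, and since $Q_t\colon H_t\to H$ is unitary one has $Q_t^{*}=Q_t^{-1}$; therefore $(Q_tP_t)^{*}=P_t^{*}Q_t^{*}=Q_t^{-1}$ as an operator $H\to H$ with range $H_t$, which a direct check of $\langle Q_tP_tf,g\rangle=\langle f,Q_t^{-1}g\rangle$ via the same substitution confirms. This lemma is bookkeeping and presents no serious obstacle; the only point needing a little care is the strong continuity of $t\mapsto Q_t^{-1}$, because of the $t^{-1/2}$ factor, but this is harmless since the domain $(0,1]$ is bounded away from $0$. (It is not needed for the statement, but one may observe that each $Q_t$ is moreover $\Cl^{p,q}$-linear and even, since it acts only in the $[0,1]$-variable and leaves $N$ untouched.)
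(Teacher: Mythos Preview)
Your proof is correct and follows the same route as the paper, which does not prove the lemma in the text but cites \cite[Lemme 2]{DD} for the strong-operator version and \cite[p.~38]{Atiseg} for the upgrade to the compact-open topology; your construction of $Q_t$ by rescaling is precisely the Dixmier--Douady one, and your observation that the compact-open and strong operator topologies coincide on norm-bounded sets is exactly the content of the Atiyah--Segal remark invoked in the paper. One phrasing to tighten: ``the domain $(0,1]$ is bounded away from $0$'' should read ``a neighbourhood of $t_0$ in $(0,1]$ is bounded away from $0$'', which is what you actually use.
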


Actually, Dixmier and Douady prove this when $\Lin (H)_{c.o.}$ is replaced by $\Lin (H)_{stop}$, the space of bounded operators with the strong operator topology. By the remarks on page 38 \cite{Atiseg}, one gets continuity with target $\Lin (H)_{c.o.}$. Moreover, Dixmier and Douady do not mention Clifford algebras, but the necessary modifications are easy to do. The last statement is clear from the construction.
Using the functions from Lemma \ref{dixmiduedue}, we define maps 
\begin{equation}\label{dixdouswbou}
 (0,1] \times \Lin (H)_{c.o.} \to \Lin (H)_{c.o.}, \, (t,A) \mapsto J (1-P_t) + Q_{t}^{-1} A Q_t P_t
\end{equation}
and 
\begin{equation}\label{dixdouswcom}
 (0,1] \times \Kom (H) \to \Kom (H), \, (t,A) \mapsto  Q_{t}^{-1} A Q_t P_t.
\end{equation}
These maps preserve Fredholm operators, self-adjoint operators, even/odd operators and $\Cl^{p,q}$-linear operators.

\begin{lemma}
The maps (\ref{dixdouswbou}) and (\ref{dixdouswcom}) are continuous.
\end{lemma}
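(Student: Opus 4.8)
The plan is to reduce this to one elementary observation about the compact‑open topology: although composition $\Lin(H)_{c.o.}\times\Lin(H)_{c.o.}\to\Lin(H)_{c.o.}$ is \emph{not} jointly continuous, it becomes so as soon as one of the two factors is constrained to a norm‑bounded, compact‑open‑continuous family over a compact parameter interval. Call a family $t\mapsto S_t\in\Lin(H)_{c.o.}$, $t\in I$, \emph{tame} if $I\subset(0,1]$ is a compact interval, $t\mapsto S_t$ is continuous and $\sup_{t\in I}\norm{S_t}<\infty$. Since $P_t$ and $1-P_t$ are orthogonal projections and $Q_t$ is an isometry of $H_t$ onto $H$, each of $P_t$, $1-P_t$, $Q_tP_t$, $Q_t^{-1}$ has norm $\le 1$, so by Lemma~\ref{dixmiduedue} all four are tame on every such $I$; $J$ is a constant isometry.

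Next I would record two consequences of tameness. (i) If $t\mapsto S_t$ is tame and $K\subset H$ is compact, then $(t,x)\mapsto S_tx$ is continuous $I\times K\to H$ — from $S_{t'}x'-S_tx=S_{t'}(x'-x)+(S_{t'}-S_t)x$, the first term bounded by $(\sup_I\norm{S_t})\norm{x'-x}$ and the second tending to $0$ by pointwise continuity — hence $S_I(K):=\{S_tx:t\in I,\ x\in K\}$ is compact. (ii) Consequently, recalling that the compact‑open topology on $\Lin(H)$ is that of uniform convergence on compacta, for tame $t\mapsto S_t$ the maps $(t,B)\mapsto S_tB$ and $(t,A)\mapsto AS_t$, $I\times\Lin(H)_{c.o.}\to\Lin(H)_{c.o.}$, are continuous: near $(t_0,B_0)$ one splits $S_tBx-S_{t_0}B_0x=S_t(B-B_0)x+(S_t-S_{t_0})B_0x$ and bounds the first summand on a compact $K$ by $(\sup_I\norm{S_t})\sup_{K}\norm{(B-B_0)x}$ and the second by $\sup_{B_0K}\norm{(S_t-S_{t_0})y}$; near $(t_0,A_0)$ one splits $AS_tx-A_0S_{t_0}x=(A-A_0)S_tx+A_0(S_t-S_{t_0})x$ and bounds the first summand by $\sup_{S_I(K)}\norm{(A-A_0)y}$ (using (i)) and the second by $\norm{A_0}\sup_K\norm{(S_t-S_{t_0})x}$. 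For \eqref{dixdouswbou} this finishes the job: continuity is local in $t\in(0,1]$, so fixing $t_0$ and a compact interval $I\ni t_0$ inside $(0,1]$ we factor $h$ on $I\times\Lin(H)_{c.o.}$ as $(t,A)\mapsto(t,A(Q_tP_t))\mapsto Q_t^{-1}\!\bigl(A(Q_tP_t)\bigr)+J(1-P_t)$, a composition of right multiplication by the tame family $Q_tP_t$, left multiplication by the tame family $Q_t^{-1}$, the clearly continuous map $t\mapsto J(1-P_t)$, and addition in $\Lin(H)_{c.o.}$ — all continuous by (ii).

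The map \eqref{dixdouswcom} needs a separate argument because its target carries the \emph{norm} topology, and the key extra input is that compact operators absorb strong (equivalently, compact‑open) convergence on the right: if $\norm{R_n}\le C$, $R_n\to R$ in $\Lin(H)_{c.o.}$ and $C'\in\Kom(H)$, then $\norm{(R_n-R)C'}\to0$, because $\sup_n\norm{R_n-R}<\infty$ while $(R_n-R)\to0$ uniformly on the relatively compact set $C'(\text{unit ball})$. Both relevant spaces being metrizable, I would check sequential continuity: given $(t_n,A_n)\to(t_0,A_0)$ with $\norm{A_n-A_0}\to0$, estimate $\norm{Q_{t_n}^{-1}A_nQ_{t_n}P_{t_n}-Q_{t_0}^{-1}A_0Q_{t_0}P_{t_0}}$ by inserting intermediate terms. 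Replacing $A_n$ by $A_0$ costs at most $\norm{A_n-A_0}\to0$ (the outer factors have norm $\le1$); replacing $Q_{t_n}^{-1}$ by $Q_{t_0}^{-1}$ on the left costs at most $\norm{(Q_{t_n}^{-1}-Q_{t_0}^{-1})A_0}\to0$, since $A_0$ is compact; replacing $Q_{t_n}P_{t_n}$ by $Q_{t_0}P_{t_0}$ on the right costs at most $\norm{A_0(Q_{t_n}P_{t_n}-Q_{t_0}P_{t_0})}$, which, passing to adjoints and using $(Q_tP_t)^*=Q_t^{-1}$ from Lemma~\ref{dixmiduedue}, equals $\norm{(Q_{t_n}^{-1}-Q_{t_0}^{-1})A_0^*}\to0$ since $A_0^*$ is compact.

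The step I expect to require the most care is this last one: in the compact‑open (equivalently strong) topology neither one‑sided multiplication is continuous without a uniform norm bound on the free factor, and the twisting isometries $Q_t^{\pm1}$ do \emph{not} vary norm‑continuously in $t$; one must therefore trade strong continuity for norm continuity, which succeeds here only because $A$ and $A^*$ are compact and because $Q_tP_t$ and $Q_t^{-1}$ are mutually adjoint.
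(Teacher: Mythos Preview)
Your proof is correct. For the compact case \eqref{dixdouswcom} you follow essentially the same route as the paper: check sequential continuity by a telescoping estimate, and use that a strongly (equivalently compact-open) convergent, uniformly bounded sequence becomes norm-convergent after composing with a compact operator; the paper packages this as the single observation ``if $K_n\to K$ in norm, $A_n\to A$ and $A_n^*\to A^*$ in the compact-open topology, then $A_nK_nA_n^*\to AKA^*$ in norm'', while you unfold the same idea into three separate replacement steps and invoke $(Q_tP_t)^*=Q_t^{-1}$ explicitly.

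For \eqref{dixdouswbou} your argument is genuinely more elementary than the paper's. The paper factors $h$ through $(0,1]\times\bigl(\map((0,1];\Lin(H)_{c.o.})\bigr)^3\times\Lin(H)_{c.o.}$ and appeals to continuity of evaluation and of composition of mapping spaces (using that $H$ is compactly generated). You instead localise to a compact interval $I\subset(0,1]$, observe that the families $P_t$, $1-P_t$, $Q_tP_t$, $Q_t^{-1}$ are uniformly bounded and compact-open continuous there, and prove by direct $\epsilon$--estimates that one-sided multiplication by such a ``tame'' family is jointly continuous $I\times\Lin(H)_{c.o.}\to\Lin(H)_{c.o.}$. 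This avoids any reference to mapping spaces or compactly generated topologies and makes transparent exactly where the uniform norm bound is used; the paper's approach, by contrast, is shorter to state but leans on general categorical facts whose hypotheses one must still verify.
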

\begin{proof}
The map (\ref{dixdouswbou}) is the composition
\[
(0,1] \times \Lin (H)_{c.o.} \to (0,1] \times (\map((0,1]; \Lin (H)_{c.o.}))^3 \times \Lin (H)_{c.o.} 
\to \Lin (H)_{c.o.},
\]
the first map is $(t,A) \mapsto (t,P, Q^{-1}, Q P,A)$ and is obviously continuous. The second map is $(t,X,Y,Z,A) \mapsto J (1-X(t))+ Y(t) A Z(t)$, and this is continuous because the evaluation maps are continuous, because the composition maps between mapping spaces are continuous (and because $H$ is compactly generated), and because $\Lin (H)_{c.o.}$ is a topological vector space.

For the map (\ref{dixdouswcom}), it is enough to prove sequential continuity. Let $K_n \to K \in \Kom (H)$ (in norm), and $t_n \to t >0 $ in $(0,1]$. Let $P_n:= P_{t_n}$, $P:=P_t$ etc. By Lemma \ref{dixmiduedue}, $Q_n P_n \to Q P$ and $(Q_n P_n)^* = Q_{n}^{-1} \to Q^{-1} = (QP)^*$, both in the compact-open topology. Continuity of the map (\ref{dixdouswcom}) follows from the following general observation: Assume that $K_n \to K$ is a norm convergent sequence of compact operators and that $A_n$ and $A$ are bounded operators such that $A_n \to A$ and $A_{n}^{*} \to A^*$ in the compact-open topology. Then $A_n K_n A_{n}^{*} \to AKA^*$, in the norm topology. 

For the proof of this claim, we first observe that, by the Banach-Steinhaus theorem, $\norm{A_n} , \norm{A} \leq C$ for some $C$. So
\[
\norm{A_n K_n - AK } \leq \norm{A_n K_n - A_n K} + \norm{A_n K - A K} \leq C \norm{K_n-K} + \norm{A_n K - A K}.
\]
Since $A_n \to A$ in the compact-open topology and since $K$ is compact, $\norm{A_n K - AK} \to 0$. Abbreviate $B_n= A_n K_n$ and $B=AK$. We want to show that $B_n A_{n}^{*} \to BA^*$, in norm. But 
\[
\norm{B_n A_{n}^{*} - BA^*} = \norm{A_n B^*_n -AB^* }.
\]
By the first part of the argument, $B_{n}^{*} \to B^*$ in the norm topology. Again by the first part of the argument, it follows that $A_n K_n A_{n}^{*} \to AKA^*$ in the norm topology, as claimed.
\end{proof}

\begin{proposition}\label{dixdueprop}
Let $H$ be an ample $\Cl^{p,q}$-Hilbert space. Then there exists a decomposition $H= H_0 \oplus H_1$ into ample $\Cl^{p,q}$-Hilbert spaces, $J \in \cG^{p,q}_{0}(H_1)$ and a homotopy $R_t : [1/2,1] \times K^{p,q} \to K^{p,q}$ such that $R_1 = \id$ and such that for all $x \in K^{p,q}$, $R_{1/2} (x)$ is of the form $y \oplus J$, for some $y$. The homotopy preserves the subspace $D^{p,q}$.  
\end{proposition}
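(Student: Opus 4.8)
\emph{Proof proposal.} The plan is to reduce to the concrete model $H=L^2([0,1];N)$ of the ample $\Cl^{p,q}$-Hilbert space, with $N=N_0\oplus N^{op}_0$ as in the paragraph preceding Lemma \ref{dixmiduedue} — this is harmless, since any two ample $\Cl^{p,q}$-Hilbert spaces are isomorphic and the pair $(K^{p,q},D^{p,q})$ depends on $H$ only up to homeomorphism — and then to run the Dixmier--Douady swindle using the maps $h$ of \ref{dixdouswbou} and \ref{dixdouswcom}. For the decomposition I would take $H_0:=H_{1/2}$ (the sections supported in $[0,1/2]$) and $H_1:=H_0^{\perp}$ (the sections supported in $[1/2,1]$); since the $\Cl^{p,q}$-action and the grading act pointwise on $N$ they commute with the multiplication operator $P_{1/2}$, so this is an orthogonal decomposition into $\Cl^{p,q}$-Hilbert spaces, and each summand is again ample (each is $\Cl^{p,q}$-isometric to $L^2([0,1];N)$ by rescaling the interval). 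For $J$ I would take the restriction to $H_1$ of the odd, self-adjoint, $\Cl^{p,q}$-linear involution of $H$ induced by $\twomatrix{}{1}{1}{}$ on $N$; then $J\in\cG^{p,q}_{0}(H_1)$.

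Next I would define $R$ on $[1/2,1]\times K^{p,q}$ by applying $h$ coordinatewise to the four-tuple describing a point of $K^{p,q}$,
\[
R_t(F,G):=\bigl(h(t,F),\,h(t,G),\,h(t,FG-1),\,h(t,GF-1)\bigr),
\]
and the main point to verify is the identity $h(t,F)h(t,G)-1=h(t,FG-1)$ together with its mirror. Writing $S_t:=Q_tP_t$, Lemma \ref{dixmiduedue} gives $S_tS_t^{*}=1$, $S_t^{*}S_t=P_t$ and $S_t^{*}=Q_t^{-1}$, and $J$ commutes with $P_t$; expanding the product $h(t,F)h(t,G)=\bigl(J(1-P_t)+S_t^{*}FS_t\bigr)\bigl(J(1-P_t)+S_t^{*}GS_t\bigr)$, the diagonal terms give $(1-P_t)$ and $S_t^{*}FGS_t$ while the two cross terms vanish because $S_t$ kills the range of $1-P_t$ and $1-P_t$ kills the range of $S_t^{*}$. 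Subtracting $1=(1-P_t)+S_t^{*}S_t$ then yields $h(t,F)h(t,G)-1=S_t^{*}(FG-1)S_t=h(t,FG-1)$, which is compact; hence $R_t$ indeed lands in $K^{p,q}$, and the same computation shows $R_t$ preserves $D^{p,q}$ (if $FG=GF=1$ the correction terms are $h(t,0)=0$). Continuity of $R$ is precisely the content of the lemma preceding the proposition, and the fact that the new operators remain self-adjoint, odd, $\Cl^{p,q}$-linear and Fredholm is the remark following \ref{dixdouswcom}.

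Finally I would evaluate at the endpoints. At $t=1$ one has $P_1=1$ and $Q_1=1$, so $h(1,\,\cdot\,)=\id$ and $R_1=\id$. At $t=1/2$ the operator $1-P_{1/2}$ is the orthogonal projection onto $H_1$ and $S_{1/2}$ restricts to a $\Cl^{p,q}$-isomorphism $H_0\xrightarrow{\ \sim\ }H$ while annihilating $H_1$; hence in the block decomposition $H=H_0\oplus H_1$ the operator $h(1/2,F)$ equals $(Q_{1/2}^{-1}FQ_{1/2})\oplus J$, so $R_{1/2}(x)$ has the required form $y\oplus J$ with $y$ a pair of $\Cl^{p,q}$-Fredholm operators on $H_0$. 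I expect the only real labour to be the bookkeeping in the second paragraph: there is no further conceptual ingredient once Lemma \ref{dixmiduedue} is in hand, and the single point needing a word of care is that the isometries $Q_t$ must be chosen even and $\Cl^{p,q}$-linear, so that $h$ respects all of the algebraic and continuity conditions built into the topology of $K^{p,q}$.
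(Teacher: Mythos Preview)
Your proposal is correct and follows exactly the approach the paper intends: the paper's proof is the single sentence ``One uses the homotopies \ref{dixdouswbou} and \ref{dixdouswcom}, for $t \in [1/2,1]$,'' and what you have written is precisely the unpacking of that sentence, including the key algebraic identity $h(t,F)h(t,G)-1=h(t,FG-1)$ that makes $R_t$ land in $K^{p,q}$. Your caveat about needing $Q_t$ to be even and $\Cl^{p,q}$-linear is already acknowledged by the paper in the remark after Lemma \ref{dixmiduedue}.
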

\begin{proof}
One uses the homotopies (\ref{dixdouswbou}) and (\ref{dixdouswcom}), for $t \in [1/2,1]$. 
\end{proof}

\begin{proof}[Proof that the map (\ref{atiyah-singer-karoubi4}) is injective]
Let $F_0 , F_1: (X,Y) \to (K^{p,q},D^{p,q})$ be two maps that induce the same element in $F^{p,q}(X,Y)$. By the definition of $F^{p,q}(X,Y)$ and by Theorem \ref{absorbtion-lemma}, this means that $R_{1/2} \circ F_0$ and $R_{1/2} \circ F_1$ are homotopic. Since $R_{1/2} \circ F_i$ and $F_i$ are homotopic by Proposition \ref{dixdueprop}, $F_0 \sim F_1$. The homotopy preserves the subspace $D^{p,q}$, and the proof is complete.
\end{proof}

\begin{proposition}
The space $D^{p,q}$ is weakly contractible.
\end{proposition}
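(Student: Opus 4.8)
The plan is to prove the stronger statement that $D^{p,q}$ is \emph{contractible}, by recycling the Dixmier--Douady homotopy $h$ of \eqref{dixdouswbou} and letting its scaling parameter run all the way down to $0$. Recall that a point of $D^{p,q}$ is a pair $(F,G)$ of $\Cl^{p,q}$-Fredholm operators with $FG=GF=1$; equivalently, $F$ is an invertible $\Cl^{p,q}$-linear, odd, self-adjoint operator on the ample model $H=L^2([0,1];N)$ and $G=F^{-1}$, the topology being that of $\Lin(H)_{c.o.}\times\Lin(H)_{c.o.}$ in the variables $(F,G)$.

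The first step is to record the block form of $h$ on invertible operators. Since $J$ commutes with the multiplication operator $P_t$, the operator $h(t,A)=J(1-P_t)+Q_t^{-1}AQ_tP_t$ is, with respect to the decomposition $H=H_t\oplus H_t^{\perp}$, block-diagonal:
\[
h(t,A)=\bigl(Q_t^{-1}AQ_t\bigr)\oplus\bigl(J|_{H_t^{\perp}}\bigr).
\]
For $A$ invertible both blocks are invertible ($Q_t\colon H_t\cong H$ is an isometric isomorphism, and $J|_{H_t^{\perp}}$ is an involution), so $h(t,A)$ is invertible with inverse $Q_t^{-1}A^{-1}Q_t\oplus J|_{H_t^{\perp}}=h(t,A^{-1})$. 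As the text already records, $h$ preserves self-adjointness, oddness and $\Cl^{p,q}$-linearity. Hence $(t,(A,A^{-1}))\mapsto\bigl(h(t,A),h(t,A^{-1})\bigr)$ is a well-defined homotopy $(0,1]\times D^{p,q}\to D^{p,q}$, continuous by the continuity statement for \eqref{dixdouswbou}, and $h(1,A)=A$, so at $t=1$ it is the identity of $D^{p,q}$.

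It remains to understand the limit $t\to 0$. For each $x\in H$ one has $P_tx\to 0$, hence $J(1-P_t)x\to Jx$ and $\norm{Q_t^{-1}AQ_tP_tx}=\norm{AQ_tP_tx}\le\norm{A}\,\norm{P_tx}\to 0$, so that $h(t,A)\to J$ in the strong operator topology, uniformly over $A$ in any norm-bounded set. A compact-open convergent net of operators is uniformly bounded by Banach--Steinhaus, and on uniformly bounded sets the compact-open and strong topologies coincide; therefore setting $h(0,\,\cdot\,):=J$ extends $h$ continuously to $[0,1]\times\Lin(H)_{c.o.}$, and correspondingly extends the homotopy above to a continuous map $[0,1]\times D^{p,q}\to D^{p,q}$ which at $t=0$ is the constant map at $(J,J)\in D^{p,q}$. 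Thus $D^{p,q}$ deformation retracts onto the point $(J,J)$; in particular it is weakly contractible.

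The only genuinely delicate point is the last paragraph: one must check that the pointwise limit of $h$ as $t\to 0$ is continuous for the compact-open topology, not merely for the strong topology, which is exactly where the uniform boundedness of compact-open convergent families — already exploited in proving that \eqref{dixdouswbou} is continuous — enters. Everything else is bookkeeping with the decomposition $H=H_t\oplus H_t^{\perp}$ and with inverses.
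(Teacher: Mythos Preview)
Your argument contains a genuine gap at the extension to $t=0$. The assertion that ``a compact-open convergent net of operators is uniformly bounded by Banach--Steinhaus'' is false: the uniform boundedness principle requires the family to be pointwise bounded, and while this is automatic for convergent \emph{sequences} (a convergent sequence in a normed space is bounded), it fails for convergent \emph{nets}. For any strong neighbourhood $V$ of $0$ in $\Lin(H)$ one can choose $T_V\in V$ with $\norm{T_V}$ as large as one pleases, and indexing over the neighbourhood filter (crossed with $\bN$) yields a net converging strongly to $0$ with unbounded norm. Since $\Lin(H)_{c.o.}$ is not first countable, sequential continuity does not suffice; and $D^{p,q}$ itself contains operators of arbitrarily large norm (for instance $(\lambda J,\lambda^{-1}J)$ for $\lambda>0$). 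Without a uniform norm bound on $A$ near $A_0$, your estimate $\norm{h(t,A)x-Jx}\le(1+\norm{A})\,\norm{P_tx}$ does not give continuity of the extended map at $(0,A_0)$. (Your parenthetical remark that this boundedness was ``already exploited in proving that \eqref{dixdouswbou} is continuous'' is also not accurate: that proof uses only continuity of composition and evaluation; Banach--Steinhaus appears in the proof for \eqref{dixdouswcom}, and there only for sequences in the metrizable space $\Kom(H)$.)

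This is precisely why the paper proceeds in two steps. It first restricts to the subspace $E^{p,q}\subset D^{p,q}$ of \emph{orthogonal} elements, i.e.\ those $(F,F^{-1})$ with $F^2=1$; there $\norm{F}=1$ identically, your estimate becomes $\norm{h(t,A)x-Jx}\le 2\norm{P_tx}$ uniformly in $A$, and the Dixmier--Douady swindle extends continuously to $t=0$, contracting $E^{p,q}$ as in \cite[Proposition~A2.1]{Atiseg}. The second step shows that the inclusion $E^{p,q}\hookrightarrow D^{p,q}$ is a weak homotopy equivalence by a spectral deformation: for any continuous $F\colon X\to D^{p,q}$ with $X$ compact, the image of the inverse family $G=F^{-1}$ is compact in $\Lin(H)_{c.o.}$, hence pointwise and (by Banach--Steinhaus, now legitimately) norm-bounded, giving a uniform $\epsilon>0$ with $F(x)^2\ge\epsilon^2$; then $(s,x)\mapsto(1-s)F(x)+s\,f(F(x))$ with $f$ odd and $f\equiv\sign$ outside $(-\epsilon,\epsilon)$ homotopes $F$ into $E^{p,q}$. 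The norm control you need is thus produced \emph{a posteriori}, via functional calculus on compact families, rather than extracted from the topology of $D^{p,q}$.
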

\begin{proof}
Inside $D^{p,q}$, there is the subspace $E^{p,q}$ of orthogonal elements. That $E^{p,q}$ is contractible is proven using the Dixmier-Douady swindle, in the same way as \cite[Proposition A2.1]{Atiseg}.
It remains to prove that $E^{p,q} \to D^{p,q}$ is a weak homotopy equivalence. Let $X$ be a compact space and $F: X \to D^{p,q}$ a map (there is no need to consider a relative map). The family $F: H_X \to H_X$, $(x,v) \mapsto (x, F(x)v)$ is an invertible homomorphism. There exists $\epsilon >0$ such that $F(x)^2 \geq \epsilon^2$. Let $f: \bR \to \bR$ be an odd continuous function such that $f(t)=1$ for $|t| \geq \epsilon$. The family over $I \times X$, $(s,x) \mapsto (1-s) F(x) + s f(F(x))$ is a homotopy from $F$ to a map into $E^{p,q}$.
\end{proof}

\section{A fiber theorem for spaces of units in \texorpdfstring{$C^*$}--algebras}

Let $A$ and $B$ be unital $\bZ/2$-graded $\cstar$-algebras and $\pi:A \to B$ be a graded surjective $*$-algebra homomorphism. 
Let $A_{sa,odd}$ and $ B_{sa,odd}$ be the subspaces of odd self-adjoint elements. Let $C \subset B_{sa,odd}$ be the (open) subset of invertible elements, $F:= \pi^{-1} (C) \cap A_{sa,odd}$ and $G \subset F$ be the subspace of invertible elements.

\begin{proposition}\label{fibretheorem}
The restriction of $\pi$ to $F \to C$ and to $G \to C$ are Serre fibrations, and the first one even admits a global section.
\end{proposition}

If we ignore the conditions (self-adjoint, odd), this is a well-known result (see \cite[Theorem 11]{Pal2}). However, we do not see how to derive it from the classical one directly, and therefore we prove Proposition \ref{fibretheorem} from scratch.

One example where this result can be applied is when $H$ is a $\Cl^{p,q}$-Hilbert space, and $A$ is the algebra of all Clifford-linear bounded operators on $H$ (no condition on the grading). The grading of $A$ is by even/odd operators. In that case, we let $B$ be the Calkin algebra, i.e. the quotient of $A$ by the ideal of compact operators.

\begin{lemma}\label{fibrelemma}
Let $y: I \to C$ be a continuous path and $x_0 \in G$ with $\pi (x_0)=y(0)$. Then there exists a path $x: I \to G$ with $x(0)=x_0$ and $\pi \circ x=y$.
\end{lemma}
\begin{proof}
It is a theorem by Bartle and Graves that if $\phi:X \to Y$ is a surjective bounded operator between Banach spaces, then there is a continuous cross-section $\sigma:Y \to X$ (\cite{BartGrav}, see \cite[p. 187]{Holm} for an explicit statement). 
Choose a section $\sigma: B_{sa,odd} \to A_{sa,odd}$ with $\sigma (0)=0$. 

Since $x_0$ and $y(t)$ are invertible, there exists $\epsilon>0$ such that
$[-2\epsilon, 2\epsilon] \cap \spec_B (y(t))= [-2\epsilon, 2\epsilon] \cap \spec_A (x_0)=\emptyset$ for all $t \in I$. Choose $\delta>0$ with $\norm{\sigma (z)} < \epsilon$ for all $\norm{z}< \delta$. Since $y$ is uniformly continuous, there is $r \in \bN$ such that $|t-t'| \leq \frac{1}{r}$ implies $\norm{y(t)-y(t')} < \delta$.
Choose an odd continuous function $f: \bR \to \bR$ such that $f (s)=s$ for $s \geq 2\epsilon$ and $f(s)=2 \epsilon$ for $\epsilon \leq s \leq 2 \epsilon$. 
We wish to construct the lift $x$ in such a way that $\spec_A (x(t)) \cap [-2\epsilon, 2\epsilon] = \emptyset$ and we do this inductively on the intervals $[0, \frac{i}{r}]$, $i=0, \ldots, r$. By the choice of $\epsilon$, the given initial value $x_0$ has this property. Now suppose such a lift has been constructed on the interval $[0, \frac{i}{r}]$, for some $i \geq 0$.
For $t \in [\frac{i}{r}, \frac{i+1}{r}]$, define
\[
\tilde{x} (t):= x(\frac{i}{r}) + \sigma (y(t)-y(\frac{i}{r})).
\]
It is clear that $\pi ( \tilde{x}(t))=y(t)$ and because $\sigma (0)=0$, it is also clear that $\tilde{x}(\frac{i}{r})= x(\frac{i}{r})$.
Since $\norm{\sigma (y(t)-y(\frac{i}{r}))} < \epsilon$, it follows that $\spec_A (\tilde{x}(t)) \cap [-\epsilon, \epsilon] = \emptyset$. By the choice of the function $f$, $f(\tilde{x}(t))$ has spectrum outside $[-2\epsilon, 2\epsilon]$, and is in $A_{sa,odd}$, because $f$ is an odd function. Also, the choice of $f$ implies that $f(\tilde{x}(\frac{i}{r}))= x(\frac{i}{r})$. Set $x(t):=f(\tilde{x}(t))$. It remains to prove that $\pi(x(t))=y(t)$. The function $h(s):= f(s)-s$ has support in $[-2\epsilon,2\epsilon]$. But
\[
\pi(x(t)) = \pi(x(t)-\tilde{x}(t)) + \pi (\tilde{x}(t)) =  \pi(x(t)-\tilde{x}(t)) +y(t)=  \pi (h(\tilde{x}(t)))+y(t).
\] 
Since the functional calculus commutes with algebra homomorphisms, we conclude that 
\[
\pi (h(\tilde{x}(t))) =  h (\pi(\tilde{x}(t))) = h(y(t)) =0; 
\]
the last equality holds because the support of $h$ is disjoint from the spectrum of $y(t)$.
\end{proof}

\begin{proof}[Proof of Proposition \ref{fibretheorem}]
The case of $F \to C$ is covered by \cite[Theorem 10]{Pal2} (or follows quickly from the Bartle-Graves theorem). The other case follows from Lemma \ref{fibrelemma}. Let $X$ be a finite CW complex and consider the $C^*$-algebras $C^0 (X; A)$ and $C^0 (X;B)$. The induced homomorphism $\pi _*: C^0 (X;A) \to C^0 (X;B)$ is surjective (this follows from the Bartle-Graves theorem). Lemma \ref{fibrelemma}, applied to $\pi_*$, shows that the map $C^0 (X;G) \to C^0 (X;C)$ has the path-lifting property. But any lifting problem
\[
\xymatrix{
X \times \{0\} \ar[r]^{g} \ar[d] & G  \ar[d]^{\pi}\\
X \times [0,1] \ar[r]^{f} \ar@{..>}[ur]  & C
}
\]
is equivalent to a path-lifting problem for $\pi_*$.
\end{proof}

\bibliographystyle{plain}
\bibliography{spectralflowbott}

\end{document}